\tikzstyle{vertex}=[circle, draw, inner sep=0pt, minimum size=6pt]
\newtheorem{theorem}{Theorem}[section]
\patchcmd{\ttlh@hang}{\parindent\z@}{\parindent\z@\leavevmode}{}{}
\patchcmd{\ttlh@hang}{\noindent}{}{}{}
\titleformat*{\section}{\large\bfseries}
\titleformat*{\subsection}{\small\bfseries}
\titleformat*{\subsubsection}{\small\bfseries}
\titleformat*{\paragraph}{\small\bfseries}
\titleformat*{\subparagraph}{\small\bfseries}
\newcommand{\N}{\mathbb{N}}
\newcommand{\R}{\mathbb{R}}
\newcommand{\Z}{\mathbb{Z}}
\newcommand{\E}{\mathbb{E}}
\renewcommand{\H}{\mathbb{H}}
\newcommand{\p}{\mathbb{P}}
\newcommand{\eps}{\varepsilon}
\newcommand{\cA}{\mathcal{A}}
\newcommand{\cD}{\mathcal{D}}
\newcommand{\cF}{\mathcal{F}}
\newcommand{\cJ}{\mathcal{J}}
\newtheorem{lemma}[theorem]{Lemma}
\newtheorem{remark}[theorem]{Remark}
\newtheorem{proposition}[theorem]{Proposition}
\newtheorem{claim}[theorem]{Claim}
\newtheorem{observation}[theorem]{Observation}
\theoremstyle{remark}
\newtheorem{definition}[theorem]{Definition}
\begin{document}

	\title{A discontinuous percolation phase transition on the hierarchical lattice}
	
	\author{Johannes B\"aumler\thanks{Department of Mathematics, University of California, Los Angeles; \url{jbaeumler@math.ucla.edu}}
    \and
    Tom Hutchcroft\thanks{The Division of Physics, Mathematics and Astronomy, California Institute of Technology; \\  \url{t.hutchcroft@caltech.edu}}
    }

	\maketitle
	
	\begin{center}
		\parbox{13cm}{ \textbf{Abstract.} 
        For long-range percolation on $\Z$ with translation-invariant edge kernel $J$, it is a classical theorem of Aizenman and Newman (1986) that the phase transition is discontinuous when $J(x,y)$ is of order $|x-y|^{-2}$ and that there is no phase transition at all when $J(x,y)=o(|x-y|^{-2})$. We prove a strengthened version of this theorem for the hierarchical lattice, where the relevant threshold is at $|x-y|^{-2d} \log\log |x-y|$ rather than $|x-y|^{-2}$: There is a continuous phase transition for kernels of larger order, a discontinuous phase transition for kernels of exactly this order, and no phase transition at all for kernels of smaller order. As such, $|x-y|^{-2d} \log\log |x-y|$ is essentially the \emph{only} kernel that produces a discontinuous phase transition.  We also prove a hierarchical analogue of the ``$M^2\beta=1$'' conjecture of Imbrie and Newman (1988), which gives an exact formula for the density of the infinite cluster at the point of discontinuous phase transition and remains open in the Euclidean setting.
	}
	\end{center}
	
	\vspace{0.1cm}

\setstretch{1.1}

\section{Introduction}\label{sec:introduction}

Let $J:\Z\times \Z \to \left[0,\infty \right)$ be a symmetric, translation-invariant kernel, meaning that $J(x,y)=J(y,x)=J(0,y-x)$ for all $x,y\in \Z$, and suppose that $J$ is integrable in the sense that $\sum_{x\in \Z} J(0,x) < \infty$. Consider long-range percolation on $\Z$, where for any pair of distinct vertices $x,y\in \Z$ the edge $\{x,y\}$ is open with probability $1-\exp(\lambda J(x,y))$, independent of all other edges, where $\lambda \geq 0$ is a parameter. We denote the resulting measure by $\p_{\lambda}=\p_{\lambda,J}$. 
When the function $J$ is of the form $J(x
,y)=\|x-y\|^{-1-\alpha}$ with $\alpha>0$, the associated long-range percolation model has a non-trivial phase transition in the sense that $\lambda_c=\inf\{\lambda \geq 0:$ an infinite cluster exists $\p_{\lambda}$-a.s.$\}$ satisfies $0<\lambda_c<\infty$ if and only if $\alpha \in (0,1]$ \cite{newman1986one,duminil2020long}.
The boundary case $\alpha=1$ has the special feature that the model has a \emph{discontinuous phase transition} \cite{MR868738,duminil2020long}, meaning that there exists an infinite cluster almost surely at the critical point $\lambda=\lambda_c$, whereas for $\alpha<1$ the phase transition is \emph{continuous}  \cite{MR1896880,hutchcroft2020power}, meaning that there do not exist any infinite clusters at the critical point $\lambda=\lambda_c$ almost surely. (Moreover, the critical behaviour of the model with $d=1$ and $\alpha\in (0,1)$ is now very well understood following the works \cite{hutchcroft2022sharp,baumler2022isoperimetric,LRPpaper1,LRPpaper2,LRPpaper3} as surveyed in \cite{hutchcroft2025dimension}.) In addition to possessing a discontinuous phase transition, the kernel $J(x,y)=|x-y|^{-1-\alpha}$ on $\Z$ with $\alpha=1$ is also distinguished by its approximate \emph{self-similarity} (the Poisson process of edges on $\R \times \R \setminus \{(x,x):x\in \R\}$ with intensity $\lambda |x-y|^{-2}$ being exactly invariant under rescaling), which also leads to the infinite \emph{supercritical} cluster having a rich, fractal-like geometry as studied in e.g.\ \cite{ding2023uniqueness,baumler2023distances,ding2025uniqueness}.

The discontinuity of the phase transition for the self-similar one-dimensional long-range percolation (i.e., the model with $d=\alpha=1$) is proven by establishing \cite[Proposition 4.1]{newman1986one} more generally for any translation-invariant kernel $J:\Z\times \Z\to [0,\infty)$ that
\begin{equation}
\label{eq:M2beta_gap}
\p_{\lambda,J}(0\leftrightarrow\infty)^2 \left(\limsup_{|x-y|\to \infty} \lambda |x-y|^{2}J(x,y)\right) \in \{0\} \cup [1,\infty],
\end{equation}
yielding an explicit lower bound of $(\limsup_{|x-y|\to \infty} \lambda_c|x-y|^2 J(x,y))^{-1/2}$ on the density of the infinite cluster at the point of the discontinuous phase transition. It was conjectured by Imbrie and Newman \cite{imbrie1988intermediate} that this lower bound is in fact an \emph{equality}, so that
\begin{equation}
\p_{\lambda_c,J}(0\leftrightarrow\infty) = \left(\lim_{|x-y|\to \infty} \lambda_c |x-y|^2 J(x,y)\right)^{-1/2},
\label{eq:ImbrieNewman}
\end{equation}
under the assumption that the relevant limit is well-defined and in $(0,\infty)$. That is, the Imbrie--Newman conjecture yields a simple exact formula for the density of the infinite percolation cluster for self-similar long-range percolation on $\Z$ at the point of discontinuous phase transition as a function of the asymptotic intensity of long edges. This conjecture remains open to this day, nearly four decades after it was posed.

\medskip

\noindent \textbf{A discontinuous phase transition in hierarchical percolation.}
The goal of this paper is to explore analogous phenomena in \emph{hierarchical percolation}, for which we will show that a much more comprehensive analysis is possible.
Broadly speaking, hierarchical models 
are toy models of statistical mechanics introduced by Dyson \cite{MR436850} and Baker~\cite{baker1972ising} that have often served as a testing ground to develop techniques and build insights that are later adapted to Euclidean models \cite{bleher2010critical,abdesselam2013rigorous,MR3969983}, as has recently been done for long-range percolation in the works \cite{hutchcroft2022sharp,hutchcroft2022critical,LRPpaper1,LRPpaper2,LRPpaper3,baumler2022isoperimetric,hutchcrofthierarchical}. Hierarchical percolation is defined exactly as we defined long-range percolation on $\Z$ except that the Euclidean metric on $\Z$ is replaced by a \emph{hierarchical ultrametric} in which we decompose the lattice hierarchically into $L$-adic boxes and (by abuse of notation) set $\|x-y\|$ (which is not really a norm or a function of $x-y$)  to be the side length of the smallest box in the decomposition containing both $x$ and $y$. (See \Cref{subsec:definitions} for other equivalent definitions and further relevant background on the model.)

In hierarchical percolation with kernel $J(x,y)=\|x-y\|^{-d-\alpha}$, it is known that the phase transition is continuous when $\alpha<d$ \cite{MR2955049} (in which case the critical behaviour is also very thoroughly understood \cite{hutchcroft2022critical,hutchcrofthierarchical}) and that there is no phase transition at all when $\alpha \geq d$ \cite{MR2955049,MR3035740}. 
We show that one can recover a discontinuous phase transition similar to the one observed on $\Z$ by instead using a kernel of order
\[
J(x,y) \asymp \frac{\log\log \|x-y\|}{\|x-y\|^{2d}}.
\]
Moreover, we show that (subject to mild regularity assumptions) this is the \emph{only} form of the kernel in which a discontinuous phase transition takes place: More concretely, we show that if the kernel is divergently smaller than $\|x-y\|^{-2d}\log\log \|x-y\|$ then there is no phase transition, while if it is divergently larger than $\|x-y\|^{-2d}\log\log \|x-y\|$ then the phase transition is continuous. We conjecture that the analogous statement is also true on $\Z$ with the critical kernel being $|x-y|^{-2}$; the only part of this conjecture that remains open following the results of \cite{MR868738,duminil2020long} is to show that the phase transition is continuous under the minimal assumption that $J(x,y) = \omega(|x-y|^{-2})$.

Finally, we are also able to prove a hierarchical analogue of the  Imbrie--Newman conjecture \eqref{eq:ImbrieNewman},  yielding an exact formula for the density of the infinite cluster at the point of discontinuous phase transition; this is the most technical part of the paper and relies on a much more delicate analysis than the proofs of (dis)continuity. 

 \medskip
 
 Precise statements of our theorems are given in \Cref{subsec:theorems} following the detailed definition of the hierarchical model in \Cref{subsec:definitions}.

\subsection{Definition of the model}
\label{subsec:definitions}

We now formally define the class of models we will consider.
It will be convenient to work with a different (but equivalent) definition of the hierarchical lattice than that given above; we refer to 
\cite[Section 2.1]{hutchcroft2022sharp} for a detailed explanation of how the two definitions lead to isometric metric spaces.
For a dimension $d\in \N$ and side-length $L\in \{2,3,4,\ldots\}$, the \textbf{hierarchical lattice} $\H_L^d$ is defined to be the abelian group $\bigoplus_{i=1}^\infty \left(\Z / L\Z \right)^d$ equipped with the ultrametric given by $\|x-y\| \coloneqq L^{\max \{i:x_i \neq y_i\}}$ for all distinct $x,y\in \H_L^d$. (This metric is not a norm, but we use this notation to emphasize its analogy with the metrics on $\Z^d$ induced by norms on $\R^d$.) The ultrametric balls of radius $L^n$ in this space are referred to as $n$-\textbf{blocks}, with the $n$-block containing the origin denoted by $\Lambda_n$ and the $n$-block containing the point $x$ denoted by $\Lambda_n(x)$. As a metric space, $\H_L^d$
can also be constructed recursively by taking $\Lambda_0=\{0\}$ and, for each $n\geq 0$, taking $\Lambda_{n+1}$ to be the union of $L^d$ disjoint copies of $\Lambda_n$ with distances defined by $\|x-y\| = L^{n+1}$ for each pair $x,y\in \Lambda_{n+1}$ belonging to distinct copies of $\Lambda_{n}$. 

We now define long-range percolation on the hierarchical lattice. It will be convenient to use a slightly different (and more general) formulation of the model than in the opening paragraphs of the paper, in which we vary the kernel $J$ itself rather than varying a constant prefactor $\lambda$.
We think of the hierarchical lattice as a weighted graph with vertex set given by the group $V=\bigoplus_{i=1}^\infty \left(\Z / L\Z \right)^d$ and edge set $E=\{\{x,y\}:x,y\in V, x\neq y\}$. Given an edge $e=\{x,y\}$, we also write $|e|=\|x-y\|$ for its length. Let $J:E\to \left[0,\infty\right)$ be a kernel. We define percolation on $\H_L^d$ with kernel $J$ to be the random graph with vertex set $\H_L^d$ in which each potential edge $e$ is open with probability $1-\exp\left(-J(e)\right)$, independent of all other edges. The law of the resulting random graph is denoted $\p_J$. We are mostly interested in the case where kernels are \textbf{isometry-invariant}, meaning that $J(e)=J(f)$ for all $e,f\in E$ with $|e|=|f|$, \textbf{integrable}, meaning that $\sum_{e : 0 \in e} J(e) < \infty$, and \textbf{regular}, meaning that there exists a constant $C$ such that 
\begin{equation*}
	\sum_{x \notin \Lambda_n} J(0,x) \leq C \sum_{x \in \Lambda_{n+1}\setminus\Lambda_{n}} J(0,x)
\end{equation*}
for all large enough $n \in \N$, where we write $J(x,y)=\mathbbm{1}(x\neq y)J(\{x,y\})$ for each pair of vertices $x,y\in \mathbb{H}^d_L$. The integrability condition guarantees that the long-range percolation configuration is locally finite almost surely. 

We will also  consider parametrized families of kernels $\cJ:\left[0,\infty\right) \times E \to \left[0,\infty\right)$. Again, for $\lambda \in \left[0,\infty\right)$, we say that an edge $e$ is open with probability $1-\exp\left(-\cJ(\lambda,e)\right)$, independent of all other edges. We write $\p_{\cJ(\lambda)}$ for the resulting measure. If it is clear which family of kernels is considered, we also write $\p_{\lambda}$ for $\p_{\cJ(\lambda)}$. 
In this paper, we only consider the setting in which the kernels $\cJ(\lambda,\cdot)$ are isometry-invariant, integrable, and regular for all $\lambda \in \left(0,\infty\right)$.
In addition, we will also require that the following further properties hold regarding the dependence of $\mathcal{J}(\lambda)$ on the parameter $\lambda$: We say that a family of kernels $\cJ:\left[0,\infty\right) \times E \to \left[0,\infty\right)$ is \textbf{monotone} if $\cJ(\lambda,e)$ is (weakly) increasing in $\lambda$ for all $e\in E$, \textbf{continuous} if for each $\lambda^\star>0$ the functions
\begin{equation}\label{proper:conti}
    \lambda \mapsto \sup_{e\in E} \frac{\cJ(\lambda,e)}{\cJ(\lambda^\star,e)} \ \text{ and } \ \lambda \mapsto \sup_{e\in E} \frac{\cJ(\lambda^\star,e)}{\cJ(\lambda,e)}
\end{equation}
are finite-valued, continuous functions of $\lambda \in (0,\infty)$, and \textbf{non-percolative at $0$} if $\sum_{e : 0\in e} \cJ(\lambda,e)<1$ for all sufficiently small $\lambda \geq 0$.
The continuity condition guarantees that the kernels $\cJ(\lambda,\cdot)$ and $\cJ(\lambda+\eps,\cdot)$ behave similarly for $|\eps|$ small, while the monotonicity condition guarantees that $\p_{\cJ(\lambda)}(e \text{ is open})$ is non-decreasing in $\lambda$. In particular, the monotonicity assumption guarantees that we can couple the measures $(\p_{\cJ(\lambda)})_{\lambda \geq 0}$ using the standard Harris coupling so that we only add edges to the graph when increasing $\lambda$. Finally, the ``non-percolative at $0$'' condition ensures,  by comparison with a subcritical branching process, that for all sufficiently small $\lambda$, there are only finite open clusters under the measure $\p_\lambda$ almost surely.  
We say that a family of kernels $\cJ$ is \textbf{proper} if the family of kernels is monotone, continuous, non-percolative at $0$, and if for every $\lambda > 0$ the kernel $\cJ(\lambda,\cdot)$ is isometry-invariant, integrable, and regular.

Let us discuss two natural examples of families of kernels that satisfy these assumptions.  First, when $\cJ$ is of the form $\cJ(\lambda,e)= \lambda J(e)$ for some kernel $J$, then it is proper whenever $J$ is isometry-invariant, integrable, and regular. Increasing $\lambda$ for this kernel increases the probability $\p_\lambda(e \text{ open})$ for all edges $e$ with $J(e)>0$. Given an isometry-invariant, integrable, regular kernel $J$, another example of a family of kernels that satisfies our assumptions is given by
\begin{equation*}
	\cJ(\lambda,e) = \begin{cases}
	J(e) & \text{ if } |e|>N \\
	\lambda & \text{ if } |e|\leq N
	\end{cases}
\end{equation*}
where $N\in \N$ is a fixed cut-off value large enough that the ``non-percolative at $0$'' condition  is satisfied. For this family of kernels, an increase in $\lambda$ increases the probability $\p_\lambda(e \text{ open})$ only for edges $e$ with length at most $N$.

\subsection{Statement of main theorems}
\label{subsec:theorems}

We now state our main theorems. We begin with our classification of hierarchical kernels producing continuous and discontinuous phase transitions, and then state our hierarchical analogue of the Imbrie-Newman conjecture. Finally, we state a theorem giving a sharp analysis of which kernels can produce phase transitions at all and explain how this result underpins all our other results.

\medskip

\noindent \textbf{Discontinuous phase transitions occur only for $J\approx \|x-y\|^{-2d}\log\log\|x-y\|$.}
Our first main result concerns the nature of the percolation phase transition when it is non-trivial. Suppose $\mathcal{J}$ is a proper family of kernels.
As the existence of an infinite open cluster is a monotone property, we can define the critical value
\begin{align*}
	\lambda_c = \lambda_c(\cJ) = \inf \left\{ \lambda \geq 0 : \p_{\cJ(\lambda)} \left( |K_0| = \infty \right)> 0 \right\},
\end{align*}
where $K_0$ denotes the cluster of the origin. The critical value $\lambda_c$ is positive by the assumption that $\cJ$ is non-percolative at $0$.
By translation-invariance, the almost sure existence of an infinite open cluster is equivalent to the origin having a positive probability of being in an infinite cluster. Note also that the infinite cluster is a.s.\ unique when it exists for any isometry-invariant kernel $J$, since $(\H_L^d,J)$ is an amenable transitive weighted graph.
For families of kernels $\cJ$ for which a phase transition exists (i.e., for which $\lambda_c(\cJ) \in (0,\infty)$), we are interested in the question of whether an infinite cluster exists for $\lambda = \lambda_c$. We prove that, subject to mild regularity assumptions, this is determined by whether $\cJ$ is of larger, smaller, or equal order to the kernel $\|x-y\|^{-2d}\log\log\|x-y\|$.

Before stating this theorem, let us first introduce some relevant asymptotic notation.
Given a function $f:\N\to \R_{\geq 0}$ and a family of kernels $\cJ$, we write $\cJ\approx f$ if
\begin{align*}
0<\liminf_{|e|\to \infty} \frac{\cJ(\lambda,e)}{f(|e|)} \leq \limsup_{|e|\to \infty} \frac{\cJ(\lambda,e)}{f(|e|)} < \infty
\end{align*}
for every $\lambda > 0$. (If the family is continuous, this estimate holds for every $\lambda > 0$ if and only if it holds for $\lambda=1$.) We also write $\cJ \ll f$ if $\limsup_{|e|\to \infty} \cJ(\lambda,e)/f(|e|)=0$ for every $\lambda>0$ and 
$\cJ \gg f$ if $\liminf_{|e|\to \infty} \cJ(\lambda,e)/f(|e|)=\infty$ for every $\lambda>0$.

\begin{theorem}\label{theo:(dis)continuity}
	 Let $\cJ : \left[0,\infty\right) \times E \to \left[0,\infty \right)$ be a proper family of kernels on the hierarchical lattice $\mathbb{H}^d_L$.
     \begin{enumerate}
     \item \emph{(Slow decay $\Rightarrow$ Continuity.)} If $\cJ \gg n^{-2d}\log\log n$ and $\lambda_c<\infty$ then the phase transition is \emph{continuous} in the sense that  $\p_{\lambda_c} ( |K_0| = \infty ) = 0$.
        \item  \emph{(Fast decay $\Rightarrow$ Triviality.)} If $\cJ \ll n^{-2d}\log\log n$ then there is \emph{no phase transition} in the sense that $\lambda_c=\infty$.
  \item \emph{(Critical decay $\Rightarrow$ Discontinuity.)} If $\cJ\approx n^{-2d}\log\log n$ and $\lambda_c<\infty$ then the phase transition is \emph{discontinuous} in the sense that $\p_{\lambda_c} ( |K_0| = \infty ) > 0$.
  \end{enumerate}
\end{theorem}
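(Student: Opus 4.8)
The plan is to analyze all three statements by means of a single renormalization-type recursion relating the clusters inside an $n$-block to those inside the next block $\Lambda_{n+1}$. For $\lambda>0$ set $\beta_n(\lambda)=\E_\lambda|K_0\cap\Lambda_n|$ and $\theta_n(\lambda)=L^{-nd}\beta_n(\lambda)\in(0,1]$. Since $\Lambda_{n+1}$ is a disjoint union of $L^d$ copies of $\Lambda_n$ joined only by edges of length $L^{n+1}$ (each open with probability $1-e^{-\cJ(\lambda,L^{n+1})}$), exploring $K_0\cap\Lambda_{n+1}$ copy by copy shows that the set of $n$-blocks it meets behaves like the root cluster in an auxiliary random graph on $L^d$ vertices with relevant mean-degree parameter
\[
m_n(\lambda)\ \asymp\ \beta_n(\lambda)\,L^{(n+1)d}\cJ(\lambda,L^{n+1})\ =\ \theta_n(\lambda)\,a_n(\lambda),\qquad a_n(\lambda):=L^{(2n+1)d}\cJ(\lambda,L^{n+1}).
\]
Combining a first-moment bound in one direction with a giant-component/connectivity estimate in the other gives two-sided bounds of the schematic form $\theta_{n+1}(\lambda)\asymp\theta_n(\lambda)\,L^{-d}\psi(m_n(\lambda))$, where $\psi(m)\asymp(1-m)^{-1}$ for $m\le 1/2$ and $\psi(m)\asymp L^d\bigl(1-e^{-cm}\bigr)$ for $m\ge 2$, with $c>0$ a fixed constant; iterating, one deduces that $\p_\lambda(0\leftrightarrow\infty)>0$ if and only if $m_n(\lambda)\to\infty$ with $\sum_n e^{-cm_n(\lambda)}<\infty$, and in general $\theta_n(\lambda)\to\p_\lambda(0\leftrightarrow\infty)^2$. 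We will also use the a priori bound $\theta_n(\lambda)\ge\p_\lambda(0\leftrightarrow\infty)^2$ valid for \emph{every} $n$, which follows from Harris's inequality applied to the increasing events $\{0\leftrightarrow\infty\}$ and $\{x\leftrightarrow\infty\}$ together with a.s.\ uniqueness of the infinite cluster. Since $\theta_n(\lambda)\le1$ and $m_n(\lambda)\asymp\theta_n(\lambda)a_n(\lambda)$, whether the model percolates is governed by the size of $a_n(\lambda)$ relative to $\log n=\log\log L^n$ -- which is where the factor $\log\log\|x-y\|$ enters.

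Parts (2) and (3) follow quickly from this. If $\cJ\ll n^{-2d}\log\log n$ then $a_n(\lambda)=o(\log n)$ for every $\lambda$; were $\lambda_c<\infty$, any $\lambda>\lambda_c$ would have $\theta_n(\lambda)\ge\p_\lambda(0\leftrightarrow\infty)^2>0$ for all $n$, hence (recursion) $\sum_n e^{-cm_n(\lambda)}<\infty$, but $m_n(\lambda)\lesssim a_n(\lambda)=o(\log n)$ forces $e^{-cm_n(\lambda)}\ge n^{-o(1)}$ and thus $\sum_n e^{-cm_n(\lambda)}=\infty$, a contradiction; so $\lambda_c=\infty$. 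If instead $\cJ\approx n^{-2d}\log\log n$ then $a_n(\lambda)\asymp\log n$, and the recursion shows that $\p_\lambda(0\leftrightarrow\infty)>0$ can hold only if $\p_\lambda(0\leftrightarrow\infty)^2\gtrsim\bigl(\limsup_n a_n(\lambda)/\log n\bigr)^{-1}$, a hierarchical avatar of the Aizenman--Newman ``$M^2\beta\ge1$'' bound \eqref{eq:M2beta_gap}. By the continuity hypothesis on $\cJ$ the right-hand side is bounded below by a positive constant $\delta$ for all $\lambda$ in a neighbourhood of $\lambda_c$, so $\p_\lambda(0\leftrightarrow\infty)\ge\sqrt{\delta}$ for every $\lambda>\lambda_c$; as $\lambda\mapsto\p_\lambda(0\leftrightarrow\infty)$ is increasing and upper semicontinuous (a decreasing limit of the continuous functions $\lambda\mapsto\p_\lambda(0\leftrightarrow\H_L^d\setminus\Lambda_n)$), it is right-continuous at $\lambda_c$, and letting $\lambda\downarrow\lambda_c$ gives $\p_{\lambda_c}(0\leftrightarrow\infty)\ge\sqrt{\delta}>0$.

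For part (1), suppose $\cJ\gg n^{-2d}\log\log n$, $\lambda_c<\infty$, and, towards a contradiction, $\theta_\star:=\p_{\lambda_c}(0\leftrightarrow\infty)>0$. The a priori bound gives $\theta_n(\lambda_c)\ge\theta_\star^2$ for all $n$, whence $m_n(\lambda_c)\asymp\theta_n(\lambda_c)a_n(\lambda_c)\gg\log n$ and $\sum_n e^{-cm_n(\lambda_c)}<\infty$ with an arbitrarily small tail. Continuity of $\cJ$ gives $\cJ(\lambda,\cdot)\ge\kappa(\lambda)\,\cJ(\lambda_c,\cdot)$ pointwise with $\kappa(\lambda)\to1$ as $\lambda\uparrow\lambda_c$, so these estimates persist for $\lambda$ slightly below $\lambda_c$, and continuity of $\theta_{n_0}(\cdot)$ for each fixed $n_0$ lets us pick $n_0$ large and then $\lambda<\lambda_c$ close to $\lambda_c$ with $\theta_{n_0}(\lambda)\ge\theta_\star^2/2$. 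Running the recursion from scale $n_0$, the multiplicative factors $\prod_{k\ge n_0}\bigl(1-e^{-cm_k(\lambda)}\bigr)$ stay bounded below, so $\theta_n(\lambda)\ge\theta_\star^2/4$ for all $n\ge n_0$; hence $\p_\lambda(0\leftrightarrow\infty)>0$ for some $\lambda<\lambda_c$, contradicting the definition of $\lambda_c$.

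The genuine difficulty is justifying the recursion. One must control the dependence between the internal configurations of the various $n$-blocks and the long edges leaving them -- via carefully ordered sequential explorations together with BK- and FKG-type inequalities, with the regularity hypothesis used to bound escape probabilities -- and, more seriously, establish sharp enough giant-component and connectivity estimates for the auxiliary random graph in the near-critical regime in which the mean degree $m_n$ is only of logarithmic size, precisely enough to decide the convergence of $\sum_n e^{-cm_n}$. This double-logarithmic sensitivity is exactly what singles out the kernel $\|x-y\|^{-2d}\log\log\|x-y\|$; pinning down the implicit constants there (rather than merely the order ``$\asymp$'') is what the Imbrie--Newman-type formula requires, but it is not needed for the qualitative trichotomy proved here.
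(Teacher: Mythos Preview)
Your proposal is a heuristic outline rather than a proof, and you say as much in the final paragraph. The central object is the one-step recursion $\theta_{n+1}\asymp\theta_n L^{-d}\psi(m_n)$ with $m_n\asymp\theta_n a_n$, but this is never established, and as written it cannot be: the probability that $K_0(\Lambda_n)$ connects to a neighbouring $n$-block is governed by the \emph{random} size $|K_0(\Lambda_n)|$, not its expectation $\beta_n=\theta_n L^{nd}$. Expectations do not compose through the merging step without a concentration estimate of the form $|K_0(\Lambda_n)|\asymp\beta_n$ with high probability, and you provide none. (Indeed, for small $\lambda$ the distribution of $|K_0(\Lambda_n)|$ is highly spread out, so any such concentration must itself be bootstrapped from supercriticality --- which is the very thing you are trying to establish in part~(1).) The claimed dichotomy ``$\p_\lambda(0\leftrightarrow\infty)>0$ iff $\sum_n e^{-cm_n}<\infty$'' is therefore unproven.

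The paper avoids this obstacle by abandoning first moments in favour of high-probability block events. For parts~(2) and~(3) it proves the $\beta\theta^2\in\{0\}\cup[1,\infty]$ dichotomy (Proposition~\ref{propo:discont}) directly via a cut-set construction: conditionally on an annulus-localisation event $A_N$, one finds $\Theta(N)$ many independent chances, each of probability $\ge cN^{-\gamma}$ with $\gamma<1$, to disconnect $\Lambda_{N_i-M}$ from $\Lambda_{N_i+M}^c$. This uses no recursion for $\theta_n$ at all. For part~(1), the paper first proves (Lemma~\ref{lem:cluster size restricted}) that if $\theta>0$ then $|K_{\max}(\Lambda_n)|\ge(\theta-\varepsilon)|\Lambda_n|$ with high probability, then coarse-grains: good $n$-blocks (those with a dense maximal cluster) become sites of a site-bond model on $\mathbb{H}^d_L$, and the hypothesis $\cJ\gg n^{-2d}\log\log n$ makes this coarse-grained model dominate a kernel of the form $M n^{-2d}\log\log n$ with $M$ arbitrarily large, which percolates by Theorem~\ref{theo:existence large a}. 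Continuity in $\lambda$ then pushes this below $\lambda_c$. Your part~(1) argument has the same supercritical-stability shape, but leans on the unproven recursion in place of the block construction and the independently-proven existence result.

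In short: the strategic outline for parts~(2)--(3) matches the paper's Proposition~\ref{propo:discont}, and the outline for part~(1) matches the paper's Section~\ref{sec:continuity}, but the analytic engine you propose --- a closed recursion for the scalar $\theta_n$ --- is the wrong object, and replacing it with the right one (high-probability control of $|K_{\max}(\Lambda_n)|$) is most of the work.
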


We will return to the question of when $\lambda_c<\infty$ below.

As stated earlier, we conjecture that an analogous theorem holds on $\Z$ with $\|x-y\|^{-2d}\log\log\|x-y\|$ replaced by $|x-y|^{-2}$. Following \cite{MR868738,duminil2020long}, the only part of this conjecture that remains open is to show that the phase transition is continuous under the minimal assumption that $\cJ \gg n^{-2}$ (it appears that all existing methods  \cite{MR1896880,hutchcroft2020power,hutchcroft2022sharp} require an additional logarithmic factor in the kernel).

The part of \Cref{theo:(dis)continuity} concerning continuity was previously established for kernels of the form $\cJ(\lambda,e) = \lambda|e|^{-d-\alpha}$ for $\alpha \in (0,d)$ by Koval, Meester, and Trapman \cite{MR2955049}; our proof follows an optimized version of their strategy.
More concretely, our proof of continuity follows the ``supercritical strategy'' \cite[Section 1.1]{hutchcroft2020power}: We show that the set $\left\{\lambda : \p_{\lambda} \left( |K_0| = \infty \right) > 0\right\}$ is open by proving that if an infinite cluster exists for some $\lambda>0$ then an infinite cluster still exists at $\lambda-\eps$ for small enough $\eps>0$. To prove this, we show that a certain coarse-grained version of the model must be \emph{highly-supercritical} whenever the original model has an infinite cluster. (Alternative proofs of continuity for the kernels $\lambda |e|^{-d-\alpha}$ with $\alpha\in (0,d)$ using the \emph{subcritical strategy} are given in \cite{hutchcroft2020power,hutchcrofthierarchical}. These proofs give stronger quantitative conclusions when they apply, but do not seem to generalize to the class of kernels we consider here.) Compared to \cite{MR2955049}, the main technical improvement is our sharper analysis of which kernels admit a phase transition at all, which is an important input to this coarse-graining argument.

\medskip

\noindent \textbf{The hierarchical analogue of the Imbrie--Newman conjecture.}
Analogously to the Euclidean arguments discussed around \eqref{eq:M2beta_gap}, the discontinuity part of \Cref{theo:(dis)continuity} is proven by showing that
\begin{equation}
\p_{\lambda} ( 0\leftrightarrow \infty)^2 \left(\limsup_{e\to \infty} \cJ(\lambda,e) \frac{L^{-d}|e|^{2d}}{\log\log |e|}\right) \in \{0\} \cup [1,\infty],
\end{equation}
see Proposition \ref{propo:discont} below. The function $\theta(\lambda) \coloneqq \p_{\lambda} (0\leftrightarrow\infty)$ is right continuous since it is the infimum of the continuous increasing functions $\p_\lambda( |K_0|\geq n)$.
Writing $\beta^*(\lambda) = \limsup_{e\to \infty}L^{-d} \cJ(\lambda,e) |e|^{2d} (\log\log |e|)^{-1}$, this implies  that $\theta(\lambda_c) \geq \beta^*(\lambda_c)^{-1/2}>0$ when $\beta^*(\lambda_c)<\infty$.
Our next main theorem states that this is in fact an \emph{equality} under the assumption that the limit $\beta(\lambda_c) \coloneqq \limsup_{e\to \infty}L^{-d} \cJ(\lambda_c,e) |e|^{2d} \left(\log\log |e|\right)^{-1}$ is well-defined; this is the natural hierarchical analogue of the aforementioned conjecture of Imbrie and Newman \eqref{eq:ImbrieNewman}.

\begin{theorem}\label{theo:ImbrieNewman}
	Let $\cJ : \left[0,\infty\right) \times E \to \left[0,\infty \right)$ be a proper family of kernels with $\lambda_c(\cJ)<\infty$. If the limit $\beta = \lim_{|e|\to \infty} L^{-d}\cJ(\lambda_c,e) |e|^{2d} \left(\log\log|e|\right)^{-1}$
	is well-defined and finite, then
        $\p_{\lambda_c} \left( 0\leftrightarrow  \infty \right)^2 \beta = 1$.
\end{theorem}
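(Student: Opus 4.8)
The plan is to establish the matching upper bound $\theta(\lambda_c)^2\beta\le 1$, since the lower bound $\theta(\lambda_c)^2\beta\ge 1$ is already contained in Proposition \ref{propo:discont} (the ``$M^2\beta=1$''-type inequality). The natural strategy is a second-moment / truncated first-moment argument at the block scale, exploiting the recursive structure of $\H^d_L$. Fix a large $n$ and consider the $n$-block $\Lambda_n$, which consists of $L^d$ disjoint copies $B_1,\dots,B_{L^d}$ of $\Lambda_{n-1}$. Condition on the restriction of the percolation configuration to all edges of length $\le L^{n-1}$, i.e.\ on the clusters within each sub-block $B_i$. Writing $\theta_{n-1}=\p_{\lambda_c}(|K_0\cap \Lambda_{n-1}|\ge$ ``giant''$)$ for the probability that the origin lies in the largest cluster of its $(n-1)$-block, the key quantitative relation to prove is a recursion of the form
\begin{equation*}
\theta_n \;\le\; 1-\left(1-\theta_{n-1}\right)\exp\!\left(-(1+o(1))\,\beta\,L^{-d}\,\theta_{n-1}^2\,\frac{\log\log L^n}{(L^n)^{2d}}\cdot(\text{number of cross-edges})\right),
\end{equation*}
which after computing the number of cross-edges $\sim L^{2d(n-1)}\cdot\frac12 L^d(L^d-1)$ and using the kernel asymptotics $\cJ(\lambda_c,e)\sim \beta L^{-d}(\log\log|e|)|e|^{-2d}$ collapses to something like $1-\theta_n\ge (1-\theta_{n-1})\exp(-c\,\beta\,\theta_{n-1}^2/\log n)$. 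Since $\sum_n 1/\log n=\infty$ while the product telescopes, if $\theta(\lambda_c)=\lim\theta_n$ were strictly larger than $\beta^{-1/2}$ we would get $1-\theta_n\to 0$, i.e.\ the infinite cluster would contain a.s.\ \emph{every} vertex, which contradicts the existence of finite clusters at $\lambda_c$ (these have positive density by a standard FKG/ergodicity argument). Tracking the constant carefully through this computation should pin down the threshold at exactly $\beta^{-1/2}$.

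Making the heuristic recursion rigorous is where the real work lies, and this is why I expect it to be the main obstacle. Two issues must be handled. First, the recursion as stated pretends that ``being in the giant cluster of the $n$-block'' behaves like a clean Bernoulli variable across sub-blocks, whereas in reality one must work with the full cluster-size profile: I would define $\eta_n$ to be the (random) density of the largest cluster in $\Lambda_n$ and show, using the second moment method on the number of cross-edges between the giant clusters of the $B_i$'s, that $\eta_n$ concentrates and its mean satisfies the recursion above. The regularity hypothesis on $\cJ$ and the exact value of $\beta$ enter precisely in controlling the cross-edge connection probability to leading order $\beta L^{-d}\theta_{n-1}^2 (\log\log L^n)/(L^n)^{2d}\cdot(\#\text{cross-edges})$. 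Second, one must rule out that a large cluster in $\Lambda_n$ is assembled from many medium-sized clusters in the sub-blocks rather than from $\ge 2$ giant ones; here the fact (from the no-phase-transition analysis underpinning Theorem \ref{theo:(dis)continuity}(2)) that subcritical clusters in a box of side $L^n$ have size $o((L^n)^d/\log n)$-ish, together with the kernel being only \emph{barely} summable, should force the giant-from-giants mechanism to dominate — this is the step most sensitive to the $\log\log$ correction and will likely require the delicate estimates the authors advertise as ``the most technical part of the paper.''

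A cleaner alternative route, which I would pursue in parallel, is to directly estimate $\theta(\lambda_c)=\lim_n \p_{\lambda_c}(0\leftrightarrow \partial \Lambda_n)$ via the same block recursion but phrased as a supermartingale: set $M_n := -\log(1-\eta_n) - \frac12(\log\log L^n)\cdot(\text{correction})$ or similar, show $(M_n)$ has bounded increments and a drift determined by $\beta\theta_{n-1}^2$, and conclude by optional stopping that $\theta(\lambda_c)$ cannot exceed the value at which the drift vanishes, namely $\beta^{-1/2}$. Either way, the combinatorial heart is identical: count the $\Theta(L^{2dn}/\log n)$ ``effective'' long edges emanating from a giant sub-cluster at scale $L^n$, each open with probability $\sim \beta L^{-d}(\log\log L^n)(L^n)^{-2d}$ times an occupancy factor $\theta_{n-1}^2$, and observe that the expected number of such edges is $\sim \beta\theta_{n-1}^2/\log n$, whose divergent sum forces full occupancy unless $\beta\theta(\lambda_c)^2\le 1$. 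Combined with Proposition \ref{propo:discont} this yields the claimed equality $\p_{\lambda_c}(0\leftrightarrow\infty)^2\beta=1$.
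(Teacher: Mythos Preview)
Your approach has a genuine gap and differs fundamentally from the paper's. The paper argues by contradiction via the supercritical strategy: assuming $\theta(\lambda_c)^2\beta>1$, one renormalizes by contracting $n$-blocks, declaring a block ``good'' if its largest cluster has density $\ge \theta(\lambda_c)-\eps$ (which happens with high probability by Lemma~\ref{lem:cluster size restricted}), and connecting good blocks through their largest clusters. The resulting site-bond model stochastically dominates hierarchical percolation with kernel $\sim aL^d\log\log|e|/|e|^{2d}$ for some $a=(\theta(\lambda_c)-2\eps)^2\beta^\star>1$, which by Theorem~\ref{theo:existence mixed} (the sharp $a>1$ existence result, and the technical heart of the paper) percolates. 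By continuity of $\cJ$ in $\lambda$, the same domination persists at some $\lambda<\lambda_c$, contradicting the definition of $\lambda_c$. The contradiction is with $\lambda_c$ being critical, not with $\theta(\lambda_c)<1$.

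Your recursion does not yield the conclusion you draw from it: the inequality $1-\theta_n\ge(1-\theta_{n-1})\exp(-c\beta\theta_{n-1}^2/\log n)$ is a \emph{lower} bound on $1-\theta_n$, so the divergence of $\sum 1/\log n$ tells you nothing about whether $1-\theta_n\to 0$; you would need the reverse inequality, which amounts to showing the giant absorbs essentially everything at the right rate, and that is the hard direction. Your edge arithmetic is also off: since $\log\log(L^n)\sim\log n$, the expected number of open edges between two sub-block giants of size $\theta L^{d(n-1)}$ is $\theta^2 L^{2d(n-1)}\cdot \beta L^d(\log n)L^{-2dn}=\theta^2\beta(\log n)/L^d$, which \emph{grows} with $n$; the single-scale disconnection probability is therefore $n^{-\Theta(\theta^2\beta)}$, not $e^{-c/\log n}$. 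Extracting the exact threshold $\theta^2\beta=1$ from how these power-law disconnection probabilities iterate across scales is precisely the content of Theorem~\ref{theo:existence mixed}, and your proposal offers no route to replace or reproduce that analysis.
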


Parts $1$ and $2$ of \Cref{theo:(dis)continuity} correspond formally to the $\beta=\infty$ and $\beta=0$ cases of this theorem, respectively.
\Cref{theo:ImbrieNewman} is significantly more difficult to prove than \Cref{theo:(dis)continuity} as we outline in more detail momentarily.

\medskip

\noindent \textbf{Sharp analysis of the (non)triviality of the phase transition.}
Underlying both \Cref{theo:(dis)continuity,theo:ImbrieNewman} is a sharp analysis of which translation-invariant kernels can produce infinite clusters.
For an isometry-invariant kernel, the weight of an edge depends only on the distance of its endpoints, so we may also consider the kernel as a function $J:\{L, L^2, L^3,\ldots\} \to [0,\infty)$ defined by $J(|e|)=J(e)$. 
Given a function $f:\N\to [0,\infty)$ and a kernel $J$, we say that $J\sim f$ if $\lim_{|e|\to \infty} J(e)/f(|e|) = 1$.
Roughly speaking, we prove that the transition between models that can or cannot have an infinite open cluster occurs precisely when $J \sim L^d n^{-2d} \log\log n$. For \Cref{theo:(dis)continuity}, it is sufficient to prove that this transition occurs on the \emph{order} $n^{-2d} \log\log n$, which is relatively straightforward, while \Cref{theo:ImbrieNewman} requires the exact determination of the leading constant, which is significantly more delicate.

\begin{theorem}\label{theo:existence}
	For all $d,L\in \N$ and $a>1$ there exists an isometry-invariant kernel $J$ on $\mathbb{H}^d_L$ satisfying 
	   \vspace{-0.5em} \begin{align*}
		J \sim \frac{a L^d \log\log n}{n^{2d}}
	\end{align*}
	such that $\p_{J} \left( |K_0| = \infty \right)> 0$. 
	On the other hand, if $J$ is an isometry-invariant kernel on $\H^d_L$ such that
	\begin{align*}
	\limsup_{n\to \infty}
	 \frac{L^{-d} \left( L^n \right)^{2d}}{\log\log \left( L^n \right)} J\!\left(L^n\right) \leq 1
	\end{align*}
	then $\p_{J} \left( |K_0| = \infty \right) = 0$. 
\end{theorem}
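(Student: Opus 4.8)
\textbf{Proof proposal for \Cref{theo:existence}.}

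The plan is to analyze hierarchical percolation via its natural recursive/branching-process structure, studying how the probability that an $n$-block contains a "large" cluster evolves as $n$ increases. The key observable is something like $\theta_n \coloneqq \p_J(\text{the origin lies in a cluster that exits } \Lambda_n)$ or, more robustly, a two-point quantity measuring the chance that a uniformly chosen pair of $n$-blocks inside $\Lambda_{n+1}$ are directly connected by an open edge and each contains a macroscopic cluster. Because $\H^d_L$ is built from $L^d$ copies of $\Lambda_n$ glued at scale $L^{n+1}$, and each of the $\binom{L^d}{2}L^{2n}$ potential edges between two given subblocks is open independently with probability $1-e^{-J(L^{n+1})} \approx J(L^{n+1}) \approx a L^d (\log\log L^{n+1})/(L^{n+1})^{2d} = a (\log\log L^{n+1}) L^{-d(2n+1)}$, the expected number of open edges between two fixed subblocks is $\asymp a \log\log L^{n+1} \asymp a \log n$, which diverges. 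This is the right regime: edges between subblocks are individually rare but collectively numerous on the logarithmic scale, so the recursion for the survival probability should behave like a slowly-evolving branching process.

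For the \emph{existence} half, I would fix $a>1$ and construct $J$ by hand so that $J(L^n) = a L^d (\log\log L^n)/(L^n)^{2d}$ for all large $n$ (and, say, $J \equiv 0$ or some small constant on small scales — actually, since we want percolation we should take $J$ large on small scales, e.g. $J(L^n) = $ large constant for $n \le N_0$, to seed the recursion; note the theorem only asserts \emph{some} kernel with the given asymptotics works). Then set $\theta_n = \p_J(|K_0 \cap \Lambda_n| \ge m_n)$ for a suitable slowly-growing threshold $m_n$ (e.g. $m_n = L^{dn/2}$ or a polylog — the exact choice determined in the proof), and derive a recursive lower bound of the form $\theta_{n+1} \ge 1 - (1-c\, p_n \theta_n^2 L^{2n})^{\,L^d-1}$ capturing the event that the origin's subblock connects to at least one other subblock each carrying a large cluster, where $p_n = 1-e^{-J(L^{n+1})}$. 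Since $p_n L^{2n} \asymp a \log n$, this reads roughly $\theta_{n+1} \gtrsim 1 - \exp(-c\, a (\log n)\, \theta_n^2)$; if $\theta_n$ stays bounded below by a constant $\delta>0$ then $\theta_{n+1} \ge 1 - n^{-ca\delta^2} \to 1$, and one checks self-consistently that for $a>1$ one can bootstrap $\theta_n$ away from $0$ up to near $1$, yielding $\p_J(|K_0|=\infty) = \lim \theta_n > 0$. The delicate point is choosing $m_n$ and the event structure so that (i) the "large cluster in a subblock" events combine across the two endpoints of an edge with enough independence, and (ii) the threshold $m_n$ grows slowly enough that $\theta_n$ doesn't decay yet fast enough that the union over $\asymp L^{2n}$ candidate edges is genuinely effective — this is the standard tension in such second-moment/recursive arguments and I expect it to be the main technical obstacle on this side.

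For the \emph{nontriviality-fails} half, suppose $\limsup_n L^{-d}(L^n)^{2d} J(L^n)/\log\log(L^n) \le 1$, so that for every $\eps>0$ we have $J(L^n) \le (1+\eps) L^d (\log\log L^n)/(L^n)^{2d}$ for all large $n$. I would run a first-moment / subadditivity argument on $q_n \coloneqq \p_J(0 \leftrightarrow \partial \Lambda_n)$ or on $\E_J|K_0 \cap \Lambda_n|$. The cleanest route is probably an expected-cluster-size recursion: let $\chi_n = \E_J|K_0 \cap \Lambda_n|$; conditioning on the structure inside the subblock $\Lambda_n$ and the (Poisson-like) number of open edges leaving it to the $L^d-1$ sibling subblocks at scale $n+1$, one gets $\chi_{n+1} \le \chi_n \cdot \big(1 + (L^d-1)\, (1-e^{-J(L^{n+1})})\, L^{2n} \cdot (\text{something} \le \chi_n \text{ or } \le L^{dn})\big)$ — the precise second factor requires care because a single open edge to a sibling block can connect to a large cluster there. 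A better-behaved quantity is the survival probability: writing $\rho_n = \p_J(0 \leftrightarrow \partial\Lambda_n)$, an edge leaving $\Lambda_n(0)$ is useful only if it lands in the cluster within a sibling block, giving $\rho_{n+1} \le 1-\exp(-C (L^d-1) p_n L^{2n} \rho_n)$ with $p_n L^{2n} \asymp (1+\eps)\log n$; combined with the fact that $\rho_n \le \chi_n / (\text{vol})$-type bounds, one wants to show $\rho_n \to 0$. The heart of the matter is that when the effective offspring parameter is exactly $\sim \log n$, the product $\prod_n (1 + c\log n \cdot \rho_n)$-type growth is counteracted by the decay of $\rho_n$ precisely at the borderline, and one needs the $\log\log$ (equivalently: $\sum 1/(n\log n) = \infty$ but only barely) to close the loop. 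I expect the main obstacle here — and the reason the paper flags \Cref{theo:existence} with its sharp constant as delicate — is getting the recursion tight enough that the constant $1$ on the right-hand side is exactly the threshold: a lossy first-moment bound would only give triviality below some $c \cdot L^d n^{-2d}\log\log n$, so one must track the combinatorial factor $\binom{L^d}{2}L^{2n}$ versus the volume $L^{d(n+1)}$ and the $\log\log L^n \sim \log(n\log L) \sim \log n$ asymptotics without slack, likely via a carefully chosen Lyapunov function of $\rho_n$ (e.g. comparing $-\log\rho_n$ or $1/\rho_n$ against $\sum_{k\le n} 1/(k)$) rather than a crude union bound.
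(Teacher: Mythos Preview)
Your proposal has the right high-level intuition for both halves, but each has a genuine gap.

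\textbf{Existence half.} Your recursion $\theta_{n+1} \gtrsim 1 - \exp\bigl(-c\, a\, (\log n)\,\theta_n^2\bigr)$ is essentially the content of the paper's Section~\ref{sec:large a} (Lemma~\ref{lem:renorm ineq}), which proves the result only for large $a$ (the paper needs $a>200L$). The assertion ``for $a>1$ one can bootstrap $\theta_n$ away from $0$'' is precisely the hard part and does not follow from your recursion: the constant $c$ absorbs losses that you cannot recover. The obstruction is that the probability that $L^r$ subblocks fail to merge is only $\asymp n^{-a/2}$ (Lemma~\ref{lem:semi-good}, sharp by the subsequent remark), so for $a$ just above $1$ the ``bad block'' probability is $\lesssim n^{-1/2-\eps}$, which is not summable. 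The paper's fix (Section~\ref{sec:renormalization_step}) is a three-state renormalization (good/mediocre/bad): a single merging failure produces only a ``mediocre'' block that can still be repaired at the next scale, and it takes \emph{two} independent failures---probability $\lesssim n^{-1-2\eps}$---to produce a genuinely bad block. Implementing this squaring requires the sharp exponent computations of Lemmas~\ref{lem:allbutone}--\ref{lem:semi-good2}, none of which your single-threshold recursion can see.

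\textbf{Non-existence half.} The proposed recursion $\rho_{n+1} \le 1-\exp(-C p_n L^{2n} \rho_n)$ is not a route to the result. Even granting such an inequality, it cannot rule out $\rho_n \to \theta > 0$: substituting $\rho_n=\theta$ gives $\theta \le 1-n^{-C\theta}$, which is trivially satisfied for large $n$. Your phrase ``the decay of $\rho_n$'' presupposes what must be proved. The paper's argument (Proposition~\ref{propo:discont}) is completely different and follows Aizenman--Newman: assuming $\beta\theta^2<1$, one constructs at each of $\sim N$ well-separated annular scales a ``cutting'' event of probability $\gtrsim N^{-\gamma}$ with $\gamma<1$ (the inequality $\gamma<1$ holding \emph{precisely because} $\beta\theta^2<1$); conditional independence then forces one cut to occur with high probability, whence $\theta=0$. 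Building the cutting event requires separating big and small clusters inside annuli and controlling their interactions via FKG and BK---there is no first-moment or Lyapunov-function substitute for this construction at the sharp constant.
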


This result complements and sharpens the results of Dawson and Gorostiza \cite{MR3035740} who showed that  $\p_{J} \left( |K_0| = \infty \right) = 0$ for every kernel $J$ with $J \sim a L^{d} n^{-2d} \log\log n$ with $a < 1$; our main contribution is to prove that the phase transition can be non-trivial for any value $a>1$. Previously, the existence of an infinite open cluster under $\lambda J$ for sufficiently large $\lambda$ was established for kernels $J$ satisfying $J\sim  n^{-2d} (\log n)^\alpha$ with $\alpha>0$
\cite{MR3035740,MR3769822}
and for $J\sim n^{-d-\alpha}$ with $\alpha \in (0,d)$ \cite{MR2955049} (see also \cite{hutchcrofthierarchical,hutchcroft2020power}).  

\begin{remark}
The expected cluster size for the hierarchical model with kernel $\lambda \|x-y\|^{-2d}$ was shown in \cite{easo2024double} to have order $\exp(\exp(\Theta(\lambda)))$ as $\lambda \to \infty$, a fact that is presumably related to the appearance of the $\log\log\|x-y\|$ terms in our results. The sharper analysis of
\Cref{theo:ImbrieNewman,theo:existence} should be related to improving the results of \cite{easo2024double} to compute a precise constant $C$ such that $\log\log \E_\lambda |K| \sim C \lambda$ as $\lambda\to \infty$ for the hierarchical model with kernel $\lambda\|x-y\|^{-2d}$.
\end{remark}

\medskip

For the proof of Theorem \ref{theo:ImbrieNewman}, we will require not just Theorem \ref{theo:existence}, but the following slightly stronger theorem concerning a mixed site-bond model. Such models arise naturally in the context of coarse-grained renormalization arguments. Here we write $\p_{q,J}$ for the random graph in which each vertex is included independently at random with probability $q\in [0,1]$ and, independently, each edge between open vertices is included independently at random with probability $1-\exp(-J(e))$.

 \begin{theorem}\label{theo:existence mixed}
	For all $L\in \N, d \in \N$, and $a>1$ there exists $q\in (0,1)$ and $N_0, N_1 \in \N$  such that site-bond percolation on the hierarchical lattice $\H_L^{d}$ with the kernel $J$ defined by 
	\begin{align*}
	J(e) = \begin{cases}
		 \frac{a L^d \log\log(|e|)}{|e|^{2d}} & \text{ for } |e| \geq N_1\\
		 N_0 & \text{ for } |e| < N_1
		 \end{cases}
	\end{align*}
	satisfies $\p_{q,J} \left( |K_0| = \infty \right)> 0$.
\end{theorem}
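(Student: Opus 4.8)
The plan is to reduce Theorem~\ref{theo:existence mixed} to Theorem~\ref{theo:existence} by a two-scale comparison: the site-bond model at density $q$ close to $1$, with the short edges (length $<N_1$) declared open with the large probability $1-e^{-N_0}$, should dominate a pure-bond model on a sparsified copy of $\mathbb{H}_L^d$ with a kernel of the same form $a'L^d n^{-2d}\log\log n$ for some $a'$ still strictly bigger than $1$. The key point is that deleting a $q$-fraction of vertices and insisting that short edges connect surviving vertices only costs us a bounded, $n$-independent factor in the effective long-edge kernel, and since we have the whole open interval $(1,\infty)$ of admissible constants available in Theorem~\ref{theo:existence}, we can afford to lose such a factor provided $a>1$ was given and $q$ is chosen close enough to $1$. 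Concretely, fix $a>1$, pick $a'\in(1,a)$, and then choose the renormalization block scale and $q$ so that the comparison goes through.

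The main steps, in order, would be: (1)~\textbf{Renormalization setup.} Fix a block scale $k\in\N$ and partition $\mathbb{H}_L^d$ into $k$-blocks; these are the vertices of a rescaled copy of $\mathbb{H}_{L}^d$ (with $L^k$ playing the role of $L$, or equivalently $\mathbb{H}_{L}^d$ itself after relabeling). Call a $k$-block \emph{good} if, in the site-bond configuration restricted to that block, the surviving vertices together with the open short edges among them contain a connected component---a ``core''---of size at least some fixed fraction of $q L^{kd}$; a standard first-moment/second-moment or subcritical-to-supercritical argument with $N_0$ large and $q$ close to $1$ makes $\p(k\text{-block is good})$ as close to $1$ as we like, uniformly, and the goodness events of distinct $k$-blocks are independent. (2)~\textbf{Edge kernel between cores.} For two good $k$-blocks $B,B'$ at hierarchical distance $L^m$ (so $m>k$), each of size $\Theta(L^{kd})$, the number of potential long edges between them is $\Theta(L^{2kd})$, each open with probability $\asymp a L^d L^{-2dm}\log\log L^m$; hence the probability that the two cores are directly joined is, up to constants, $L^{2kd}\cdot a L^d L^{-2dm}\log\log(L^m)\asymp a L^{d}L^{-2d(m-k)}\log\log(L^{m})$. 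Writing $n=L^{m-k}$ for the rescaled distance and absorbing the $\log\log(L^m)/\log\log(n)\to 1$ discrepancy (this is exactly why the $\log\log$ rate is the right one---rescaling preserves it asymptotically), this is $\asymp c\, a L^d n^{-2d}\log\log n$ for a constant $c=c(k,q,d,L)$ that can be made $\ge a'/a$ by taking $k$ large (each unit increase in $k$ multiplies the core-size lower bound, hence the pair-connection probability, by a controlled factor). (3)~\textbf{Domination and conclusion.} Standard stochastic-domination results for $k$-dependent site percolation with high marginals (e.g. Liggett--Schonmann--Stacey) plus independence across blocks show that the ``good block + connected cores'' process dominates an independent percolation model on the rescaled hierarchical lattice with kernel $\ge a'L^d n^{-2d}\log\log n$; this kernel is isometry-invariant, integrable, and regular, so Theorem~\ref{theo:existence} (applied with the constant $a'>1$) gives an infinite cluster there, which pulls back to an infinite cluster in the original site-bond model. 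It remains to verify that the cut-off form in the statement (constant $N_0$ on short edges, the asymptotic formula on long edges with some $N_1$) is exactly what this construction needs: $N_0$ and $N_1$ are the parameters we are free to choose, and $N_1$ just has to be large enough that the asymptotics in step~(2) are in force.

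The main obstacle I expect is step~(2)--(3) bookkeeping: making the loss constant $c(k,q,d,L)$ genuinely $\ge a'/a$ \emph{simultaneously} with making the block-goodness probability close enough to $1$ for the Liggett--Schonmann--Stacey domination to kick in, since increasing $k$ helps the former but one must check it does not hurt the latter (it does not, because goodness only gets easier at larger scales once $N_0$ is fixed large, but this needs a clean monotonicity or direct estimate). A secondary subtlety is that the ``core'' of a good block should be chosen canonically (e.g. the lexicographically-first largest component) so that the pair-connection events between cores of distinct blocks, conditionally on goodness, retain enough independence to apply the domination theorem; alternatively one works with a sprinkling argument, revealing block-internal randomness and inter-block edges in two stages. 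Everything else---the integrability and regularity of the rescaled kernel, the passage back to the original lattice---is routine given the tools already developed in the paper.
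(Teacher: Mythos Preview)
Your proposal has a genuine gap at the domination step. After the block renormalization in step~(1), the induced process on $k$-blocks is itself a \emph{site-bond} model: blocks are good independently with some probability $p<1$, and pairs of good blocks are joined independently via the core-to-core kernel. This does not stochastically dominate any pure bond model on the rescaled lattice, because whenever a block is bad (which happens with probability $1-p>0$) all edges incident to it are absent---no i.i.d.\ bond configuration has this feature. Liggett--Schonmann--Stacey lets a $k$-dependent site field dominate an i.i.d.\ site field; it does not convert site-bond into bond. So you cannot invoke Theorem~\ref{theo:existence} in step~(3); what you would actually need there is Theorem~\ref{theo:existence mixed} itself, for some $a'\in(1,a)$ and some new $q'<1$, and the argument is circular. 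Nor is there a bootstrap: renormalization does not increase the effective constant (your claim that $c$ can be pushed up ``by taking $k$ large'' is wrong---increasing $k$ scales the core size and the block size by the same factor $L^{kd}$, leaving the effective constant at $c_0^2 a\le q^2 a$), so you cannot reduce to the easy large-$a$ regime of Theorem~\ref{theo:existence large a} either.

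The paper's proof is accordingly a direct multiscale renormalization rather than a one-step reduction. The real work is in Section~\ref{sec:finiteboxes}: Lemmas~\ref{lem:allbutone} and~\ref{lem:semi-good} show that the coarse-grained graph $\omega_{n,n+k}$ on $L^k$ blocks (even with one block deleted) is connected except with probability $\le Cn^{-a/2}$, and this exponent $a/2$ is essentially sharp. For $a>1$ this gives error terms that are summable after squaring, which drives an induction over a sequence of scales $M_g$ (Section~\ref{sec:renormalization_step}) using a three-tier good/mediocre/bad classification that allows a controlled number of defects to be repaired one level up. The sharpness of the $a/2$ exponent is precisely what separates the $a>1$ result from the crude $a>200L$ argument of Section~\ref{sec:large a}, and it cannot be accessed by a single coarse-graining step plus a black-box appeal to Theorem~\ref{theo:existence}.
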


\noindent \textbf{Organization.} 
We will first prove a weaker version of Theorem \ref{theo:existence mixed} in Section \ref{sec:large a}. Using this weaker version, Theorem \ref{theo:existence large a}, we prove the continuity of the phase transition for families of kernels with slow decay (Theorem \ref{theo:(dis)continuity}, part 1) in Section \ref{sec:continuity}. After this, we will prove the triviality and discontinuity of the phase transition, i.e., Theorem \ref{theo:(dis)continuity} parts 2 and 3, for kernels of fast and critical decay, respectively, in Section \ref{sec:discontin}. 
We will prove Theorem \ref{theo:existence mixed} in Section \ref{sec:pt for large a}, and conclude the proof of Theorem \ref{theo:ImbrieNewman} in Section \ref{sec:imbrienewman}.

Note that it suffices to prove all the aforementioned results for dimension $d=1$ thanks to the following observation:

\begin{observation}
	Suppose that the results of Theorems \ref{theo:(dis)continuity}, \ref{theo:ImbrieNewman}, \ref{theo:existence}, and \ref{theo:existence mixed} hold for $d=1$ and all $L\in \N$. Then these results hold for all $L,d \in \N$.
\end{observation}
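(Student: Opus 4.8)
The plan is to reduce the $d$-dimensional hierarchical lattice $\H^d_L$ to the one-dimensional hierarchical lattice $\H^1_{L^d}$ via an explicit isometry of metric spaces (up to rescaling of distances by a factor of $d$ in the exponent), and then to check that every relevant hypothesis and conclusion in Theorems~\ref{theo:(dis)continuity}--\ref{theo:existence mixed} is transported correctly under this identification. Concretely, I would identify the group $\bigoplus_{i=1}^\infty (\Z/L\Z)^d$ with $\bigoplus_{i=1}^\infty \Z/L^d\Z$ by fixing, for each coordinate block $i$, a bijection $(\Z/L\Z)^d \to \{0,1,\dots,L^d-1\}$; this is a bijection of the underlying vertex sets, and one checks that it sends the $n$-block $\Lambda_n$ of $\H^d_L$ to the $n$-block of $\H^1_{L^d}$ for every $n$. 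Consequently $\|x-y\|_{\H^d_L} = L^n \iff \|\phi(x)-\phi(y)\|_{\H^1_{L^d}} = (L^d)^n$, i.e.\ distances satisfy $\|x-y\|_{\H^d_L} = \|\phi(x)-\phi(y)\|_{\H^1_{L^d}}^{1/d}$ (equivalently, the $n$-th distance scale of one equals the $n$-th distance scale of the other, with $L \leftrightarrow L^d$). Percolation configurations, clusters, and the event $\{|K_0|=\infty\}$ are preserved since $\phi$ is a graph isomorphism of the complete graph on the vertex set respecting edge lengths in the sense just described.

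Next I would translate the kernel conditions. An isometry-invariant kernel $J$ on $\H^d_L$ is a function of $|e|\in\{L,L^2,\dots\}$, and under $\phi$ it becomes an isometry-invariant kernel $\tilde J$ on $\H^1_{L^d}$ with $\tilde J((L^d)^n) = J(L^n)$. Integrability $\sum_{e:0\in e} J(e)<\infty$ and the regularity inequality $\sum_{x\notin\Lambda_n}J(0,x)\le C\sum_{x\in\Lambda_{n+1}\setminus\Lambda_n}J(0,x)$ are both statements about sums over blocks, and blocks correspond under $\phi$, so these properties transfer verbatim (with the same constant $C$); likewise monotonicity, continuity in the sense of \eqref{proper:conti}, and non-percolativity at $0$ are preserved because they only involve the values $\cJ(\lambda,e)$ and the incidence sums, all of which are unchanged. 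Thus a proper family $\cJ$ on $\H^d_L$ yields a proper family $\tilde\cJ$ on $\H^1_{L^d}$ with the same $\lambda_c$, the same $\theta(\lambda) = \p_\lambda(0\leftrightarrow\infty)$, and the same (dis)continuity behaviour at $\lambda_c$.

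The only genuinely substantive check is that the threshold function matches: I need $J(L^n)$ of order (resp.\ exactly asymptotic to a constant times) $(L^n)^{-2d}\log\log(L^n)$ on $\H^d_L$ to correspond, under $n\mapsto$ the distance scale $r = (L^d)^n$ on $\H^1_{L^d}$, to $\tilde J(r)$ of order (resp.\ asymptotic to a constant times) $r^{-2}\log\log r$. Writing $r=(L^d)^n = (L^n)^d$ we have $(L^n)^{-2d} = r^{-2}$ exactly, and $\log\log(L^n) = \log(n\log L) = \log n + \log\log L$ while $\log\log r = \log(n\log L^d) = \log n + \log\log L^d$; these differ only by an additive constant, so $\log\log(L^n) \sim \log\log r$ as $n\to\infty$. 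Hence $J(L^n) \sim a L^d (L^n)^{-2d}\log\log(L^n)$ on $\H^d_L$ translates to $\tilde J(r) \sim a L^d r^{-2}\log\log r$ on $\H^1_{L^d}$, i.e.\ exactly the $d=1$ version of Theorem~\ref{theo:existence} with $L$ there equal to $L^d$ and the same constant $a$ (note $L^d = (L^d)^1$ is precisely the prefactor $L^{d'}$ appearing in the $d'=1$ statement with side length $L^d$). The conditions $\cJ\gg n^{-2d}\log\log n$, $\cJ\ll n^{-2d}\log\log n$, $\cJ\approx n^{-2d}\log\log n$, and the limit $\beta = \lim L^{-d}\cJ(\lambda_c,e)|e|^{2d}(\log\log|e|)^{-1}$ all transform identically, with the $L^{-d}$ normalization becoming the $(L^d)^{-1}$ normalization of the $d=1$ statement. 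The same reductions apply to the mixed site-bond model in Theorem~\ref{theo:existence mixed} since the vertex-retention step commutes with $\phi$.

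The main (minor) obstacle is purely bookkeeping: making sure the normalizing prefactors $L^{-d}$ versus $(L^d)^{-1}$, and the $\log\log$ additive constants, line up so that the constant $a$, the density $\theta(\lambda_c)$, and the quantity $\beta$ are genuinely unchanged rather than rescaled; once the isometry $\phi$ and the relation $r = (L^n)^d$ are written down explicitly this is immediate, and there is no probabilistic content beyond invoking the $d=1$ theorems for the lattice $\H^1_{L^d}$. I would present this as a short lemma establishing the isometry, followed by a one-paragraph verification that each of the four theorems' hypotheses and conclusions is invariant.
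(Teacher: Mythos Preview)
Your proposal is correct and takes essentially the same approach as the paper: identify $\H_L^d$ with $\H_{L^d}^1$ via a block-respecting bijection so that $\|\phi(x)-\phi(y)\| = \|x-y\|^d$, and then observe that $(L^n)^{-2d}\log\log(L^n)$ translates to $r^{-2}\log\log r$ up to a $1+o(1)$ factor, with the prefactor $L^d$ becoming $(L^d)^1$. The paper gives this argument in a single paragraph without spelling out the preservation of properness or the site-bond model, but your more detailed bookkeeping is accurate and matches the intended reduction.
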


This follows from the fact that the space $\H_L^d$ can be identified with the space $\H_{L^d}^1$ by a map $\phi: \H_L^d \to \H_{L^d}^{1}$ such that $\|\phi(x)-\phi(y)\| = \|x-y\|^{d}$ for all $x,y \in \H_{L}^{d}$. In particular, the model on $\H_L^d$ with kernel 
\[
J(x,y)=\frac{a L^d \log\log \|x-y\|}{\|x-y\|^{2d}} \vee 0
\]
is equivalent to the model on $\H_{L^d}^1$ with kernel
\[
\tilde J(u,v)=\frac{a L^d \log\log (\|u-v\|^{1/d})}{\|u-v\|^{2}} \vee 0 = (1+o(1)) \frac{a L^d \log\log \|u-v\|}{\|u-v\|^{2}}
\]
as $\|u-v\|\to \infty$.
We can therefore restrict to the case $d=1$ in the rest of this paper, which will lighten notation. We will also just write $\H_L$ instead of $\H_L^1$.

\section[Existence of a phase transition for large a]{Existence of a phase transition for large $a$}\label{sec:large a}

In this section, we prove a weaker version of Theorem \ref{theo:existence mixed}.

\begin{theorem}\label{theo:existence large a}
    For all $L\in \N$ and $a>200 L$ there exist $q\in (0,1)$ and an isometry-invariant kernel $J$ on $\mathbb{H}_L^1$ satisfying 
	\begin{align*}
		J \sim \frac{a L \log\log(n)}{n^{2}}
	\end{align*}
	such that $\p_{q,J} \left( |K_0| = \infty \right)> 0$.
\end{theorem}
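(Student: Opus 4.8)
The plan is to deduce the statement from a multi-scale construction of an infinite cluster. Since $(\H_L,J)$ is an amenable transitive weighted graph, it suffices to produce $q\in(0,1)$ and a kernel $J\sim aL\log\log(n)/n^2$ for which $\p_{q,J}$ puts positive mass on the event that an infinite cluster exists; by ergodicity of the group action this event then has full measure, so $\p_{q,J}(|K_0|=\infty)>0$. Only the large-scale behaviour of $J$ is prescribed, so I will take $J(e):=aL\log\log(|e|)/|e|^2$ for $|e|>L^N$ and $J(e):=N_0$ for $|e|\le L^N$, and fix the cutoff scale $N$, the short-range weight $N_0$, and the retention probability $q$ (close to $1$) at the end. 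With $N_0$ large the open vertices of a level-$N$ block $\Lambda_N$ span a single cluster of size $\ge\tfrac12 qL^N$ with probability $\ge 1-g_N$, where $g_N$ can be made as small as we please; this is the initial input.

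Fix a base level-$N$ block $S_0$ (the one containing the origin, say), and for $k\ge N$ let $C_k$ be the open cluster of $\Lambda_k$ containing the largest cluster of $S_0$. By construction $C_N\subseteq C_{N+1}\subseteq\cdots$, so it is enough to show that, with positive probability, $|C_k|\ge\tfrac12 qL^k$ for all large $k$, since then $\bigcup_k C_k$ is an infinite cluster. Now $\Lambda_k$ is partitioned into $L^{k-N}$ disjoint copies $S$ of $\Lambda_N$, and I will bound $|C_k|\ge\sum_{S}|\mathrm{lgst}(S)|\,\mathbbm 1[S\hookrightarrow C_k]$, where $\mathrm{lgst}(S)$ is the largest cluster of $S$ and $S\hookrightarrow C_k$ denotes the event $\mathrm{lgst}(S)\subseteq C_k$. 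The key observation is that $S\hookrightarrow C_k$ holds unless some \emph{merge} fails along the path in the hierarchical tree joining $S$ and $S_0$ to $\Lambda_k$, where a merge at a level-$j$ block $B$ is the event that the largest clusters of the $L$ level-$(j-1)$ sub-blocks of $B$ all lie in one common cluster of $B$.

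Two features of these merges drive the argument. First, merges at distinct level-$j$ blocks depend on disjoint sets of edges and are therefore independent. Second, a merge at a level-$j$ block $B$ fails with small probability: if each of its $L$ sub-block giants has size $\ge\tfrac12 qL^{j-1}$, then any two of them are joined by a $\mathrm{Bin}\!\big(\ge\tfrac14 q^2L^{2(j-1)},\,1-e^{-J(L^j)}\big)$ number of length-$L^j$ edges, with mean at least $\tfrac{1}{8L}q^2a\log\log(L^j)\ge\tfrac{1}{8L}q^2a\log j\to\infty$ --- this is exactly where the hypothesis $a>200L$ enters, making the mean large --- so the probability that some pair is not directly joined is at most $\binom{L}{2}j^{-c}$ with $c:=q^2a/(8L)$, which exceeds $2$ and can be made as large as we like by pushing $q$ towards $1$. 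Consequently, writing $g_{j-1}$ for the probability that a level-$(j-1)$ block contains no cluster of size $\ge\tfrac12 qL^{j-1}$, the probability $\varepsilon_j$ that a level-$j$ merge fails satisfies $\varepsilon_j\le Lg_{j-1}+\binom{L}{2}j^{-c}$, and hence $\p(S\not\hookrightarrow C_k)\le 2\sum_{j=N+1}^k\varepsilon_j$.

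Since the $|\mathrm{lgst}(S)|$ are i.i.d.\ over the $L^{k-N}$ disjoint base sub-blocks (and equal to $qL^N$ up to lower order with overwhelming probability), combining this with the two points above in a second-moment computation gives $\mathbb E[|C_k|]\ge (1-o(1))\big(1-2\textstyle\sum_j\varepsilon_j\big)qL^k$ and $\mathrm{Var}(|C_k|)=o(1)\,(qL^k)^2$, the variance error being summable over $k$ (of order $L^{-(k-N)}$ from the independence of the base sub-blocks, plus $L^2k^{-c}$ from the independence and rarity of the merge failures). Chebyshev then yields $g_k\le O\!\big(L^{-(k-N)}+L^2k^{-c}\big)$ for all large $k$; feeding this back into $\varepsilon_j\le Lg_{j-1}+\binom L2 j^{-c}$ and choosing an increment $N'-N$ large (first), then $N$ and $N_0$ large, closes the estimates and makes $\sum_{j}\varepsilon_j$ small, whereupon Borel--Cantelli gives $|C_k|\ge\tfrac12 qL^k$ for all $k\ge N'$ on an event of probability $\ge 1-\sum_{k\ge N'}g_k>0$, completing the proof. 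The main obstacle is precisely making these self-referential estimates consistent: a naive scale-by-scale recursion for $g_k$ only gives $g_{k+1}\le Lg_k+L^2k^{-c}$, and this fatal factor $L$ (a level-$(k+1)$ block fails as soon as one of its $L$ sub-blocks does) cannot be absorbed no matter how small $g_N$ is; the resolution is to realise $C_k$ in one stroke as a sum of $L^{k-N}$ nearly-independent base contributions, cut down only by the sparse and independent merge failures, and the bulk of the section is the careful second-moment bound that makes this work, with the generous constant $200L$ leaving ample room in all the estimates.
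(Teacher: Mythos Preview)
Your proposal correctly identifies the central obstacle --- the naive recursion $g_{k+1}\le Lg_k+O(k^{-c})$ carries a fatal factor $L$ --- but the second-moment fix you propose does not overcome it. Even granting independence of merge events across levels (itself dubious, since the merge at level $j$ depends on the identity of the sub-block giants and hence on edges at all lower levels), the covariance between $\mathbbm{1}[S\hookrightarrow C_k]$ and $\mathbbm{1}[S'\hookrightarrow C_k]$ when $S,S'$ first share an ancestor at level $m$ is at most $\sum_{j\ge m}\varepsilon_j$. Summing over all pairs and swapping the order of summation gives a relative variance of order $\sum_{j=N+1}^{k}\varepsilon_j\,L^{j-k}$, which is dominated by the top term $j=k$ and hence is of order $\varepsilon_k$ itself. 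Chebyshev then yields only $g_k\le C\,\varepsilon_k\le C\bigl(Lg_{k-1}+L^2k^{-c}\bigr)$ for a universal constant $C$ --- the very recursion you were trying to escape. The circularity between $g_j$ and $\varepsilon_j$ is therefore not broken: the dominant variance contribution comes from the single shared merge at the highest level, whose failure probability is precisely $\varepsilon_k$, and no choice of $N$, $N'$, or $N_0$ removes the multiplicative factor in front of $g_{k-1}$.

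The paper takes a different route: instead of stepping one level at a time, it jumps $r_m$ levels at step $m$, where $L^{r_m}\approx m^2$, so that $\Lambda_{R_{m+1}}$ contains roughly $(m+1)^2$ disjoint sub-blocks of type $\Lambda_{R_m}$. Declaring $\Lambda_{R_{m+1}}$ bad only when at least \emph{two} of these sub-blocks are bad (a single bad sub-block is absorbed, since the remaining good ones still give density $\ge\theta_{m+1}$) produces a \emph{squaring} renormalization inequality
\[
b_{m+1}\ \le\ L^2(m+1)^4\,b_m^2\;+\;(m+1)^2L^3\,m^{-a/(10L)},\qquad b_m:=\p_{q,J}\bigl(\Lambda_{R_m}\text{ bad}\bigr).
\]
The quadratic term $b_m^2$ swallows the polynomial prefactor, and the induction $b_m\le m^{-7}$ closes easily for $a>200L$. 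This squaring mechanism --- tolerating one defect among many independent sub-blocks so that two independent failures are needed --- is the idea your sketch is missing.
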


The difference to Theorem \ref{theo:existence mixed} is that Theorem \ref{theo:existence large a} only guarantees the existence of a phase transition when the constant $a$ defined above is at least $200 L$, whereas Theorem \ref{theo:existence} guarantees the existence of a phase transition for all constants $a>1$. We prove this weaker version now, as it is sufficient 
for the proofs of \Cref{theo:(dis)continuity} and its proof is much easier than the proof of Theorem \ref{theo:existence mixed}

\medskip

Before starting the proof, we first introduce some notation. Given $A \subseteq \mathbb{H}^1_L$ and $x,y \in A$, we write $x \overset{A}{\longleftrightarrow} y$ if there exists an open path connecting $x$ to $y$ inside $A$. We write $K_x(A) = \{y \in A : x \overset{A}{\longleftrightarrow} y \}$ for the open cluster containing $x$ in $A$ and write $|K_{\max}(A)| = \max_{x \in A} |K_x(A)|$ for the size of the largest cluster inside $A$. We also define $K_{\max}(A)$ as the vertex set of the largest cluster inside $A$; in case this cluster is not unique, we use an arbitrary deterministic tie-breaking rule depending only on the configuration inside of $A$. 

\medskip

We now begin working towards the proof of Theorem \ref{theo:existence large a}. Fix $L\in \N$ and $a>200 L$
and let $M > L^2$ be a sufficiently large integer such that $1-\sum_{m=M}^{\infty} \frac{1}{m^2} \geq 0.9$ and $M^{2} \geq 2^{14}$. For each $m \geq M$, we define
    \begin{equation*}
        \theta_{m} = 1-\sum_{k=M}^{m} \frac{1}{k^2} \geq 0.9 .
    \end{equation*}
    Let $r_m$ be the unique integer such that $m^{2} \leq L^{r_m} < L m^{2}$,
    and define $R_{m}$ by
        $R_{m} = M + \sum_{i=M}^{m} r_{i}$.

\begin{definition}
    We say that a block of the form $\Lambda_{R_m}(u)$ is  \emph{good} if $\left|K_{\max}\left( \Lambda_{R_m}(u) \right)\right| \geq \theta_{m} \left| \Lambda_{R_m}(u) \right|$, and \emph{bad} otherwise.
\end{definition}

The proof of Theorem \ref{theo:existence large a} is based on the idea that clusters of density $\theta_m$ inside blocks of the form $\Lambda_{R_{m}}(u)$ with $u\in \Lambda_{R_{m+1}}(0)$ will, with high probability, merge and create a cluster of density $\theta_{m+1}$ inside the block $\Lambda_{R_{m+1}}(0)$. 
The main input for the proof of Theorem \ref{theo:existence large a} is the following renormalization inequality.

\begin{lemma}\label{lem:renorm ineq}
    Let $J$ be a kernel on $\H_L$ such that $J(L^{n}) \geq a L^{-2n+1} \log n$ for all $n > R_{m}$. Then 
    \begin{equation*}
         \p_{q,J} \left( \Lambda_{R_{m+1}}(0) \text{ \emph{is bad}} \right) \leq L^2 (m+1)^4 \p_{q,J} \left( \Lambda_{R_{m}}(0) \text{ \emph{is bad}} \right)^2 + (m+1)^2 L^3 m^{-\frac{a}{10 L}} .
    \end{equation*}
\end{lemma}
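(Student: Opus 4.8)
The basic strategy is a two-scale renormalization estimate: the block $\Lambda_{R_{m+1}}(0)$ is the disjoint union of $L^{r_{m+1}}$ sub-blocks of the form $\Lambda_{R_m}(u)$, and we want to show that if enough of these sub-blocks are good then their large clusters will, with high probability, be glued together by long edges of length $\approx L^{R_{m+1}}$ into a single cluster occupying density $\theta_{m+1}$ of $\Lambda_{R_{m+1}}(0)$. Thus the event $\{\Lambda_{R_{m+1}}(0) \text{ is bad}\}$ is contained in the union of two events: (i) there are at least two ``bad'' sub-blocks $\Lambda_{R_m}(u), \Lambda_{R_m}(u')$ in $\Lambda_{R_{m+1}}(0)$ (the analogue of the standard ``two disjoint bad boxes'' bound in renormalization), and (ii) all but at most one sub-block is good, but the large clusters of the good sub-blocks fail to merge into a cluster of the required density. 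The first event is handled by a union bound over the $\binom{L^{r_{m+1}}}{2} \le L^{2 r_{m+1}} \le L^2 (m+1)^4$ pairs of sub-blocks, together with the BK/van den Berg--Kesten inequality (or just independence of the configurations in disjoint sub-blocks, since ``badness'' of $\Lambda_{R_m}(u)$ is measurable with respect to edges inside that block) to get the square $\p_{q,J}(\Lambda_{R_m}(0)\text{ is bad})^2$. This gives the first term $L^2(m+1)^4 \p_{q,J}(\Lambda_{R_m}(0)\text{ is bad})^2$.

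For the second event, condition on the configuration inside all the sub-blocks, on the (good) event that at most one sub-block is bad. Then each of the at least $L^{r_{m+1}}-1$ good sub-blocks $\Lambda_{R_m}(u)$ contains a distinguished cluster $K_{\max}(\Lambda_{R_m}(u))$ of size at least $\theta_m |\Lambda_{R_m}| = \theta_m L^{R_m}$. Now reveal the status of the edges of length exactly $L^{R_{m+1}}$ connecting different sub-blocks. For two good sub-blocks with distinguished clusters of size $\ge \theta_m L^{R_m}$ each, the probability that no open edge of length $L^{R_{m+1}}$ joins the two clusters is at most
\[
\exp\!\left(-(\theta_m L^{R_m})^2 J(L^{R_{m+1}})\right) \le \exp\!\left(-\theta_m^2 L^{2R_m} \cdot a L^{-2R_{m+1}+1}\log R_{m+1}\right),
\]
using the hypothesis $J(L^n)\ge aL^{-2n+1}\log n$ with $n = R_{m+1} = R_m + r_{m+1}$. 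Since $L^{r_{m+1}} \ge m^2 \cdot$(roughly) one has $L^{2R_m - 2R_{m+1}+1} = L^{1-2r_{m+1}}$, and $L^{r_{m+1}} < L(m+1)^2$ so $L^{1-2r_{m+1}} \ge L^{-1}(m+1)^{-4}$ up to constants; combined with $\log R_{m+1} \gtrsim \log m$ and $\theta_m^2 \ge 0.81$, this connection-failure probability is at most $m^{-c a/L}$ for an absolute constant $c$ — one needs to check the bookkeeping gives the stated exponent $a/(10L)$. Treating the good sub-blocks in sequence and merging each new one to the growing super-cluster (the super-cluster size only helps, so one can lower-bound by the single-block size $\theta_m L^{R_m}$ at each step), a union bound over the $\le L^{r_{m+1}} \le (m+1)^2 L$ merge steps, times an extra factor absorbing the ``which sub-block is the bad one / exceptional'' choice ($\le L^{r_{m+1}} \le (m+1)^2 L$), produces the second term $(m+1)^2 L^3 m^{-a/(10L)}$. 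Finally one checks that once all good sub-blocks are merged, the resulting cluster has size at least $(L^{r_{m+1}}-1)\theta_m L^{R_m} \ge \theta_{m+1} L^{R_{m+1}}$, using $\theta_{m+1} = \theta_m - (m+1)^{-2}$ and $(L^{r_{m+1}}-1)/L^{r_{m+1}} \ge 1 - L^{-r_{m+1}} \ge 1 - m^{-2} \ge \theta_{m+1}/\theta_m$ since $\theta_m \le 1$; so $\Lambda_{R_{m+1}}(0)$ is good.

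The main obstacle is the second event: making the merging argument quantitative while keeping the combinatorial prefactors under control and extracting exactly the exponent $a/(10L)$. The subtle points are (a) correctly choosing which edges to reveal so that the merges of successive good sub-blocks are handled by \emph{independent} (or at least conditionally dominated) randomness — revealing only length-$L^{R_{m+1}}$ edges and arguing sequentially works, since conditioning on the clusters inside sub-blocks leaves these top-scale edges independent; (b) tracking the inequality $\theta_m^2 L^{1-2r_{m+1}} \log R_{m+1} \gtrsim \tfrac{a}{10L}\log m$ carefully, using $M^2 \ge 2^{14}$ and $M > L^2$ to kill lower-order terms and to ensure $\log R_{m+1}$ is comparable to $\log m$; and (c) confirming the density arithmetic $\theta_{m+1}\le (1-L^{-r_{m+1}})\theta_m$ closes exactly. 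The first event and the prefactor $L^2(m+1)^4$ are routine. I expect the whole proof to run 1–2 pages with the constant-chasing in (b) being the only genuinely fiddly part.
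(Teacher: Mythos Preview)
Your decomposition into ``at least two bad sub-blocks'' versus ``at most one bad sub-block but merging fails'' is correct and matches the paper. The first term is handled exactly as you say.

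The second term, however, has a genuine gap. You propose to connect the large clusters of the $L^{r_{m+1}}$ good sub-blocks at level $R_m$ directly, using edges of length $L^{R_{m+1}}$, and you lower-bound the target cluster by $\theta_m L^{R_m}$ at each merge step. But the exponent you compute,
\[
\theta_m^2\, L^{2R_m}\cdot a\,L^{-2R_{m+1}+1}\log R_{m+1}
\;=\; a\,\theta_m^2\, L^{1-2r_{m+1}}\log R_{m+1},
\]
does \emph{not} give a failure probability of order $m^{-ca/L}$. Since $L^{r_{m+1}}\ge (m+1)^2$ you have $L^{1-2r_{m+1}}\le L(m+1)^{-4}$, and with $\log R_{m+1}\asymp \log m$ the exponent is $O\bigl((m+1)^{-4}\log m\bigr)\to 0$. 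Hence the single-pair failure probability tends to $1$, not to $m^{-ca/L}$; you have silently dropped the $(m+1)^{-4}$ when passing to the claimed bound. Even if you track the growing super-cluster size $S_k$ rather than lower-bounding by $\theta_m L^{R_m}$, the step where $S_k\approx \theta_m L^{R_{m+1}-1}$ must connect across distance $L^{R_{m+1}}$ to a cluster of size only $\theta_m L^{R_m}$, and the exponent is then $a\theta_m^2 L^{-r_{m+1}}\log R_{m+1}\to 0$. The top-scale edges are simply too sparse to attach a single $L^{R_m}$-sized cluster.

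What is missing is the hierarchical merging through the intermediate scales $R_m+1,\ldots,R_{m+1}$: at level $R_m+j$ you must first have merged the $L$ children into a cluster of size $\asymp L^{R_m+j-1}$ \emph{before} connecting across distance $L^{R_m+j}$. With both sides of size $\ge 0.4\,L^{R_m+j-1}$, the non-connection probability is $\exp\bigl(-0.16\,a\,L^{-1}\log(R_m+j)\bigr)\le m^{-a/(10L)}$, uniformly in $j$. The paper formalizes this by declaring each intermediate block $\Lambda_{R_m+j}(u)$ ``nice'' when all its children's large clusters are pairwise joined, and then union-bounds over the $\sum_{j=1}^{r_{m+1}} L^{r_{m+1}-j}\le L^{r_{m+1}}\le L(m+1)^2$ intermediate blocks, times $L^2$ pairs of children per block, to get the stated $(m+1)^2 L^3 m^{-a/(10L)}$. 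A separate deterministic claim then confirms that if all intermediate blocks are nice and at most one $R_m$-sub-block is bad, the final density is at least $(1-L^{-r_{m+1}})\theta_m\ge \theta_{m+1}$, exactly as in your closing arithmetic. Your overall architecture is right; you just need to run the merging one scale at a time rather than in a single shot.
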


Let us first see how this lemma implies Theorem \ref{theo:existence large a}.

\begin{proof}[Proof of Theorem \ref{theo:existence large a} given Lemma \ref{lem:renorm ineq}]
    Recall that the value of $a> 200 L$ was fixed. Let $J$ be the kernel defined by 
    \begin{equation*}
        J(L^n) = \begin{cases}
            \frac{a L \log(n)}{L^{2n}} & \text{ if } n > M
            \\
            N & \text{ if } n \leq M
        \end{cases},
    \end{equation*}
    which satisfies $J \sim aL n^{-2} \log\log n$.
    For each $m \geq M$, define
    $b_m \coloneqq \p_{q,J} \left( \Lambda_{R_{m}} (0) \text{ bad} \right)$.
    Let $q \in (0,1)$ and $N \in [0,\infty)$ be large enough so that $b_M \leq M^{-7}$. We will now show that $b_m \leq m^{-7}$ by induction over all $m \geq M$. The base case $m=M$ holds by assumption. For the induction step, we use Lemma \ref{lem:renorm ineq} and the assumptions $m \geq M > L^2, a > 200 L$ to see that
    \begin{align*}
        b_{m+1} & \leq L^2 (m+1)^4 b_{m}^2 + (m+1)^2 L^3 m^{-\frac{a}{10L}}
        \leq 2^4 m^5 b_{m}^2 + 2 m^5 m^{-\frac{a}{10L}}
        \\
        &
        \leq 2^4 m^5 \left( \frac{1}{m^7} \right)^2  + 2 m^5 m^{-20}
        \leq 2^5 m^{-9} \leq  2^{14} (m+1)^{-9} \leq (m+1)^{-7},
    \end{align*}
    where we used the assumption $(m+1)^2 \geq M^2 \geq 2^{14}$ for the last inequality. 

    In particular, the previous argument implies that $1-b_{m} = \p_{q,J}\left( \Lambda_{R_m}(0) \text{ is good} \right) \geq 0.5$ for all $m \geq M$. The isometry-invariance of the kernel $J$ thus implies that
    \begin{multline*}
        \p_{q,J} \left( |K_0| \geq 0.9 |\Lambda_{R_{m}}| \right) 
        \geq 
        \p_{q,J} \left( \Lambda_{R_m}(0) \text{ good}, 0 \in K_{\max}\left(\Lambda_{R_m}(0) \right) \right)
        \\
        \geq
        0.9 \ \p_{q,J} \left( \Lambda_{R_m}(0) \text{ good} \right) = 0.9 (1-b_m) \geq 0.45,
    \end{multline*}
    and since $|\Lambda_{R_{m}}(0)|$ diverges as $m \to \infty$, we get that
    \begin{equation*}
        \p_{q,J} \left( |K_0| = \infty \right) = \lim_{m \to \infty} \p_{q,J} \left( |K_0| \geq 0.9 |\Lambda_{R_{m}}(0)| \right) \geq 0.45 > 0 
    \end{equation*}
    as required.
\end{proof}

We now proceed to the proof of Lemma \ref{lem:renorm ineq}.

\begin{proof}[Proof of Lemma \ref{lem:renorm ineq}]
    For each $j \in \{0,1,\ldots,r_{m+1}\}$, let $u_1^{j},\ldots, u_{L^{r_{m+1}-j}}^{j} \in \H_L$ be such that
    \begin{equation*}
        \bigcup_{i=1}^{L^{r_{m+1}-j}} \Lambda_{R_{m}+j}(u_i^j) = \Lambda_{R_{m+1}}(0)
    \end{equation*}
    and write $U_j \coloneqq \{ u_1^{j},\ldots, u_{L^{r_{m+1}-j}}^{j} \}$.
    
    For $j \in \{1,\ldots,r_{m+1}\}$ and $u \in U_{j}$, let $v_1,\ldots,v_L \in \Lambda_{R_{m}+j}(u)$ be such that $\Lambda_{R_{m}+j}(u) = \bigcup_{i=1}^{L} \Lambda_{R_{m}+j-1}(v_i)$, so  that the blocks $\Lambda_{R_{m}+j-1}(v_i)$ and $ \Lambda_{R_{m}+j-1}(v_{i^\prime})$ are disjoint for all distinct $i,i^\prime \in \{1,\ldots,L\}$. We say that the block $\Lambda_{R_{m}+j}(u)$ is \emph{nice} if 
        $ K_{\max}\left( \Lambda_{R_{m}+j-1}(v_i) \right) \sim K_{\max}\left( \Lambda_{R_{m}+j-1}(v_{i^\prime}) \right)$ (meaning that there is an open edge connecting the two sets) for all distinct $i,i^\prime \in \{1,\ldots,L\} $ for which $ |K_{\max}\left( \Lambda_{R_{m}+j-1}(v_i) \right)| , |K_{\max}\left( \Lambda_{R_{m}+j-1}(v_{i^\prime}) \right)| \geq 0.4 L^{R_{m}+j-1}$.
    We see that the definition of $\Lambda_{R_{m}+j}(u)$ being nice does not depend on the exact choice of $v_1,\ldots,v_{L}$.

    For distinct $v_{i}, v_{i^\prime}$ as above and each pair of vertices $x,y$ with $x \in K_{\max}\left( \Lambda_{R_{m}+j-1}(v_{i}) \right)$, $ y \in K_{\max}\left( \Lambda_{R_{m}+j-1}(v_{i^\prime}) \right)$ one has $\|x-y\| = L^{R_m+j}$. Thus, for any two sets $A\subseteq \Lambda_{R_{m}+j-1}(v_{i}), B \subseteq \Lambda_{R_{m}+j-1}(v_{i^\prime})$ one has
    \begin{equation*}
        \p_{q,J} \left( A \nsim B \big| v \text{ open for all } v \in A\cup B \right) = \prod_{x \in A} \prod_{y \in B} e^{-J(\{x,y\})} = \exp \left( - J\!\left( L^{R_{m}+j} \right) |A| |B| \right) .
    \end{equation*}
    Writing $\mathcal{B}_{i,i^\prime}$ for the event that $\left| K_{\max}\left( \Lambda_{R_{m}+j-1}(v_{i^\prime}) \right)\right|,  \left| K_{\max}\left( \Lambda_{R_{m}+j-1}(v_{i}) \right) \right| \geq 0.4 L^{R_{m}+j-1}$ and noting that this event is independent of the status of the edges between the two blocks, we see that
    \begin{align*}
        &\p_{q,J} \left( K_{\max}\left( \Lambda_{R_{m}+j-1}(v_{i}) \right) \nsim K_{\max}\left( \Lambda_{R_{m}+j-1}(v_{i^\prime}) \right) \big| \mathcal{B}_{i,i^\prime} \right) 
        \\&\hspace{1.3cm}\leq
        \exp \left( - J\!\left( L^{R_{m}+j} \right) \left( 0.4 L^{R_{m}+j-1} \right)^2 \right)
        \leq
        \exp \left( - \frac{a L \log \left( R_{m}+j \right)}{\left(L^{R_{m}+j}\right)^2} \left( 0.4 L^{R_{m}+j-1} \right)^2 \right)
        \\&\hspace{1.3cm} =
        \exp \left( - \frac{0.16 a   \log \left( R_{m}+j \right)}{L} \right)
        \leq 
        m^{-\frac{ a}{10 L}} ,
    \end{align*}
    where we used that $R_{m}+j \geq R_{m} \geq m$ in the last inequality. A union bound over all distinct $i,i^\prime \in \{1,\ldots,L\}$ therefore shows that
    \begin{align*}
        & \p_{q,J} \left( \Lambda_{R_{m}+j}(u) \text{ not nice} \right)
        \\
        &
        \hspace{1cm}\leq \sum_{\substack{i,i^\prime \in \{1,\ldots,L\}: \\ i \neq i^\prime }} \p_{q,J} \left( K_{\max}\left( \Lambda_{R_{m}+j-1}(v_{i}) \right) \nsim K_{\max}\left( \Lambda_{R_{m}+j-1}(v_{i^\prime}) \right) \big| \mathcal{B}_{i,i^\prime} \right) \p_{q,J} \left( \mathcal{B}_{i,i^\prime} \right)
        \\
        &
        \hspace{1cm}\leq \sum_{\substack{i,i^\prime \in \{1,\ldots,L\}: \\ i \neq i^\prime }} \p_{q,J} \left( K_{\max}\left( \Lambda_{R_{m}+j-1}(v_{i}) \right) \nsim K_{\max}\left( \Lambda_{R_{m}+j-1}(v_{i^\prime}) \right) \big| \mathcal{B}_{i,i^\prime} \right)
        \\
        &
        \hspace{1cm}\leq L^2 m^{-\frac{a}{10L}}.
    \end{align*}
    A union bound over all $u \in U_{j}$ and $j \in \{1,\ldots,r_{m+1}\}$ then   shows that
    \begin{align}\label{not nice}
        & \notag \p_{q,J} \left( \text{there exists } j \in \{1,\ldots,r_{m+1}\} \text{ and } u \in U_j \text{ such that } \Lambda_{R_{m}+j}(u) \text{ is not nice} \right)
        \\
        & \hspace{2cm}
        \leq \notag
        \sum_{j = 1}^{r_{m+1}} \sum_{u \in U_j}
        \p_{q,J} \left( \Lambda_{R_{m}+j}(u) \text{ is not nice} \right)
        \\
        & \hspace{2cm}
        \leq
        \sum_{j = 1}^{r_{m+1}} L^{r_{m+1}-j} L^2 m^{-\frac{a}{10 L}} 
        \leq 
        L^{r_{m+1}} L^2 m^{-\frac{a}{10L}}
        \leq
        L (m+1)^2 L^2 m^{-\frac{ a}{10 L}} .
    \end{align}
    We write $\mathscr{N}_m$ for the event whose probability is estimated in \eqref{not nice}.

    To proceed, 
    we divide the set $U_0 = U_0^G \cup U_0^B$ into its \emph{good part} $U_0^G$ and its \emph{bad part} $U_0^B$, which are defined by
    \begin{equation*}
        U_0^G \coloneqq \left\{ u \in U_0 : \Lambda_{R_{m}}(u) \text{ good} \right\} \quad \text{ and } \quad U_0^B \coloneqq \left\{ u \in U_0 : \Lambda_{R_{m}}(u) \text{ bad} \right\} = U_0 \setminus U_0^G ,
    \end{equation*}
    respectively.
    We will use the following deterministic claim, whose proof is deferred to the end of the section:

    \begin{claim}\label{claim:niceness}
        If $|U_0^B| \leq 1$ and all blocks of the form $\Lambda_{R_{m}+j}(u)$ for $u \in U_j, j \in \{1,\ldots,r_{m+1}\}$ are nice, then $\left| K_{\max}\left( \Lambda_{R_{m+1}}(0) \right) \right| \geq \theta_{m+1} \left| \Lambda_{R_{m+1}}(0) \right|$.
    \end{claim}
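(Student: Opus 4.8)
The plan is to push a cluster–density lower bound upward through the $r_{m+1}$ scales separating $\Lambda_{R_m}$ from $\Lambda_{R_{m+1}}(0)$, treating the at most one bad sub-block as a defect whose influence stays confined to a single ``thread'' of blocks. Set up notation as follows: if $U_0^B=\{u^\star\}$, write $W_0=\Lambda_{R_m}(u^\star)$ for the unique bad block and, for each $j\in\{1,\ldots,r_{m+1}\}$, let $W_j$ be the unique block $\Lambda_{R_m+j}(u)$, $u\in U_j$, that contains $W_0$; call $W_0,\ldots,W_{r_{m+1}}$ \emph{tainted} and all other blocks $\Lambda_{R_m+j}(u)$ \emph{clean}. (If $U_0^B=\emptyset$ there are no tainted blocks and the argument below only simplifies.) Recall that $\Lambda_{R_m+j}(u)$ is a union of $L$ disjoint sub-blocks at level $R_m+j-1$, that $R_{m+1}=R_m+r_{m+1}$, that $W_{r_{m+1}}=\Lambda_{R_{m+1}}(0)$, and that $|\Lambda_n|=L^n$ (recall $d=1$).

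\textbf{Step 1 (clean blocks stay dense).} By induction on $j$ I would show that every clean block $\Lambda_{R_m+j}(u)$ satisfies $|K_{\max}(\Lambda_{R_m+j}(u))|\geq \theta_m L^{R_m+j}$. The base case $j=0$ is exactly the definition of a good block. For the inductive step, each sub-block of a clean block is clean, hence by the inductive hypothesis has largest cluster of size at least $\theta_m L^{R_m+j-1}\geq 0.9\,L^{R_m+j-1}>0.4\,L^{R_m+j-1}$; niceness of the parent block then forces all $L$ of these largest clusters into a single cluster, of size at least $L\,\theta_m L^{R_m+j-1}=\theta_m L^{R_m+j}$.

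\textbf{Step 2 (the tainted thread recovers).} By induction on $j\in\{1,\ldots,r_{m+1}\}$ I would show $|K_{\max}(W_j)|\geq \theta_m\bigl(L^{R_m+j}-L^{R_m}\bigr)$. For $j=1$: the $L-1$ clean sub-blocks of $W_1$ each contribute a largest cluster of size at least $\theta_m L^{R_m}>0.4\,L^{R_m}$ by Step 1, so niceness of $W_1$ merges them into a cluster of size at least $(L-1)\theta_m L^{R_m}=\theta_m(L^{R_m+1}-L^{R_m})$, discarding any contribution of $W_0$. For the step $j\mapsto j+1$: the $L-1$ clean sub-blocks of $W_{j+1}$ again contribute clusters of size at least $\theta_m L^{R_m+j}>0.4\,L^{R_m+j}$, and the tainted sub-block $W_j$ has, by the inductive hypothesis, $|K_{\max}(W_j)|\geq \theta_m L^{R_m+j}(1-L^{-j})>0.4\,L^{R_m+j}$, since $\theta_m\geq 0.9$ and $L\geq 2$ give $\theta_m(1-L^{-j})\geq \theta_m/2\geq 0.45$. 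Hence all $L$ largest clusters among the sub-blocks of $W_{j+1}$ lie above the niceness threshold and merge, yielding $|K_{\max}(W_{j+1})|\geq \theta_m(L^{R_m+j}-L^{R_m})+(L-1)\theta_m L^{R_m+j}=\theta_m(L^{R_m+j+1}-L^{R_m})$, which closes the induction. Taking $j=r_{m+1}$ and using $W_{r_{m+1}}=\Lambda_{R_{m+1}}(0)$ and $R_{m+1}=R_m+r_{m+1}$ gives $|K_{\max}(\Lambda_{R_{m+1}}(0))|\geq \theta_m L^{R_{m+1}}(1-L^{-r_{m+1}})$. Since $\theta_m\leq 1$ and $L^{r_{m+1}}\geq (m+1)^2$ by the definition of $r_{m+1}$, we get $\theta_m L^{-r_{m+1}}\leq (m+1)^{-2}$, hence $|K_{\max}(\Lambda_{R_{m+1}}(0))|\geq \bigl(\theta_m-(m+1)^{-2}\bigr)L^{R_{m+1}}=\theta_{m+1}|\Lambda_{R_{m+1}}(0)|$, as claimed. (If $U_0^B=\emptyset$, Step 1 alone already gives $|K_{\max}(\Lambda_{R_{m+1}}(0))|\geq \theta_m L^{R_{m+1}}\geq \theta_{m+1}|\Lambda_{R_{m+1}}(0)|$.)

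The only genuinely delicate point — and the main obstacle — is verifying that the tainted block's own largest cluster climbs above the niceness threshold $0.4\,L^{R_m+j-1}$ already at the first scale $j=1$ and remains above it at every subsequent scale, so that it keeps participating in the niceness-driven merging; this is precisely where the numerical slack between $\theta_m\geq 0.9$ and $0.4$, together with $L\geq 2$, is used. Everything else is routine bookkeeping with the geometric sums and the defining inequality $L^{r_{m+1}}\geq (m+1)^2$.
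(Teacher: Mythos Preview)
Your proof is correct and essentially identical to the paper's: both split into the easy case $|U_0^B|=0$ and the case $|U_0^B|=1$, then run the same two-part induction showing that clean blocks maintain density $\theta_m$ while the single tainted thread $W_j$ has $|K_{\max}(W_j)|\geq (1-L^{-j})\theta_m L^{R_m+j}$, with the same verification that the tainted cluster crosses the $0.4$ niceness threshold from $j=1$ onward via $\theta_m(1-L^{-1})\geq 0.45$. The final passage to $\theta_{m+1}$ via $L^{r_{m+1}}\geq (m+1)^2$ is also the same.
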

    
   This claim implies in particular that
    \begin{multline}
 \p_{q,J} \left( \Lambda_{R_{m+1}}(0) \text{ is bad} \right)
        =
        \p_{q,J} \left( \left| K_{\max}\left( \Lambda_{R_{m+1}}(0) \right) \right| < \theta_{m+1} L^{R_{m+1}} \right) 
        \\
        \leq \p_{q,J} \left( \{|U_0^B| \geq 2\} \cup \mathscr{N}_m\right)
          \label{eq:2 bad or not nice} 
        \leq \p_{q,J} \left( |U_0^B| \geq 2 \right) + \p_{q,J} \left(\mathscr{N}_m \right).
    \end{multline}
    To bound the first summand in \eqref{eq:2 bad or not nice}, note that the events $\left\{u \in U_0^B\right\}$ are independent for different $u \in U_0$, and they have the same probability for all $u \in U_0$, so that
    \begin{align*}
        \p_{q,J} \left( |U_0^B| \geq 2 \right) & 
        \leq 
        \sum_{ \substack{u, u^\prime \in U_0 : \\ u \neq u^\prime} } \p_{q,J} \left( u,u^\prime \in U_0^B \right) 
        =
        \sum_{ \substack{u, u^\prime \in U_0 : \\ u \neq u^\prime} } \p_{q,J} \left( u \in U_0^B \right) \p_{q,J} \left( u^\prime \in U_0^B \right) \\
        &
        \leq |U_0|^2 \p_{q,J} \left( \Lambda_{R_{m}}(0) \text{ bad} \right)^2 
        = 
        \left( L^{r_{m+1}} \right)^2 \p_{q,J} \left( \Lambda_{R_{m}}(0) \text{ bad} \right)^2
        \\
        &
        \leq
        L^2 (m+1)^4 \p_{q,J} \left( \Lambda_{R_{m}}(0) \text{ bad} \right)^2.
    \end{align*}
    Combining this with the upper bound \eqref{not nice} and inserting it into \eqref{eq:2 bad or not nice}, we get that
    \begin{align*}
        \p_{q,J} \left( \Lambda_{R_{m+1}}(0) \text{ is bad} \right) \leq L^2 (m+1)^4 \p_{q,J} \left( \Lambda_{R_{m}}(0) \text{ bad} \right)^2 + (m+1)^2 L^3 m^{-\frac{ a}{10L}}
    \end{align*}
    as claimed.
\end{proof}

It remains to prove Claim \ref{claim:niceness}.

\begin{proof}[Proof of Claim \ref{claim:niceness}]
    We observe that if $|U_0^B| = 0$ and all blocks of the form $\Lambda_{R_{m}+j}(u)$ for $u \in U_j, j \in \{1,\ldots,r_{m+1}\}$ are nice, then $\left| K_{\max}\left( \Lambda_{R_{m+1}}(0) \right) \right| \geq \theta_m \left| \Lambda_{R_{m+1}}(0) \right| \geq \theta_{m+1} \left| \Lambda_{R_{m+1}}(0) \right|$. This readily follows by induction over $j=0,\ldots,r_{m+1}$, showing that if $u \in U_{j}$ is such that $U_0^B \cap \Lambda_{R_{m}+j}(u) = \emptyset$, then $\left| K_{\max} \left( \Lambda_{R_{m}+j}(u) \right) \right| \geq \theta_{m} \left| \Lambda_{R_{m}+j}(u) \right|$.
    
    Now suppose that $|U_0^B|= 1$, say with $U_0^B = \{a_0\}$, and that all blocks of the form $\Lambda_{R_{m}}(v)$ for $v \in U_j, j \in \{1,\ldots,r_{m+1}\}$ are nice. We now show via induction over $j \in \{1,2,\ldots,r_{m+1}\}$ that there exists $a_j \in U_j$ such that 
    \begin{align}\label{induc 1}
        \left| K_{\max} \left( \Lambda_{R_m + j} (a_j) \right) \right| \geq \left( 1- L^{-j} \right) \theta_m \left| \Lambda_{R_m + j} (0) \right| ,
    \end{align}
    and that
    \begin{align}\label{induc 2}
        \left| K_{\max} \left( \Lambda_{R_m + j} (v) \right) \right| \geq \theta_m \left| \Lambda_{R_m + j} (0) \right| 
    \end{align}
    for all $j\in \{1,\ldots,r_{m+1}\}, v \in U_j \setminus \{a_j\}$.
    (Note that $a_j$ can be chosen to be the unique element $w \in U_j$ for which $\Lambda_{R_m}(a_0) \subset \Lambda_{R_m + j} (w)$.)\\

    For the start of the induction at $j=1$, let $v_1, \ldots, v_{L-1} \in U_0$ be distinct and such that $\|a_0-v_i\| = L^{R_{m}+1}$ for all $i \in \{1,\ldots,L-1\}$. Further, let $a_1 \in U_1$ be the unique element in $U_1$ for which $a_0,v_1, \ldots, v_{L-1} \in \Lambda_{R_{m}+1}(a_1)$. As $\Lambda_{R_{m}+1}(a_1)$ was assumed to be nice, and the blocks $ \Lambda_{R_{m}}(v_1),\ldots, \Lambda_{R_{m}}(v_{L-1})$ all contain a cluster of density at least $0.4$, we get that
    \begin{align*}
        &\left|K_{\max} \left( \Lambda_{R_{m}+1}(a_1) \right) \right| \geq \sum_{i=1}^{L-1} \left| K_{\max}\left( \Lambda_{R_{m}}(v_i) \right) \right|
        \geq (L-1) \theta_{m} \left| \Lambda_{R_{m}}(0) \right|
        \\
        &
        \hspace{6cm}
        =
        \left( 1- \frac{1}{L} \right) \theta_{m} \left| \Lambda_{R_{m}+1}(0) \right|
        \geq 0.4 \left| \Lambda_{R_{m}+1}(0) \right| 
    \end{align*}
    where we used $\theta_m \geq 0.9$ for the last inequality. This shows \eqref{induc 1} for $j=1$. For all other elements $u \in U_1 \setminus \{a_1\}$, let $v_1,\ldots,v_L \in U_0$ be distinct elements of $U_1$ such that $\Lambda_{R_m}(v_i) \subseteq \Lambda_{R_m+1}(u)$, so that
    \begin{align*}
        & \left|K_{\max} \left( \Lambda_{R_{m}+1}(u) \right) \right| \geq \sum_{i=1}^{L} \left| K_{\max}\left( \Lambda_{R_{m}}(v_i) \right) \right|
        \geq L \theta_{m} \left| \Lambda_{R_{m}}(0) \right|
        =
         \theta_{m} \left| \Lambda_{R_{m}+1}(0) \right| ,
    \end{align*}
    which shows \eqref{induc 2} for $j=1$.
    For the induction step from $j$ to $j+1$, let $a_{j+1}$ be the unique element $w \in U_{j+1}$ for which $\Lambda_{R_m+j}(a_j) \subset \Lambda_{R_m + j+1}(w)$. Let $v_1,\ldots,v_{L-1} \in U_j \setminus \{a_j\}$ be distinct such that $\Lambda_{R_m+j}(v_i) \subset \Lambda_{R_m + j+1}(a_{j+1})$ for all $i\in \{1,\ldots,L-1\}$. As $\Lambda_{R_{m}+1}(a_{j+1})$ was assumed to be nice and all $\Lambda_{R_m+j}(a_j),\Lambda_{R_m+j}(v_1),\ldots,\Lambda_{R_m+j}(v_{L-1})$ all contain an open cluster of density at least $0.4$, we get that
    \begin{align*}
        &\left|K_{\max} \left( \Lambda_{R_{m}+j+1}(a_{j+1}) \right) \right| \geq \left| K_{\max}\left( \Lambda_{R_{m}+j}(a_j) \right) \right| + \sum_{i=1}^{L-1} \left| K_{\max}\left( \Lambda_{R_{m}+j}(v_i) \right) \right| 
        \\
        & \hspace{6cm}
        \geq \left( 1- \frac{1}{L^j} \right) \theta_{m} \left| \Lambda_{R_{m}+j}(0) \right| 
        +
        (L-1) \theta_{m} \left| \Lambda_{R_{m}+j}(0) \right| 
        \\
        &
        \hspace{6cm}
        =
        \left( 1- \frac{1}{L^{j+1}} \right) \theta_{m} \left| \Lambda_{R_{m}+1}(0) \right| ,
    \end{align*}
    which shows \eqref{induc 1}. For all other elements $u \in U_{j+1} \setminus \{a_{j+1}\}$, let $v_1,\ldots,v_L \in U_j$ be distinct elements of $U_j$ such that $\Lambda_{R_m+j}(v_i) \subseteq \Lambda_{R_m+j+1}(u)$, so that
    \begin{align*}
        & \left|K_{\max} \left( \Lambda_{R_{m}+1}(u) \right) \right| \geq \sum_{i=1}^{L} \left| K_{\max}\left( \Lambda_{R_{m}}(v_i) \right) \right|
        \geq L \theta_{m} \left| \Lambda_{R_{m}}(0) \right|
        =
         \theta_{m} \left| \Lambda_{R_{m}+1}(0) \right| ,
    \end{align*}
    which shows \eqref{induc 2} and thus finishes the proof of the induction step. We now apply the result of the induction, inequality \eqref{induc 2}, for $j=r_{m+1}$. Since $|U_{r_{m+1}}|=1$, the element $a_{r_{m+1}}$ satisfies $\Lambda_{R_m+r_{m+1}}(a_{r_{m+1}}) = \Lambda_{R_{m+1}}(0)$ and thus
    \begin{align*}
        & \left| K_{\max} \left( \Lambda_{R_{m+1}}(0) \right) \right|
        =
        \left| K_{\max} \left( \Lambda_{R_{m}+r_{m+1}}(a_{r_{m+1}}) \right) \right|
        \geq
        \left( 1- L^{-r_{m+1}} \right) \theta_m \left| \Lambda_{R_m + r_{m+1}} (0) \right|
        \\
        &
        \hspace{55mm}
        \geq
        \left( 1- \frac{1}{(m+1)^2} \right) \theta_m \left| \Lambda_{R_m + r_{m+1}} (0) \right|
        \\
        &
        \hspace{55mm}
        \geq
        \left( \theta_m - \frac{1}{(m+1)^2} \right) \left| \Lambda_{R_m + r_{m+1}} (0) \right|
        =
        \theta_{m+1}  \left| \Lambda_{R_{m+1}} (0) \right|
    \end{align*}
    as required.
\end{proof}

\section{The continuous regime}\label{sec:continuity}

In this section, we apply \Cref{theo:existence large a} to prove part $1$ of \Cref{theo:(dis)continuity}. We continue to use the notation $K_\mathrm{max}(\Lambda_n)$ as defined in Section \ref{sec:large a}.

\medskip

As a first step towards the continuity of the phase transition, we will prove that the largest open cluster inside $\Lambda_n$ is of order $|\Lambda_n|\p_{J} \left( |K_0| = \infty \right)$ whenever $\p_{J} \left( |K_0| = \infty \right) > 0$ and $n$ is large. We use a similar technique as in the proof of \cite[Theorem 1.5]{MR2955049}.

\begin{lemma}[Cluster sizes inside blocks]\label{lem:cluster size restricted}
	Let $J$ be an isometry-invariant, integrable, and regular kernel on $\H^1_L$. If the infinite cluster density $\theta \coloneqq \p_{J} \left( |K_0| = \infty \right)$ is positive then 
	\begin{equation*}
		\p_{J} \left( |K_{\max}(\Lambda_n)| \geq (\theta -\eps)|\Lambda_n| \right) \underset{n\to \infty}{\longrightarrow} 1 
	\end{equation*}
    for every $\eps>0$.
\end{lemma}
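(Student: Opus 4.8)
My plan is to reduce the statement, via the ergodic theorem, to a quantitative estimate on the connectivity of the infinite cluster inside blocks, and then to establish that estimate by a renormalization argument in the spirit of Section~\ref{sec:large a}, but with renormalization scales adapted to the given kernel $J$.

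\textbf{Reduction via the ergodic theorem.} The measure $\p_J$ is an isometry-invariant product measure on the edge set $E$, the translation group $G=\bigoplus_{i\ge 1}\Z/L\Z$ acts on $E$ with all orbits infinite, and $(\Lambda_n)_{n\ge 1}$ is a F{\o}lner sequence for $G$ (indeed $g\Lambda_n=\Lambda_n$ for all sufficiently large $n$, for each fixed $g$). Hence $\p_J$ is $G$-ergodic, and applying the pointwise ergodic theorem to $\mathbbm{1}(0\leftrightarrow\infty)$ gives $|K_\infty\cap\Lambda_n|=(\theta+o(1))|\Lambda_n|$ almost surely, where $K_\infty$ denotes the a.s.\ unique infinite cluster. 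Writing $m_n$ for the size of the largest $\Lambda_n$-restricted cluster contained in $K_\infty$ (so that $|K_{\max}(\Lambda_n)|\ge m_n$) and $D_n:=|K_\infty\cap\Lambda_n|-m_n$ for the ``defect'', it then suffices to prove $\E_J[D_n]=o(|\Lambda_n|)$: combined with Markov's inequality and the ergodic theorem this yields $\p_J(m_n\ge(\theta-\eps)|\Lambda_n|)\to 1$, which implies the lemma.

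\textbf{Renormalization.} Since every edge leaving $\Lambda_{n+1}$ has length $\ge L^{n+2}$, the $K_\infty$-restricted clusters of $\Lambda_{n+1}$ are obtained from those of its $L$ scale-$n$ sub-blocks by merging along open edges of length $L^{n+1}$; thus on the event $G_{n+1}$ that the $K_\infty$-restricted giants of all $L$ sub-blocks become joined to one another by such edges, $D_{n+1}\le\sum_{i=1}^{L}D_n^{(i)}$ with $D_n^{(i)}$ the defect of the $i$-th sub-block. The plan is to choose a sparse sequence of scales $n_0<n_1<\cdots$ and iterate a coarse-grained version of this bound between consecutive scales: declare a scale-$n_\ell$ sub-block ``good'' if it has a $K_\infty$-restricted cluster of density at least $\theta-\eps_\ell$, and run a renormalization analogous to Lemma~\ref{lem:renorm ineq} to show that the giants of the good sub-blocks inside $\Lambda_{n_{\ell+1}}$ merge, and absorb all but a small density of the remaining $K_\infty$-vertices, with probability tending to $1$. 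Taking expectations and using translation invariance ($\E_J[D_{n_\ell}^{(i)}]=\E_J[D_{n_\ell}]$) then gives $\E_J[D_{n_{\ell+1}}]/|\Lambda_{n_{\ell+1}}|\le \E_J[D_{n_\ell}]/|\Lambda_{n_\ell}|+\rho_\ell$ with $\sum_\ell\rho_\ell<\infty$, so that the ratio converges; one then needs to show the limit is $0$.

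\textbf{Main obstacle.} I expect the real work to lie in two places. First, one must show the merging events occur with overwhelming probability: the failure probability for joining two $K_\infty$-giants in different scale-$k$ sub-blocks by a single length-$L^k$ edge is at most $\exp(-\Omega(\theta^2 J(L^k)|\Lambda_{k-1}|^2))$, and summability of these errors requires $J(L^k)|\Lambda_{k-1}|^2\to\infty$ fast enough along the chosen scales. That this holds once $\theta>0$ is exactly the content (up to coarse-graining over intervals of scales, using the regularity of $J$) of the sharp non-triviality analysis behind Theorem~\ref{theo:existence}; this is the input that makes the argument run for a general regular kernel rather than only the kernels $|e|^{-d-\alpha}$ with $\alpha<d$ of Koval, Meester and Trapman, for which every scale is automatically ``strong''. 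Second, one needs a usable base case and a way to push the certified density up to $\theta-\eps$: being in the infinite cluster does not by itself force a vertex into a large restricted cluster, so I would first prove the weaker fact $\p_J(m_n\ge\gamma|\Lambda_n|)\to 1$ for some fixed $\gamma=\gamma(\theta)>0$ by a preliminary coarse-graining modeled on Theorem~\ref{theo:existence large a}, and then bootstrap the density towards $\theta$ via the defect recursion, using that in a good sub-block the restricted giant already contains all but a controlled fraction of the infinite-cluster vertices in that sub-block. Making this bootstrap and the choice of scales work cleanly is the technically delicate heart of the proof.
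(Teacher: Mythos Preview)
Your reduction via the ergodic theorem is fine and matches the paper's first step, but the renormalization you propose does not close. The recursion $\E[D_{n_{\ell+1}}]/|\Lambda_{n_{\ell+1}}|\le \E[D_{n_\ell}]/|\Lambda_{n_\ell}|+\rho_\ell$ has no contraction factor: on the good merging event you only get $D_{n+1}\le\sum_i D_n^{(i)}$, which preserves the defect density rather than shrinking it. So at best you show the ratio is bounded, not that it tends to~$0$. You flag this yourself (``one then needs to show the limit is $0$'') but the ``bootstrap'' you sketch to fix it is not an argument, and your appeal to the analysis behind Theorem~\ref{theo:existence} for summability is misplaced---that theorem concerns one specific kernel, whereas here $J$ is an arbitrary regular kernel with $\theta>0$, and for kernels near the threshold (e.g.\ $J(L^k)\sim aL\log(k)L^{-2k}$) the failure probabilities $\exp(-\Omega(\theta^2 J(L^k)L^{2k}))\asymp k^{-\Omega(\theta^2 a)}$ need not be summable once $\theta^2 a$ is close to~$1$.

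The paper avoids all of this with a single direct idea you are missing: the \emph{critical size} $f(n):=\bigl(\sum_{x\notin\Lambda_n}J(0,x)\bigr)^{-1}$. One shows $\p_J\bigl(|K_0(\Lambda_n)|\le rf(n),\,0\leftrightarrow\infty\bigr)\to 0$ for every $r>0$ by a short case split: if $|K_0(\Lambda_n)|\le\sqrt{f(n)}$ then the expected number of edges leaving it is $o(1)$, so it cannot reach $\Lambda_n^c$; if $\sqrt{f(n)}<|K_0(\Lambda_n)|\le rf(n)$ then with probability at least $e^{-r}$ no edge leaves it, so the event $\{0\leftrightarrow\infty\}$ costs at most a constant factor compared to $\{\sqrt{f(n)}<|K_0|\le rf(n)\}$, which tends to~$0$. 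Together with the ergodic theorem this gives that, with high probability, at least $(\theta-\eps_1)|\Lambda_n|$ vertices of $\Lambda_n$ lie in restricted clusters of size $\ge rf(n)$. Then a \emph{single} coarse-graining step suffices: chop these clusters into metavertices of size $\sim\sqrt{r}f(n)$, so that each sub-block of $\Lambda_{n+1}$ carries $N_i\le |\Lambda_n|/\lfloor\sqrt{r}f(n)\rfloor$ metavertices, and regularity of $J$ gives $f(n)^2J(L^{n+1})\ge cf(n)/|\Lambda_n|$, so that any two metavertices in distinct sub-blocks are joined with probability at least $1-\exp(-c'\sqrt{r}/N_i)$. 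For $r$ large this $L$-partite random graph has a giant containing a $(1-\eps_2)$-fraction of the metavertices, hence $|K_{\max}(\Lambda_{n+1})|\ge(\theta-\eps_1-2\eps_2)|\Lambda_{n+1}|$ with high probability. No multi-scale iteration, no bootstrap, and no summability issue.
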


(Note that this would be a trivial consequence of the ergodic theorem if we were allowing connections to leave the block.)

\begin{proof}[Proof of \Cref{lem:cluster size restricted}]
	We define the {\sl critical size at scale $n$}, which we denote by $f(n)$, via
    \[\frac{1}{f(n)} = \sum_{x \notin \Lambda_n} J(\{0,x\}),\] so that \[\E \left[\left|\left\{x \notin \Lambda_n : \{0,x\} \text{ open}\right\}\right|\right] = \sum_{x\notin \Lambda_n} 1-\exp(-J(\{0,x\})) \leq \sum_{x\notin \Lambda_n} J(\{0,x\}) \leq f(n)^{-1}\] and $f(n)$ diverges to infinity as $n\to \infty$ since $J$ is integrable. We first prove that
	\begin{multline}\label{eq:critical size}
		\lim_{n\to \infty} \frac{1}{|\Lambda_n|}\E_J \left[\left|\left\{x \in \Lambda_n : |K_x(\Lambda_n)| \leq r f(n) ,\, x \leftrightarrow \infty \right\} \right| \right] \\= \lim_{n\to \infty} \p_J(|K_0(\Lambda_n)|\leq r f(n), 0 \leftrightarrow \infty)= 0
	\end{multline}
    for every $r>0$.
	Indeed, we have
	\begin{align}\label{eq:sum of two}
		&\notag \p_J \left( |K_0(\Lambda_n)| \leq r f(n),\, 0 \leftrightarrow \infty \right) \\
		& =
		\p_J \left( \sqrt{f(n)} < |K_0(\Lambda_n)| \leq r f(n),\, 0 \leftrightarrow \infty \right) + \p_J \left( |K_0(\Lambda_n)| \leq \sqrt{f(n)} ,\, 0 \leftrightarrow \infty \right).
	\end{align}
	To bound the second term,
    we note that if $0$ is connected to $\infty$ then there must be an open edge with one endpoint in $K_0(\Lambda_n)$ and the other in $\Lambda_n^c$, so that
	\begin{align}\label{eq:summand2}
		\notag \p_J \left( |K_0(\Lambda_n)| \leq \sqrt{f(n)} ,\, 0 \leftrightarrow \infty \right) &\leq \p_J \left( 0 \leftrightarrow \infty \;\Big|\; |K_0(\Lambda_n)| \leq \sqrt{f(n)}  \right) \\ &\leq \sqrt{f(n)} \sum_{x\notin \Lambda_n} \p_{J} \left(0\sim x\right) \leq \frac{\sqrt{f(n)} }{f(n) } = o(1)
	\end{align}
    as required, where we write $\{0\sim x\}$ for the event that the edge $\{0,x\}$ is open.
	To bound the first term in \eqref{eq:sum of two}, we note that for any set $A\subset \Lambda_n$ with $|A|\leq r f(n)$, there is a uniformly positive (in $n$) probability that there is no open edge from $A$ to $\Lambda_n^c$. Indeed, we have explicitly for every set $A\subset \Lambda_n$ with $|A|\leq r f(n)$ that
    \begin{equation*}
        \p_J \left( A \nsim \Lambda_n^c  \right) \geq \exp \left( - |A| \sum_{x\notin \Lambda_n} J(\{0,x\}) \right) \geq e^{-r},
    \end{equation*}
    where $\{A\nsim \Lambda_n^c\}$ denotes the event that there are no open edges between $A$ and $\Lambda_n^c$.
    As the edges inside $\Lambda_n$ are independent of the edges between $\Lambda_n$ and $\Lambda_n^c$, we also have the conditional bound
    \begin{align*}
        \p_J \left( A \sim \Lambda_n^c \;|\; K_0(\Lambda_n) = A \right) \leq 1 \leq e^{r} \p_J \left( A \nsim \Lambda_n^c \;|\; K_0(\Lambda_n) = A \right),
    \end{align*}
    which holds for all sets $A \subset \Lambda_n$ with $|A|\leq rf(n)$.
    Writing $\cA$ for all subsets $A \subset \Lambda_n$ with $\sqrt{f(n)} < |A| \leq r f(n)$, we see that
	\begin{align} \label{eq:summand1}
		& \notag \p_J \left( \sqrt{f(n)} < |K_0(\Lambda_n)| \leq r f(n),\, 0 \leftrightarrow \infty \right) \\
        &\hspace{5cm} \notag = \sum_{A \in \cA} \p_J \left( 0 \leftrightarrow \infty \;|\; K_0(\Lambda_n) = A \right) \p_J \left( K_0(\Lambda_n) = A \right)
        \\
        &\hspace{5cm} \notag
        \leq \sum_{A \in \cA} \p_J \left( A \sim \Lambda_n^c \;|\; K_0(\Lambda_n) = A \right) \p_J \left( K_0(\Lambda_n) = A \right)
        \\
        &\hspace{5cm} \notag
        \leq e^r \sum_{A \in \cA} \p_J \left( A \nsim \Lambda_n^c \;|\; K_0(\Lambda_n) = A \right) \p_J \left( K_0(\Lambda_n) = A \right)
        \\
        &\hspace{5cm} \notag
        = e^r \p_J \left( \sqrt{f(n)} < |K_0(\Lambda_n)| \leq r f(n) ,\, K_0(\Lambda_n) \nsim \Lambda_n^c \right)
        \\
        &\hspace{5cm} 
        \leq e^r \p_J \left( \sqrt{f(n)} < |K_0| \leq r f(n)  \right) = o(1),
	\end{align}
    where the fact that the final expression tends to zero as $n\to\infty$ follows from the fact that $f(n)$ diverges and thus $\p_J \left( \sqrt{f(n)} < |K_0| \leq r f(n)  \right)$ tends to zero regardless of the law of $|K_0|$.
	Combining the two inequalities \eqref{eq:summand2} and \eqref{eq:summand1} and inserting them into equation \eqref{eq:sum of two} we get that $\p_J \left( |K_0(\Lambda_n)| \leq r f(n), 0 \leftrightarrow \infty \right)=o(1)$ as required.
    
    We now apply \eqref{eq:critical size} to prove the lemma. It follows from the ergodic theorem (which applies to percolation on any amenable group \cite{lindenstrauss1999pointwise}) that 
		$|\{x \in \Lambda_n : x \leftrightarrow \infty \}|/|\Lambda_n| \to \theta$
	almost surely and hence in probability. Since 
$|\{x \in \Lambda_n : |K_x(\Lambda_n)| \leq r f(n) , x \leftrightarrow \infty \}| / |\Lambda_n| \to 0$
    in expectation and thus also in probability by \eqref{eq:critical size}, it follows that
	\begin{align*}
		\p_J \left( \left|\bigl\{x \in \Lambda_n : |K_x(\Lambda_n)| \geq rf(n)\bigr\}\right| \geq (\theta-\eps_1) |\Lambda_n| \right) \to 1
	\end{align*}
    for every $\eps_1>0$.
	Assuming this lower bound on the cluster size in $\Lambda_n$, we now construct a \emph{linear-size} cluster in $\Lambda_{n+1}$. Fix $\eps_1 \in (0,\theta/2)$. $\Lambda_{n+1}$ consists of $L$ independent copies of $\Lambda_n$; call these copies $\Lambda_n^1, \ldots, \Lambda_n^{L}$ and let $\cF$ be the sigma-algebra generated by the restriction of the percolation configuration to each of these copies.
    Let $D_i = |\{x \in \Lambda_n^i : |K_x(\Lambda_n^i)| \geq rf(n)\}| $ and consider the event that $D_i \geq (\theta-\eps_1) |\Lambda_n|$ for every $i\in \{1,\ldots,L\}$. 
    On this event, since $D_1 \geq \theta/2 |\Lambda_n|$, 
    we can split each cluster of size at least $r f(n)$ in $\Lambda^1_n$ into (not necessarily connected) sets of size between $\lfloor \sqrt{r} f(n) \rfloor$ and $2\lfloor \sqrt{r} f(n) \rfloor$ to obtain that 
    there exists 
    \begin{equation}\label{eq:N1_bound}\frac{|\Lambda_n|}{\lfloor\sqrt{r}f(n) \rfloor} \geq N_1 \geq \frac{\theta |\Lambda_n|}{4 \lfloor \sqrt{r} f(n) \rfloor} \end{equation}
    and a collection of disjoint (not necessarily connected\footnote{In fact, it is possible to take these sets to be connected after changing the relevant constants by a bounded factor using \cite[Lemma 2.4]{hutchcroft2020power}, but we will not need this.}) sets
     $K_{1,1},\ldots, K_{1,N_1} \subset \Lambda^1_n$ such that $|K_{1,j}| \in \left\{ \lfloor \sqrt{r} f(n) \rfloor, \ldots, 2  \lfloor \sqrt{r} f(n) \rfloor\right\}$  for every $1\leq j \leq N_1$
     and such that each $K_{1,j}$ is a subset of a single cluster in $\Lambda^1_n$; after conditioning on $\cF$ we fix one such choice of $K_{1,1},\ldots,K_{1,N_1}$ and refer to these distinguished sets as \textbf{metavertices}.
Repeating the same procedure for each of the copies
$\Lambda_n^i$, we obtain for each $1\leq i \leq L$ a collection of metavertices $\{K_{i,j}:1\leq j \leq N_i\}$,  chosen in a $\cF$-measurable manner, with each $N_i$ satisfying the same bounds \eqref{eq:N1_bound} as $N_1$.
     
    We now connect the metavertices. Considering the edges that get drawn between different metavertices when we introduce the edges of the percolation configuration between the different blocks $\Lambda_n^i$, we have a random $L$-partite graph in which the $i$-th $L$-partite component consists of $N_i$ metavertices, and two metavertices from different $L$-partite components are connected with probability at least
	\begin{equation}\label{eq:metaconnection}
		1-\exp \left( \lfloor\sqrt{r}f(n) \rfloor^2 J\left(L^{n+1}\right)  \right) 
		\geq 1-\exp \left( \frac{r f(n)^2 J\left(L^{n+1}\right)}{2}  \right),
	\end{equation} 
	where the last inequality holds for $r,n$ large enough.
	From the definition of $f(n)$ and the regularity of the kernel $J$, we know that there exists a constant $c>0$ such that for all large enough $n$
    \begin{equation*}
        f(n) J\left(L^{n+1}\right) = \frac{f(n)}{|\Lambda_{n+1}\setminus \Lambda_{n}|} \sum_{x \in \Lambda_{n+1}\setminus \Lambda_{n}} J\left(\{0,x\}\right)  \geq \frac{c f(n)}{|\Lambda_{n}|} \sum_{x \notin \Lambda_{n}} J\left(\{0,x\}\right) = \frac{c}{|\Lambda_n|}
    \end{equation*}
    and thus we get
	\begin{align*}
		\frac{r f(n)^2 J\left(L^{n+1}\right)}{2} \geq \frac{r f(n) c}{2|\Lambda_n|}\geq c \sqrt{r} \frac{\lfloor \sqrt{r} f(n) \rfloor}{2|\Lambda_n|}
		\geq    \sqrt{r} \frac{c\theta}{8 N_i},
	\end{align*}
    where we used \eqref{eq:N1_bound} in the final inequality.
	Inserting this into \eqref{eq:metaconnection} shows that two meta-vertices are connected with probability at least
	\begin{align*}
		1-\exp \left( \frac{r f(n)^2 J\left(L^{n+1}\right)}{2}  \right) \geq
		1-\exp \left( \sqrt{r} \frac{c\theta}{8 N_i}  \right).
	\end{align*}
	For an $L$-partite graph with this connection probability, one deduces that there exists a positive constant $r_0$ (determined by $L$ and the constant $c\theta/8$) such that if $r \geq r_0$  then there exists a linear size component with high probability, and the density of this linear size component tends to $1$ as $r$ diverges; see e.g.\ \cite{bollobas2007phase}. As such, for each $\eps_2>0$ there exists $r_1$ such that if $r\geq r_1$ then a proportion $1-\eps_2$ of the metavertices merge with high probability, as $n \to \infty$. If $D_i = \left| \left\{ x \in \Lambda_n^i : |K_x(\Lambda_n^i)| \geq r f(n) \right\} \right| \geq (\theta - \eps_1) |\Lambda_n|$ for all $i\in \{1,\ldots,L\}$, then a proportion $1-\eps_2$ of the metavertices needs to contain at least 
    \begin{align*}
        &(\theta-\eps_1) L |\Lambda_n| - \eps_2 \left( \sum_{i=1}^L N_i \right) 2 \lfloor \sqrt{r} f(n) \rfloor
        \\
        &
        \hspace{3cm} \overset{\eqref{eq:N1_bound}}{\geq} (\theta-\eps_1) L |\Lambda_n| - \eps_2 L \frac{|\Lambda_n|}{\lfloor \sqrt{r} f(n) \rfloor}  2 \lfloor \sqrt{r} f(n) \rfloor
        = (\theta - \eps_1 - 2 \eps_2 ) |\Lambda_{n+1}|
    \end{align*}
    many vertices in $\H_L$.
    As the metavertices are themselves path-connected within $\Lambda_n^i$, this shows that there exists a cluster of density at least $(\theta-\eps_1-2\eps_2)$ in $\Lambda_{n+1}$ with high probability. This finishes the proof, since $\eps_1, \eps_2 > 0$ were arbitrary.
\end{proof}

\begin{proof}[Proof of \Cref{theo:(dis)continuity}, part 1]
	Assume that $\theta = \p_{\lambda_c}(|K_0|=\infty) > 0$. Take $\eps>0$ small enough and $M$ large enough so that for the kernel $J$ defined by
	\begin{align*}
		J(e) = M \frac{\log\log(|e|)}{|e|^2} \vee 0
	\end{align*}
	one has $\p_{1-\eps, J} \left(0 \leftrightarrow \infty\right) > 0$.
	By Lemma \ref{lem:cluster size restricted} one has for $n$ large enough that 
    \[\p_{\lambda_c} \left(|K_{\max}(\Lambda_n)| \geq \frac{\theta}{2}|\Lambda_n|\right) \geq 1-\frac{\eps}{2}.\]
    As such, by continuity and the assumption $\cJ \gg \frac{\log\log n}{n^2}$, we may find $n$ and $\lambda< \lambda_c$ such that
	\begin{equation*}
		\p_{\lambda} \left(|K_{\max}(\Lambda_n)| \geq \frac{\theta}{4}|\Lambda_n|\right) \geq 1-\eps
	\end{equation*}
	and 
	\begin{equation*}
	1-\exp \left(- \frac{\theta^2}{16}|\Lambda_n|^2 \mathcal{J}\left(\lambda, L^N\right) \right) \geq 1- \exp \left(-\left( M \frac{\log\log\left( L^{N-n} \right)}{L^{2(N-n)} } \vee 0 \right)\right)
	\end{equation*}
    for all $N\geq n$.
We now consider a site-bond percolation model in which we contract each $n$-block $\Lambda_n(u)$ into a single vertex as follows. We say that an $n$-block $\Lambda_n(u)$ is \textbf{good} if $|K_{\max}(\Lambda_n(u))| \geq \frac{\theta}{4}|\Lambda_n|$, so that each $n$-block is good with probability at least $1-\eps$ under the measure $\p_\lambda$. For each good $n$-block, pick one of the largest clusters inside the block (in a manner depending only on the configuration inside the block). We say that two $n$-blocks are connected if there exists an edge between the largest clusters of these blocks. Thus, two good $n$-blocks $\Lambda_n(u)$ and $\Lambda_n(v)$ with $\|u-v\|=L^N$ for some $N>n$ are connected with probability at least
	\begin{equation*}
	1-\exp \left(- \frac{\theta^2}{16}|\Lambda_n|^2 \mathcal{J}\left(\lambda, L^N\right) \right) \geq 1- \exp \left(-\left( M \frac{\log\log\left( L^{N-n} \right)}{L^{2(N-n)} } \vee 0 \right)\right).
	\end{equation*}
	It follows that the random graph whose vertices are the good $n$-blocks and where two good $n$-blocks are connected by an edge if their largest clusters are, stochastically dominates percolation with the measure $\p_{1-\eps, J}$ on $\H_L$ and therefore that there exists an infinite open cluster under the measure $\p_\lambda$. This contradicts the assumption that $\lambda<\lambda_c$.
\end{proof}

\section{The discontinuous regime}\label{sec:discontin}

In this section, we use \Cref{theo:existence large a} to prove parts 2 and 3 of \Cref{theo:(dis)continuity}.
Our main tool in showing this is Proposition \ref{propo:discont}, which states that for all kernels $J$ on $\H_{L}^{d}$ one has \[\left(\limsup_{|e|\to \infty} J(e) \frac{L^{-d}|e|^2}{\log\log(|e|)} \right) \p_J \left( |K_0| = \infty \right)^2 \in \{0\} \cup \left[1,\infty\right].\] This result is the hierarchical version of a similar result for one-dimensional $|x-y|^{-2}$ long-range percolation by Aizenman and Newman \cite[Proposition 4.1]{MR868738}. This also implies that for all kernels $J$ with $\limsup_{|e|\to \infty} J(e) \frac{L^{-d}|e|^2}{\log\log(|e|)} \leq 1$ one has $\p_J \left( |K_0| = \infty \right)=0$, yielding the second claim of \Cref{theo:existence}. As usual, it suffices to prove the claim in the case $d=1$.

\begin{proposition}\label{propo:discont}
    Consider percolation on $\H_L$ with respect to some isometry-invariant kernel $J$. If we define $\theta=\p_J ( |K_0| = \infty )$ and 
    	\begin{align}\label{eq:definition of a2}
		\beta \coloneqq \limsup_{|e|\to \infty} J(e) \frac{L^{-1}|e|^{2}}{\log\log(|e|)} ,
	\end{align}
    then one has either $\theta=0$ or $\beta \theta^2 \geq 1$.
\end{proposition}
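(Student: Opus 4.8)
The plan is to argue by contradiction, in the spirit of Aizenman and Newman's original argument but adapted to the hierarchical structure, using \Cref{lem:cluster size restricted} as the main input. Suppose $\theta>0$ but $\beta\theta^2<1$. Then we may fix $\eps>0$ small and $\beta_\star\in(\beta,\infty)$ with $(\theta+\eps)^2\beta_\star<1$, and since $\beta$ is a $\limsup$ we have $J(L^n)\le \beta_\star L\,(\log\log L^n)\,L^{-2n}$ for all $n\ge n_0$. By \Cref{lem:cluster size restricted}, the a.s.\ uniqueness of the infinite cluster (valid since $(\H_L,J)$ is amenable and transitive), and the ergodic theorem (which also gives the complementary bound $|K_\infty\cap\Lambda_m|\le(\theta+\eps)|\Lambda_m|$), after enlarging $n_0$ we may assume that for every $m\ge n_0$ and every $m$-block $B$ the event $\mathrm{Reg}(B)$ that ``$B$ is regular'' — meaning that $K_{\max}(B)$ coincides with $K_\infty\cap B$ and has size in $[(\theta-\eps)L^m,(\theta+\eps)L^m]$ — has probability at least $1-\eps$.

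Now fix a large $N$ and work inside $\Lambda_N$. For $n_0\le n<N$, call an $n$-block $B\subseteq\Lambda_N$ \emph{stranded} if $\mathrm{Reg}(B)$ holds but $K_{\max}(B)$ is not connected to $K_{\max}(\Lambda_N)$ within $\Lambda_N$, and \emph{fresh} if in addition no strict ancestor of $B$ inside $\Lambda_N$ is stranded. The key deterministic observation is that if $\mathrm{Reg}(B)$ holds and $K_{\max}(B)$ emits no open edge into $\Lambda_N\setminus B$ at all, then $B$ is automatically fresh and stranded: any regular ancestor $A\supseteq B$ would have $K_{\max}(B)=K_\infty\cap B$ as a connected component of the induced graph on $A$ that lies inside $B$, forcing $K_{\max}(A)=K_\infty\cap A=K_{\max}(B)$ and contradicting the lower bound on $|K_{\max}(A)|$ (up to the harmless event that $K_{\max}(\Lambda_N)$ is itself small, which \Cref{lem:cluster size restricted} excludes). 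Fresh stranded blocks at different scales are non-nested, hence their largest clusters are disjoint subsets of $\Lambda_N$, all disjoint from $K_{\max}(\Lambda_N)$; summing and taking expectations,
\[
\E_J\bigl|K_{\max}(\Lambda_N)\bigr|\;\le\;L^N-\sum_{n=n_0}^{N-1} L^{N-n}\,\E_J\!\left[\,|K_{\max}(B)|\,;\,B\text{ fresh stranded}\,\right],
\]
where $B$ is any fixed $n$-block in the inner sum (all $L^{N-n}$ of them contribute equally).

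It remains to lower bound $\p_J(B\text{ fresh stranded})$ for an $n$-block $B$. On $\mathrm{Reg}(B)$ with $|K_{\max}(B)|=s\le(\theta+\eps)L^n$, the conditional probability that $K_{\max}(B)$ emits no edge into $\Lambda_N\setminus B$ is exactly $\exp\!\bigl(-s\sum_{k=1}^{N-n}(L-1)L^{n+k-1}J(L^{n+k})\bigr)$; using $(L-1)L^{n+k-1}J(L^{n+k})\le(L-1)\beta_\star(\log\log L^{n+k})L^{-(n+k)}$ together with $\sum_{k\ge1}L^{-k}=1/(L-1)$ and the slow variation of $\log\log$, this exponent is at most $(1+o_n(1))(\theta+\eps)\beta_\star\log\log L^n$, so the probability is at least $n^{-(1+o_n(1))(\theta+\eps)\beta_\star}$. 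To obtain the \emph{sharp} exponent $(\theta+\eps)^2\beta_\star$ one must be more careful: rather than forbidding all edges out of $B$, one forbids only edges from $K_{\max}(B)$ to the \emph{trace of $K_\infty$} in $\Lambda_N\setminus B$, which in each shell $\Lambda_{n+k}(B)\setminus\Lambda_{n+k-1}(B)$ occupies only a $\approx\theta$-fraction of the $(L-1)L^{n+k-1}$ sites — at the cost of ruling out that $K_{\max}(B)$ is reattached to $K_{\max}(\Lambda_N)$ \emph{indirectly} through a chain of finite clusters, and of conditioning scale by scale on the regularity of the $O(\log n)$ ancestor shells that contribute non-negligibly (which costs only a further $n^{-o(1)}$ factor). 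Either way $\p_J(B\text{ fresh stranded})\ge n^{-(1+o_n(1))(\theta+\eps)^2\beta_\star}$, and since $|K_{\max}(B)|\ge(\theta-\eps)L^n$ on $\mathrm{Reg}(B)$ the displayed inequality yields
\[
\E_J\bigl|K_{\max}(\Lambda_N)\bigr|\;\le\;L^N\Bigl(1-(\theta-\eps)(1-o_N(1))\sum_{n=n_0}^{N-1}n^{-(1+o_n(1))(\theta+\eps)^2\beta_\star}\Bigr).
\]
Since $(\theta+\eps)^2\beta_\star<1$, the sum on the right diverges as $N\to\infty$, so the right-hand side is eventually negative — an absurdity. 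Hence $\beta\theta^2\ge1$ whenever $\theta>0$.

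The step I expect to be the real obstacle is extracting this sharp constant: showing that a block whose infinite-cluster trace fails to connect \emph{directly} to $K_\infty$ in the surrounding shells is, with probability $n^{-(1+o(1))\theta^2\beta}$, genuinely cut off from $K_{\max}(\Lambda_N)$ — i.e.\ that reattachment through small clusters is only a lower-order correction — while simultaneously keeping control of the $\Theta(\log n)$ relevant ancestor shells. The cancellation of the shell volumes $(L-1)L^{n+k-1}$ against the geometric series $\sum_k L^{-k}=1/(L-1)$ is precisely what makes the constant come out to exactly $\theta^2\beta$ rather than an $L$-dependent multiple, and getting this accounting right is the technical heart of the argument.
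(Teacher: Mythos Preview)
There are two genuine gaps. The first is that your freshness argument and your probability computation rest on incompatible readings of $\mathrm{Reg}(B)$. The deterministic observation (that on $\mathrm{Reg}(B)\cap\{K_{\max}(B)\nsim\Lambda_N\setminus B\}$ no regular ancestor can be stranded) genuinely needs $K_{\max}(B)=K_\infty\cap B$, since that identification is what forces $K_{\max}(B)\subset K_{\max}(A)$ for a regular ancestor $A$ and hence the size contradiction. But then $\mathrm{Reg}(B)$ depends on edges outside $B$, so the product formula you write for the conditional probability of $\{K_{\max}(B)\nsim\Lambda_N\setminus B\}$ is no longer valid; indeed on $\mathrm{Reg}(B)$ the set $K_{\max}(B)\subset K_\infty$ must emit \emph{some} edge out of $B$, and forcing all such edges to land in $\Lambda_N^c$ has probability $o(1)$ as $N\to\infty$. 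With the purely local definition $\mathrm{Reg}(B)=\{|K_{\max}(B)|\in[(\theta-\eps)L^n,(\theta+\eps)L^n]\}$ the conditioning factors, but then a regular ancestor $A$ can have $K_{\max}(A)$ disjoint from $K_{\max}(B)$ and yet itself be stranded, so $B$ need not be fresh --- and without freshness the volumetric sum overcounts by a factor of order $N$, which swamps the $\sum n^{-\gamma}$ divergence.

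The second gap is the one you flag: the sharp exponent $(\theta+\eps)^2\beta_\star$ is asserted but not proved, and the naive exponent $(\theta+\eps)\beta_\star$ is useless once $\theta$ is small. The paper solves both problems at once by abandoning the stranded-block count in favour of a direct annular cut. At each of $\sim N$ well-separated scales $N_i=N+i\lfloor\log N\rfloor^2$ it builds a \emph{local} event $\mathbb{B}_N^i\cap\mathbb{C}_N^i$ of probability $\ge\tfrac12 N^{-\gamma}$ with $\gamma<1$, whose occurrence deterministically disconnects $\Lambda_{N_i-M}$ from $\Lambda_{N_i+M}^c$. The sharp constant comes from a fixed finite size threshold $M$ rather than from $K_\infty$: in each thin annulus the clusters of size $>M$ have density at most $\theta^\star+\eps$ with $\theta^\star=\p_J(|K_0|>M)\downarrow\theta$, and $\mathbb{B}_N^i$ closes only the big-to-big edges across the annular band (plus the few edges jumping over it entirely), at cost $N^{-\gamma}$ with $\gamma$ arbitrarily close to $(\theta^\star)^2\beta^\star$. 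The event $\mathbb{C}_N^i$ rules out small-cluster bridging by requiring that every cluster of size $\le M$ in the relevant annuli emits at most one outgoing edge; this has probability $1-o(1)$ via BK, since a set of $M$ vertices has only $O(M\log N/L^{N_i})$ expected outgoing edges, and a single outgoing edge cannot bridge two sides. Conditionally on the in-annulus cluster structure both $\mathbb{B}_N^i$ and $\mathbb{C}_N^i$ are decreasing, so Harris--FKG combines them. Finally these events live in disjoint edge-windows of width $O((\log N)^2)$, so after conditioning on the high-probability event $A_N$ that no edge skips such a window they are independent across $i\in4\N$, and $N\cdot N^{-\gamma}\to\infty$ forces some cut to occur, giving $\theta=0$. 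Note also that this argument uses only the ergodic theorem for the density of big clusters, not \Cref{lem:cluster size restricted}, so it does not import the regularity hypothesis on $J$ that your approach tacitly requires.
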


\begin{proof}[Proof of \Cref{propo:discont}]
	We will assume that $\beta < \infty$ and $\beta \theta^2 < 1$ and deduce that $\theta=0$. Defining $\theta^\star = \theta^\star(M) = \p_J\left(|K_0|>M\right)$ for a large constant $M$, we choose $\beta^\star > \beta$, $\eps > 0$, $\gamma \in (0,1)$, and $M\in \N$ such that 
	\begin{align}
		 \label{eq:M_beta*_assumptions} &\beta^\star \left(\theta^\star + \eps \right)^2 < 1,\qquad   L^{-M} < \frac{\eps}{2},\qquad \text{ and } \qquad J(L^n) \leq \frac{\beta^\star L \log(n)}{L^{2n}}
         \end{align} for all sufficiently large $n$, and 
         \begin{align}
		\label{eq:gamma assumption} & 	L^{-M}\sum_{j=0}^\infty \frac{(L-1)\beta^\star }{L^{1+j}} 
		 +
		 \sum_{j=M}^\infty  \frac{(L-1)\beta^\star }{L^{1+j}}  + (\theta^\star + \eps)^2  \sum_{j=0}^{M-1}  \frac{(L-1)\beta^\star}{L^{1+j} } 
		 < \gamma.
	\end{align}
This is possible since, as $M\to \infty$, $\theta^\star=\theta^\star(M)$ converges to $\theta$ and the sum in \eqref{eq:gamma assumption} converges to 
    \begin{equation*}
        (\theta + \eps)^2 \beta^\star  \sum_{j=0}^{\infty}  \frac{(L-1)}{L^{1+j} } = (\theta + \eps)^2 \beta^\star,
    \end{equation*}
 which is strictly less than $1$ for suitable choices of $\beta^\star > \beta$ and $\eps > 0$ since $\beta \theta^2 < 1$.

    Our first goal is to show that the event $A_N$, defined by
	\begin{equation*}
        A_N = \bigcap_{i=0}^{N} \left\{ 
		\Lambda_{N + i \lfloor \log N \rfloor^2} \nsim \Lambda_{N + (i+1) \lfloor \log N \rfloor^2-1}^c \right\},
	\end{equation*}
    holds with high probability when $N$ is large.
    For $i \in \{0,\ldots,N\}$ and for $N$ large enough, we can take a union bound over the annuli $\Lambda_j \setminus \Lambda_{j-1}$ with $j\geq N+(i+1)\lfloor \log N\rfloor^2$ to get that
	\begin{align*}
		& \p_J\left(\Lambda_{N + i \lfloor \log N \rfloor^2} \sim \Lambda_{N + (i+1) \lfloor \log N \rfloor^2-1}^c  \right) \\
        &\hspace{4cm}\leq  \sum_{x\in \Lambda_{N + i \lfloor \log N \rfloor^2}} \ 
        \sum_{j=N + (i+1) \lfloor \log N \rfloor^2}^\infty \ 
        \sum_{y \in \Lambda_j \setminus \Lambda_{j-1}}
        \p_J\left( x \sim  y \right) \\
        &\hspace{4cm}
		\leq \left|\Lambda_{N + i \lfloor \log N \rfloor^2}\right| 
		\sum_{j=N + (i+1) \lfloor \log N \rfloor^2}^{\infty} |\Lambda_j| \frac{\beta^\star L \log\log\left(L^j\right)}{L^{2j}}\\
		&\hspace{4cm}
		= \beta^\star L \left|\Lambda_{N + i \lfloor \log N \rfloor^2}\right| 
		\sum_{j=N + (i+1) \lfloor \log N \rfloor^2}^{\infty}  \frac{\log\log\left(L^j\right)}{L^{j}}
		\\
		 &\hspace{4cm}\leq C \left|\Lambda_{N + i \lfloor \log N \rfloor^2}\right| 
		 \frac{\log\log\left(L^{N + (i+1) \lfloor \log N \rfloor^2}\right)}{L^{\left(N + (i+1) \lfloor \log N \rfloor^2\right)}}
		 \leq C^\prime \frac{\log (N)}{L^{ \lfloor \log N \rfloor^2 }}
	\end{align*}
	for some constants $C,C^\prime < \infty$. Taking a further union bound over  $i\in \{0,\ldots, N \}$, we obtain that the crude bound
	\begin{align}\label{eq:A_N probab bound}
		\p_{J}(A_N) \geq 1- (N+1) C^\prime \frac{\log (N)}{L^{\lfloor \log N \rfloor^2 }} \geq 1 - \frac{1}{N^2}
	\end{align}
	holds for all sufficiently large $N$.

    Now, for each $N,i \in \N$ and $j \in \N\cup \{+\infty\}$, define 
	\begin{equation*}
        N_{i}= N+i \lfloor \log(N) \rfloor^2 \qquad \text{ and } \qquad
		\Delta_{i,j} = \Lambda_j \setminus \Lambda_i ,
	\end{equation*}
    where we define $\Lambda_{\infty}=V$.
	Let $\mathcal{C}_{i,j}$ be the collection of open clusters inside $\Delta_{i,j}$ (which can only use edges with both endpoints inside this set). Given disjoint sets of vertices $A$ and $B$, we write $\{A\Leftrightarrow B\}$ for the event that there exist at least two distinct open edges connecting $A$ to $B$.  
	Next, for $\lfloor \log(N) \rfloor^2>M$, we define the two events $ \mathbb{B}_N^i, \mathbb{C}_N^i$ by
	\begin{align*}
		\mathbb{B}_N^i & = \left\{ \Lambda_{N_{i} - M} \nsim \Delta_{N_{i},\infty} \right\} \cap \left\{ \Lambda_{N_{i}} \nsim \Delta_{N_{i}+M,\infty} \right\}\\
		&\hspace{8mm}
		 \cap \bigcap_{j=0}^{M-1} \left\{ A \nsim B \text{ for all } A \in \mathcal{C}_{N_{i} - M,N_{i}}, B \in \mathcal{C}_{N_{i}+j,N_{i}+j+1} \text{ with } 
		\left|A\right| > M, \left|B\right| > M
		 \right\},
		 \\
		 \mathbb{C}_N^i & = 
		 \left\{ A \nLeftrightarrow \Delta_{N_{i}-M,N_{i}}^c  \text{ for all } A \in \mathcal{C}_{N_{i} - M,N_{i}} \text{ with } 
		 \left|A\right| \leq M
		 \right\}\\
		 &\hspace{8mm}
		 \cap \bigcap_{j=0}^{M-1} \left\{ A \nLeftrightarrow \Delta_{N_{i}+j,N_{i}+j+1}^c \text{ for all } A \in \mathcal{C}_{N_{i} +j,N_{i} +j+1} \text{ with } 
		 \left|A\right| \leq M
		 \right\}.
	\end{align*}
    We will proceed with our analysis using two claims about these events whose proofs are deferred to later in this section:
    \begin{claim}\label{claim:intersec}
	If $\mathbb{B}_N^i \cap \mathbb{C}_N^i$ holds for some $N$ with $\lfloor \log(N)\rfloor^2 >M$ then there is no open path from $\Lambda_{N_{i}-M}$ to $\Lambda_{N_{i}+M}^c$.
\end{claim}
\begin{claim}\label{claim:probab}
	The event $\mathbb{B}_N^i \cap \mathbb{C}_N^i$ satisfies $\p_J \left( \mathbb{B}_N^i \cap \mathbb{C}_N^i \right) \geq 0.5 N^{-\gamma}$ for all sufficiently large $N$.
\end{claim}
(Recall that the constant $\gamma \in (0,1)$ was chosen at the beginning of the proof.) \Cref{claim:intersec} implies in particular that 
    \begin{multline*}
        \p_J \left( 0 \nleftrightarrow \infty \right) \geq \p \left( \exists j \in \{1,\ldots,N\} : \Lambda_{N_j-M} \nleftrightarrow \Lambda_{N_j+M}^c \right) \\ \geq \p \left( \exists j \in \{1,\ldots,N\} : \mathbb{B}_N^{j} \cap \mathbb{C}_N^{j} \text{ occurs} \right) .
    \end{multline*}
Meanwhile, \Cref{claim:probab} and the estimate \eqref{eq:A_N probab bound} imply that
    \begin{equation*}
        \p_J \left( \mathbb{B}_N^i \cap \mathbb{C}_N^i \mid A_N \right) = \frac{\p_J \left( \mathbb{B}_N^i \cap \mathbb{C}_N^i \right) - \p_J \left( \mathbb{B}_N^i \cap \mathbb{C}_N^i \cap A_N^c \right)}{\p_J \left( A_N \right)} \geq
        \frac{0.5 N^{-\gamma}-N^{-2}}{1}
        \geq
        \frac13 N^{-\gamma}
    \end{equation*}
    for sufficiently large $N$. Observe that, conditioned on $A_N$, the event $\{\mathbb{B}_N^i \cap \mathbb{C}_N^i\}$ depends only on the edges
    \begin{equation*}
        \left\{ e=\{x,y\} : x \in \Delta_{N_{i-1},N_{i+1}} , y \in \Delta_{N_{i-2},N_{i+2}} \right\} 
    \end{equation*}
    and that for different values of $i \in 4\N$, these sets of edges are disjoint.
    It follows that  the events $(\mathbb{B}_N^{4j} \cap \mathbb{C}_N^{4j})_{j\in \{1,\ldots,\lfloor N/4\rfloor \}}$ are independent conditional on $A_N$ and hence that
    \begin{multline*}
        \p_J \Biggl(\, \bigcap_{j=1}^{\lfloor N/4 \rfloor} \bigl( \mathbb{B}_N^{4j} \cap \mathbb{C}_N^{4j} \bigr)^c  \Biggr)
        \leq
        \p_J \Biggl(\, \bigcap_{j=1}^{\lfloor N/4 \rfloor} \bigl( \mathbb{B}_N^{4j} \cap \mathbb{C}_N^{4j} \bigr)^c \;\Bigg|\; A_N  \Biggr)
        +
        \p_J(A_N^c)\\
        =
        \prod_{j=1}^{\lfloor N/4 \rfloor} \p_J \left(  \left( \mathbb{B}_N^{4j} \cap \mathbb{C}_N^{4j} \right)^c \,\Big|\, A_N  \right)
        +
        \p_J(A_N^c)
        \leq
        \prod_{j=1}^{\lfloor N/4 \rfloor} \left( 1 - \frac{1}{3} N^{-\gamma} \right)
        +
        \p_J(A_N^c)
        =
        o(1),
    \end{multline*}
    where we used that $\gamma <1$ when concluding that the product in the final expression is $o(1)$. Thus, with high probability (in $N$), there exists a $j \in \{1,\ldots, N\}$ such that the event $\mathbb{B}_N^j \cap \mathbb{C}_N^j$ holds, meaning that
    \begin{align*}
        \p_J \left( 0 \nleftrightarrow \infty \right) \geq \lim_{N \to \infty} \p \left( \exists j \in \{1,\ldots,N\} : \mathbb{B}_N^{j} \cap \mathbb{C}_N^{j} \text{ occurs} \right) = 1 
    \end{align*}
    as $N\to \infty$, as required.
\end{proof}

It remains to prove the two claims that were deferred from the above proof.

\begin{proof}[Proof of \Cref{claim:intersec}]
Suppose that $\mathbb{B}_N^i \cap \mathbb{C}_N^i$ holds for some $\lfloor \log(N)\rfloor^2 > M$. We call the clusters of size at most $M$ the {\sl small clusters}, and the clusters of size strictly larger than $M$ the {\sl big clusters}.
	Note that every path from $\Lambda_{N_{i}-M}$ to $\Lambda_{N_{i}+M}^c$ needs to go from $\Lambda_{N_{i}-M}$, through both $\Delta_{N_{i}-M,N_{i}}$ and $\Delta_{N_{i},N_{i}+M}$, to $\Delta_{N_{i}+M,\infty}$, by the first two requirements in the definition of $\mathbb{B}_N^i$.
Meanwhile, there is no small cluster in $\Delta_{N_{i}-M,N_{i}}$ that is connected to both $\Lambda_{N_{i}-M}$ and $\Lambda_{N_{i}}^c$, by the first requirement of $\mathbb{C}_N^i$. So, each path that goes from $\Lambda_{N_{i}-M}$ to $\Delta_{N_{i}-M,N_{i}}$ to $\Delta_{N_{i},N_{i}+M}$ leaves $\Delta_{N_{i}-M,N_{i}}$ through a big cluster. Thus, by the third requirement in the definition of $\mathbb{B}_N^i$, each such path needs to take some edge $e$ to go to a small cluster $A$ in $\Delta_{N_{i} + j,N_{i}+j+1}$, for some $j\in \{0,\ldots,M-1\}$. However, by the last requirement in $\mathbb{C}_N^i$, this small cluster $A$ is connected only through the edge $e$ to $A^c$, and thus we can not continue the path. This shows that there is no open path from $\Lambda_{N_{i}-M}$ to $\Lambda_{N_{i}+M}^c$.
\end{proof}

\begin{proof}[Proof of \Cref{claim:probab}]
	We define the events $\mathbb{D}_N^i$ and $\mathbb{D}_N^{i,\star}$ by
	\begin{align*}
        &
        \mathbb{D}_N^{i,\star}  \coloneqq \Bigl\{ \left|\bigl\{x \in \Delta_{N_{i}-M,N_{i}} : \left|K_x\right| > M \bigr\} \right| \leq (\theta^\star +\eps ) \left|\Delta_{N_{i}-M,N_{i}}\right|\Bigr\}
        \\
        &
        \hspace{5mm}
		\cap \bigcap_{j=0}^{M-1} 
		\Bigl\{ \left|\bigl\{x \in \Delta_{N_{i}+j,N_{i}+j+1} : \left|K_x\right| > M \bigr\} \right| \leq (\theta^\star +\eps ) \left|\Delta_{N_{i}+j,N_{i}+j+1}\right|\Bigr\}, \text{ and }
        \\
        &
		\mathbb{D}_N^i  \coloneqq \Bigl\{ \left|\bigl\{x \in \Delta_{N_{i}-M,N_{i}} : \left|K_x\left(\Delta_{N_{i}-M,N_{i}}\right)\right| > M \bigr\} \right| \leq (\theta^\star +\eps ) \left|\Delta_{N_{i}-M,N_{i}}\right|\Bigr\}\\
        &
        \hspace{5mm}
		\cap \bigcap_{j=0}^{M-1} 
		\Bigl\{ \left|\bigl\{x \in \Delta_{N_{i}+j,N_{i}+j+1} : \left|K_x\left(\Delta_{N_{i} + j,N_{i} + j+1}\right)\right| > M \bigr\} \right| \leq (\theta^\star +\eps ) \left|\Delta_{N_{i}+j,N_{i}+j+1}\right|\Bigr\},
	\end{align*}
    respectively. The probability of the event $\mathbb{D}_N^{i,\star}$ converges to $1$ as $N\to \infty$ by the definition of $\theta^\star=\p_{J} \left( |K_0|>M\right)$ and the ergodic theorem (which applies to percolation on any amenable group \cite{lindenstrauss1999pointwise}). Since $\mathbb{D}_N^{i} \supseteq \mathbb{D}_N^{i,\star}$, the probability of the event $\mathbb{D}_N^{i}$ also converges to $1$ as $N\to \infty$.
	Let $\cF$ bet the sigma-Algebra generated by edges which have both endpoints in $\Delta_{N_{i}-M,N_{i}}$ or have both endpoints in $\Delta_{N_{i}+j,N_{i}+j+1}$ for some $j\in \{0,\ldots, M-1\}$, so that the event $\mathbb{D}_N^i$ is measurable with respect to $\cF$. The event $\mathbb{C}_N^i$ is neither increasing nor decreasing, so we can not directly apply the Harris-FKG inequality. However, conditioned on the sigma-Algebra $\cF$, both events $\mathbb{B}_N^i$ and $\mathbb{C}_N^i$ are decreasing, so that we can apply Harris-FKG to the conditional measure (which is still a product measure) to obtain that
	\begin{align*}
		\p_J \left( \mathbb{D}_N^i \cap \mathbb{B}_N^i \cap \mathbb{C}_N^i \right)
		& = \E_J \left[ \p_J \left( \mathbb{D}_N^i \cap \mathbb{B}_N^i \cap \mathbb{C}_N^i \mid \cF \right) \right]
		= \E_J \left[ \mathbbm{1}_{\mathbb{D}_N^i} \p_J \left(\mathbb{B}_N^i \cap \mathbb{C}_N^i \mid \cF \right) \right]\\
		&
		\geq \E_J \left[ \mathbbm{1}_{\mathbb{D}_N^i} \p_J \left(\mathbb{B}_N^i  \mid \cF \right)\p_J \left( \mathbb{C}_N^i \mid \cF \right) \right] .
	\end{align*}
Since $\p_J \left(\mathbb{D}_N^i\right) \to 1$ as $N\to \infty$, to prove that $\p_J \left( \mathbb{D}_N^i \cap \mathbb{B}_N^i \cap \mathbb{C}_N^i \right) \geq 0.5 N^{-\gamma}$ for all sufficiently large $N$ it suffices to prove that if $N$ is sufficiently large then
	\begin{align*}
		 \p_J \left( \mathbb{B}_N^i \mid \cF \right) (\omega)\geq  N^{-\gamma}, \qquad \text{and} \qquad \p_J \left(\mathbb{C}_N^i \mid \cF \right)(\omega)\geq 0.9
	\end{align*}
	for every $\omega \in \mathbb{D}_N^i$.

    For disjoint sets $A,B \subseteq \H_L$, we define the {\sl interaction} between $A$ and $B$ by
    \begin{equation*}
        J(A,B) \coloneqq \sum_{x \in A} \sum_{y\in B} J(\{x,y\}) .
    \end{equation*}
    Fix $N$ large and $\omega \in \mathbb{D}_N^i$. We first lower bound the conditional probability of $\mathbb{B}^i_N$. We upper bound the interaction between $  \Lambda_{N_{i} - M} $ and $ \Delta_{N_{i},\infty} $, between $ \Lambda_{N_{i}} $ and $ \Delta_{N_{i} + M,\infty}$, and between big clusters in $
	\mathcal{C}_{N_{i} - M,N_{i}}$ and big clusters in $ \mathcal{C}_{N_{i}+j,N_{i}+j+1}$ for each $j \in \{0,\ldots,M-1\}$; the event $\mathbb{B}^i_N$ holds exactly when all of the edges that we sum over in the definition of the different interactions are closed.
    The interaction between $\Lambda_{N_{i} - M} $ and $ \Delta_{N_{i},\infty} $ is upper bounded by
\begin{align}
J & \left( \Lambda_{N_{i} - M} , \Delta_{N_{i},\infty} \right) = \sum_{x \in \Lambda_{N_{i} - M}} \sum_{y \in \Delta_{N_{i},\infty} } J(\{x,y\})
\nonumber\\
&\hspace{5cm}\leq 
 L^{N_{i}-M}\sum_{j=0}^\infty \left|\Delta_{N_{i} + j,N_{i}+j+1} \right|  J \left(L^{N_{i}+j+1}\right)
 \nonumber\\
 &\hspace{5cm}\leq 
 L^{N_{i}-M}\sum_{j=0}^\infty \left(L^{N_{i}+j+1} - L^{N_{i}+j}\right) \frac{\beta^\star L \log \left(N_{i}+j+1\right)}{L^{2N_{i}+2j+2}} 
 \nonumber\\
 &\hspace{5cm}=
 L^{-M}\sum_{j=0}^\infty \frac{(L-1)\beta^\star \log \left(N_{i}+j+1\right)}{L^{1+j}}.\label{eq:bbB1}
\end{align}
Similarly, the interaction between $  \Lambda_{N_{i}}$ and $ \Delta_{N_{i}+M,\infty} $ is bounded by
\begin{align}
    & J \left( \Lambda_{N_{i}}, \Delta_{N_{i}+M,\infty} \right) = \sum_{x \in \Lambda_{N_{i}}} \sum_{ y \in \Delta_{N_{i}+M,\infty} } J( \{x,y\} )
   \nonumber \\&\hspace{4cm}\leq
    L^{N_{i}}\sum_{j=M}^\infty \left|\Delta_{N_{i} + j,N_{i}+ j+1} \right|  J \left(L^{N_{i}+j+1}\right)
    \nonumber\\&\hspace{4cm}\leq L^{N_{i}}\sum_{j=M}^\infty \left(L^{N_{i}+j+1} - L^{N_{i}+j}\right)  \frac{\beta^\star L \log \left(N_{i}+j+1\right)}{L^{2N_{i}+2j+2}}
    \nonumber \\&\hspace{4cm}= 
    \sum_{j=M}^\infty  \frac{(L-1)\beta^\star \log \left(N_{i}+j+1\right)}{L^{1+j} }.
    \label{eq:bbB2}
\end{align}
The two estimates \eqref{eq:bbB1} and \eqref{eq:bbB2} hold for every $\omega$, and do not require $\omega\in \mathbb{D}^i_N$.
Finally, for $\omega \in \mathbb{D}_N^i$, the interaction between big clusters in $
	\mathcal{C}_{N_{i} - M,N_{i}}$ and big clusters in $ \mathcal{C}_{N_{i}+j,N_{i}+j+1}$ for some $j \in \{0,\ldots,M-1\}$ is bounded by
    \begin{align}
    & \notag J \left( \left\{ x : x \in A, A \in \mathcal{C}_{N_i-M,N_i}, |A|> M \right\} , \bigcup_{j=0}^{M-1} \left\{ y : y \in B, B \in \mathcal{C}_{N_i+j,N_i+j+1}, |B|> M \right\} \right)
    \\
    & \notag = \sum_{j=0}^{M-1} J \left( \left\{ x : x \in A, A \in \mathcal{C}_{N_i-M,N_i}, |A|> M \right\} ,  \left\{ y : y \in B, B \in \mathcal{C}_{N_i+j,N_i+j+1}, |B|> M \right\} \right)
    \\
&\leq \sum_{j=0}^{M-1} (\theta^\star + \eps) |\Delta_{N_i-M,N_i}|  (\theta^\star + \eps) \left|\Delta_{N_{i}+j,N_{i}+j+1}\right| J \left(L^{N_{i}+j+1}\right)
\nonumber\\&\leq (\theta^\star + \eps)^2 L^{N_{i}} \sum_{j=0}^{M-1}  \left(L^{N_{i}+j+1} - L^{N_{i}+j}\right) \frac{\beta^\star L \log \left(N_{i}+j+1\right)}{L^{2N_{i}+2j+2}}
\nonumber\\&=(\theta^\star + \eps)^2  \sum_{j=0}^{M-1}  \frac{(L-1)\beta^\star \log \left(N_{i}+j+1\right)}{L^{1+j}}
\label{eq:bbB3}
    \end{align}
for every $\omega \in \mathbb{D}^i_N$, where we used that $\omega \in \mathbb{D}^i_N$ in the first inequality.

Let $F \subset \left\{ \{x,y\} : x,y \in V, x\neq y \right\}$ be the random (but $\cF$-measurable) set over which the sums in \eqref{eq:bbB1}, \eqref{eq:bbB2}, and \eqref{eq:bbB3} range, i.e.,
\begin{align*}
    &F = \left\{ \{x,y\} : x \in \Lambda_{N_{i}-M}, y \in \Delta_{N_{i},\infty} \right\}
    \cup \left\{ \{x,y\} : x \in \Lambda_{N_{i}}, y \in \Delta_{N_{i}+M,\infty} \right\}
    \\
    & \hspace{5cm}
    \cup \bigcup_{j=0}^{M-1} \ \bigcup_{\substack{ A \in \mathcal{C}_{N_i-M,N_i} : \\ |A|>M }} \ \bigcup_{\substack{ B \in \mathcal{C}_{N_i+j,N_i+j+1} : \\ |B|>M }} \left\{ \{x,y\} : x \in A, y \in B \right\}.
\end{align*}
Putting together the three estimates \eqref{eq:bbB1}, \eqref{eq:bbB2}, and \eqref{eq:bbB3} and using
the assumptions on $M, \beta^\star,\eps, \theta^\star$, and $\gamma$ imposed in \eqref{eq:M_beta*_assumptions} and \eqref{eq:gamma assumption}, we obtain that if $N$ is sufficiently large, then 
the sum of the three interactions bounded in \eqref{eq:bbB1}, \eqref{eq:bbB2}, and \eqref{eq:bbB3} is at most
\begin{equation*}
    \sum_{e \in F} J(e) \leq \gamma \log(N),
\end{equation*}
given $\cF$ when $\omega \in \mathbb{D}^i_N$.
Since, conditional on $\cF$, the events $\left\{e \text{ open}\right\}$ are independent for different edges $e \in F$, it follows that
\begin{align*}
    & \p_J(\mathbb{B}^i_N \mid \cF)(\omega)  = \prod_{e \in F} \left( 1 - \p_J( e \text{ open} \mid \cF)(\omega) \right)
    =
    \exp \left( - \sum_{e \in F} J(e) \right) \geq N^{-\gamma} 
\end{align*}
whenever $N$ is sufficiently large and $\omega \in \mathbb{D}^i_N$.

    It remains to bound the conditional probability of $\mathbb{C}^i_N$ for sufficiently large $N$, on the event that $\omega \in \mathbb{D}^i_N$.
    Let $C< \infty$ be a sufficiently large constant such that for all $k \in \N$ with $k\geq 2$ and $x \in \Delta_{k,k+1}$ one has \[\p_J \left(x \sim \Delta_{k,k+1}^c\right)\leq \frac{C \log(k)}{L^k}.\] This implies in particular that for all $0\leq j < M$ and
    $x \in \Delta_{N_i-M+j,N_i-M+j+1}$ one has
    \begin{equation*}
        \p_J\! \left( x \sim \Delta_{N_i-M,N_i}^c \right) 
        \leq
        \p_J\! \left( x \sim \Delta_{N_i-M+j,N_i+j+1}^c \right) \leq \frac{C \log \left( N_i-M+j \right)}{L^{N_i-M+j}} \leq \frac{C \log\left( N_i \right)}{L^{N_i-M}}.
    \end{equation*}
Writing $\tilde{\mathcal{C}}_{i,j}$
for the collection of open clusters inside $\Delta_{i,j}$ of size at most $M$ and using the BK-inequality, we deduce that
\begin{align*}
\p_J \left(\left(\mathbb{C}_N^i\right)^c \,\Big|\, \cF \right)\!(\omega) 
&\leq
		\p_J \left(
		 A \Leftrightarrow \Delta_{N_{i}-M,N_{i}}^c  \text{ for some } A \in  \tilde{\mathcal{C}}_{N_{i} - M,N_{i}} \mid	\cF \right)\!(\omega) 
         \\
         &\hspace{1.3cm}+
		\sum_{j=0}^{M-1} \p_J\left( A \Leftrightarrow \Delta_{N_{i}+j,N_{i}+j+1}^c  \text{ for some } A \in \tilde{\mathcal{C}}_{N_{i} +j,N_{i} +j+1}\right)\!(\omega)
        \\
        & \hspace{-1cm}
        \leq L^{N_i}  \left(\frac{ M C \log (N_i)}{L^{N_i-M}}\right)^2 + \sum_{j=0}^{M-1} L^{N_{i}+j+1} \left(\frac{ M C \log(N_i + j)}{L^{N_i + j}}\right)^2.
\end{align*}
This upper bound converges to zero as $N\to \infty$, concluding the proof.
\end{proof}

\begin{proof}[Proof of \Cref{theo:(dis)continuity}, part 3] It suffices to consider the case $d=1$.
	Let $\cJ$ be a proper family of kernels satisfying $\cJ \approx \frac{\log\log (n)}{n^2}$ and $\lambda_c(\cJ) \in (0,\infty)$, and let 
	\begin{equation*}
		\beta(\lambda) = \limsup_{|e|\to \infty} \cJ(\lambda,e) \frac{L^{-1}|e|^{2}}{\log\log(|e|)} .
	\end{equation*}
    As $\cJ \approx \frac{\log\log(n)}{n^2}$, we get that $\beta(\lambda)<\infty$ for all $\lambda\geq 0$. Further, since $\cJ$ is a proper family of kernels, assumption \eqref{proper:conti} implies that the function $\lambda \mapsto \beta(\lambda)$ is continuous on $(0,\infty)$. Applying Proposition \ref{propo:discont}, it follows that
	\begin{equation*}
		\theta(\lambda)^2 \beta(\lambda) = \p_\lambda \left(|K_0|=\infty\right)^2 \beta(\lambda) \in \left\{0\right\} \cup \left[1, \infty\right).
	\end{equation*}
	As $\theta(\lambda)=\p_\lambda \left(|K_0|=\infty\right)=0$ for small enough $\lambda$ and $\p_\lambda \left(|K_0|=\infty\right)>0$ for large enough $\lambda$ by assumption, this implies that $\lambda \mapsto \p_\lambda \left(|K_0|=\infty\right)$ is a discontinuous function in $\lambda$. As $\theta(\lambda) = \p_\lambda \left(|K_0|=\infty\right)$ is continuous from the right (being an infimum of the continuous increasing functions $\p_\lambda (|K_0|\geq m)$), it follows that $\theta(\lambda_c)^2 \beta(\lambda_c) \geq 1$. This shows that the phase transition is discontinuous as claimed.
\end{proof}

\section[Existence of a phase transition for a>1]{Existence of a phase transition for $a>1$}\label{sec:pt for large a}

In this section, we prove \Cref{theo:existence mixed}.
We begin by proving a number of elementary estimates concerning the probability that a certain renormalized version of the model is well-connected in \Cref{sec:finiteboxes} before using these estimates to implement our main renormalization argument in \Cref{sec:renormalization_step}.
Although the inequalities in Section \ref{sec:finiteboxes} hold true for general $a>0$, we only apply them with $a>1$.

\subsection{Connection probabilities for finitely many blocks}\label{sec:finiteboxes}

For $\ell,k \in \N$ with $\ell \leq k$, let $\Lambda_{\ell,k}$ denote the set of copies of the $\ell$-block $\Lambda_\ell$ that are contained in the $k$-block $\Lambda_k$. We write $\Lambda_{\ell,\infty}$ for the set of $\ell$-blocks in $\H_L$. For a block $\varpi \in \Lambda_{\ell,\infty}$, we also say that $\varpi$ is at level $\ell$. We will define a weighted graph structure on $\Lambda_{\ell,\infty}$ which corresponds in a certain sense to contracting each of the $\ell$-blocks in $\H_L$. 
Given $x,y \in \H_L$, we define the distance between the corresponding $\ell$-blocks $u=\Lambda_\ell(x)$ and $v=\Lambda_\ell(y)$ by $\|u-v\|_{\ell,\infty} = L^{-\ell}\|x-y\|$: this distance is well-defined independently of the choice of points $x \in \Lambda_k$ and $y\in \Lambda_k$ used to represent $u \in \Lambda_{\ell,k}$ and $v\in \Lambda_{\ell,k}$, respectively. For $u \in \Lambda_{\ell,\infty}$ we define $\Lambda_j(u) = \left\{v \in \Lambda_{\ell,\infty} : \|u-v\|_{\ell,\infty} \leq L^{j} \right\}$ as the ball of radius $L^j$ around $u$. For a set $A \subset \Lambda_{\ell,\infty}$, we write $\phi(A)$ for the set of vertices in $\mathbb{H}_L$ that correspond to the set $A$. Given $\ell\leq k$ and two disjoint subsets $A$ and $B$ of $\Lambda_{\ell,k}$, we introduce the notation
\[
J(A,B)\coloneqq \sum_{x\in \phi(A)} \sum_{y\in \phi(B)} J(\{x,y\}) 
\]
for the total weight of edges in the original model connecting the two sets of $\ell$-blocks $A$ and $B$. Since our percolation model is equivalent (up to identifying multiple parallel edges) to the model in which edges are included as a Poisson process with intensity $J$, we have that
\begin{equation}
\label{eq:Xi_large_deviation}
\p_J(A \nsim B) = \exp\left(-J(A,B) \right)
\end{equation}
for any disjoint sets of $\ell$-blocks $A$ and $B$.

Given the bond percolation configuration $\omega$ on $\mathbb{H}_L$, we write $\omega_{\ell,k}$ for the random graph with vertex set $\Lambda_{\ell,k}$ in which two distinct $\ell$-blocks are connected in $\omega_{\ell,k}$ if there is an edge of $\omega$ with one endpoint in each of the two blocks. Our first lemma shows that this graph is very well connected with high probability when $\ell$ is large and $k-\ell$ is a constant.
(Note that this lemma does not require any symmetry assumptions on the kernel $J$ defined as a function on unordered pairs of points on the hierarchical lattice.)

\begin{lemma}\label{lem:allbutone}
    Let $a,\delta>0$ and let $J$ be a kernel such that
    if $j$ is sufficiently large then $J(e) \geq (a+\delta) L\log(j)L^{-2j}$  for every edge $e$ with $|e|=L^j$. Then there exists $k_0 = k_0(\delta, L) < \infty$ such that for all $k \geq k_0$ there exists $N =N(k,J)$ and $C=C(k,J)$ such that
	\begin{align}\label{eq:allbutone}
		\p_{J} \left( \Lambda_{n,n+k}\setminus\{\Lambda_n(0)\} \textnormal{ is not connected off $\Lambda_n(0)$ in $\omega_{n,n+k}$} \right)
		\leq C n^{-\frac{a}{2}}.
	\end{align}
 for all $n \geq N$.
\end{lemma}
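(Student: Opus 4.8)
The plan is to show that the renormalized graph $\omega_{n,n+k}$ restricted to $\Lambda_{n,n+k}\setminus\{\Lambda_n(0)\}$ is connected with high probability by exhibiting a deterministic sufficient condition for connectivity and then bounding the probability that this condition fails. The natural sufficient condition is recursive/hierarchical: for each $j \in \{1,\ldots,k\}$ and each $j$-block $\varpi$ at level $n+j$ sitting inside $\Lambda_{n+k}$, declare $\varpi$ \emph{internally connected} if, among the $L$ sub-blocks of $\varpi$ at level $n+j-1$, all those that are themselves internally connected (or are the "special" block $\Lambda_n(0)$ at the bottom level, which we always treat as connectable) are pairwise joined by at least one edge in $\omega$. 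If every block of every level $n+1,\ldots,n+k$ inside $\Lambda_{n+k}$ is internally connected, then by induction on $j$ one checks that all of $\Lambda_{n,n+k}\setminus\{\Lambda_n(0)\}$ lies in a single connected component of $\omega_{n,n+k}$ not using $\Lambda_n(0)$ — this is essentially the same deterministic bookkeeping as in Claim~\ref{claim:niceness}, with "$\Lambda_n(0)$" playing the role of the single bad block.

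**The key steps**, in order, would be: (1) Formalize the deterministic claim that "all blocks at levels $n+1,\ldots,n+k$ internally connected" $\Rightarrow$ the conclusion of \eqref{eq:allbutone} fails to hold only if... — i.e., prove the induction giving connectivity off $\Lambda_n(0)$. (2) Estimate, for a fixed $j$-block $\varpi$ at level $n+j$, the probability it is \emph{not} internally connected: this is at most $\binom{L}{2}$ times the probability that a fixed pair of its level-$(n+j-1)$ sub-blocks has no connecting edge, which by \eqref{eq:Xi_large_deviation} equals $\exp(-J(A,B))$ where $A,B$ are the two sub-blocks. Since each contains at least one vertex — and in fact we only need the crude bound using a single vertex in each, or all $L^{n+j-1}$ vertices — we get $\exp(-J(A,B)) \le \exp(-(a+\delta)L\log(n+j)L^{-2(n+j)}\cdot L^{2(n+j-1)}) = (n+j)^{-(a+\delta)/L}$ once $n$ is large enough that the kernel lower bound applies; here we must be slightly careful and actually use that $|\phi(A)|=|\phi(B)|=L^{n+j-1}$ so that $J(A,B)\ge (a+\delta)L\log(n+j)L^{-2(n+j)}L^{2(n+j-1)} = (a+\delta)\log(n+j)/L$, giving failure probability $\le \binom{L}{2}(n+j)^{-(a+\delta)/L}$. (3) Union bound over all $\le L^{k} \cdot k$ relevant blocks: the total failure probability is at most $k L^{k}\binom{L}{2}\, n^{-(a+\delta)/L}$. (4) Choose $k_0$ (depending only on $\delta, L$) large enough that $(a+\delta)/L \cdot$ (something) $\ge a/2$ — wait, this is where I need to think about what actually controls the exponent.

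**The main obstacle** — and the reason the lemma is stated with the loss from $a$ down to $a/2$ and with $k \ge k_0(\delta,L)$ — is reconciling the exponent. The naive bound above gives exponent $(a+\delta)/L$, which is \emph{worse} than $a/2$ when $L$ is large, so the crude "single edge between two blocks" estimate is insufficient and the $k_0$ dependence cannot help a per-pair bound. The fix must be to use more of the geometry: instead of requiring that \emph{every} pair of live sub-blocks is directly joined, one requires only that the live sub-blocks are connected off any single one of them (mirroring the "allbutone" phrasing), which is a weaker event failing with much smaller probability — roughly the probability that \emph{two} of the pairwise connections are absent, i.e.\ order $(n+j)^{-2(a+\delta)/L}$ type bounds via a BK/second-moment argument, or alternatively one exploits that there are $L^{n+j-1}$ potential edges between two blocks and sums $J$ over all of them. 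Actually the cleanest route: between two level-$(n+j-1)$ sub-blocks the total kernel weight is $J(A,B) = L^{n+j-1}\cdot L^{n+j-1}\cdot J(L^{n+j}) \ge (a+\delta)\log(n+j)/L$, and to make a \emph{single} bad pair rare enough after a union bound over $\sim L^k k$ blocks we simply need $n$ large — the union bound constant $C(k,J)$ is allowed to absorb $L^k$. So the real point is: exponent $(a+\delta)/L$ is not $\ge a/2$ in general, so one must instead ask for the sub-blocks to be connected-off-one (allowing one to be isolated), whose failure probability involves \emph{two} missing pairwise connections and hence has exponent $\ge 2(a+\delta)/L \cdot$(no, still bad)... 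The honest answer is that one takes $k$ large and uses that a level-$k$ block fails to be "connected off one sub-block" only if its connectivity graph on $L$ vertices has two components one of which... — the exponent gain comes from needing $\ge L-1$ connections to survive, i.e.\ $\ge \binom{L-1}{2}$-type events, but I would need to work out the combinatorics; regardless, the structure of the proof is the deterministic hierarchical criterion + \eqref{eq:Xi_large_deviation} + union bound, with the constant $C(k,J)$ absorbing all $k$- and $L$-dependent factors and the exponent $a/2$ coming from a (mild) suboptimal use of the available kernel weight that is nonetheless enough for the downstream renormalization.
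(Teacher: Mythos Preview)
Your proposal has a genuine gap that you yourself identify but do not resolve: the hierarchical ``internally connected'' criterion, bounded level-by-level, gives a per-level failure probability of order $n^{-(a+\delta)/L}$ (since the interaction between two sibling sub-blocks at level $n+j$ is $L^{2(n+j-1)}J(L^{n+j}) \approx \frac{a+\delta}{L}\log n$), and $(a+\delta)/L < a/2$ for $L\ge 3$ when $\delta$ is small. Your suggested fixes (requiring two missing connections, or ``$\binom{L-1}{2}$-type events'') still only multiply the exponent by a bounded factor and cannot reach $a/2$ for large $L$. This is not a mild suboptimality to be absorbed in constants --- the exponent $a/2$ is exactly what the downstream argument needs, and Remark~\ref{rem:allbutone} shows it is sharp.

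The paper's approach is structurally different: rather than building connectivity hierarchically, it takes a direct union bound over all cuts $\emptyset\subsetneq A\subsetneq \Lambda_{n,n+k}\setminus\{\Lambda_n(0)\}$ and shows that for \emph{every} such $A$ the total interaction satisfies $J(A,\bar A)\ge \frac{a}{2}\log n$, hence $\p_J(A\nsim\bar A)\le n^{-a/2}$ by \eqref{eq:Xi_large_deviation}. The crucial point you are missing is that this interaction is summed across \emph{all} scales $\ell,\ell+1,\ldots,k-1$ where the cut is visible, and these contributions form a geometric-type series whose total is of order $\log n$ times a constant that can be pushed above $a/2$ by taking $k$ large (this is where $k_0(\delta,L)$ enters). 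The proof reduces to three explicit special shapes of $A$ (a block straddled by the cut away from the defect; a union of sibling blocks containing the defect; a union of sibling blocks not containing the defect) and then shows by an exchange argument that the general case dominates one of these. The inequality $m(L-m)\ge L/2$ and the careful treatment of the defect's effect on block counts are what produce the factor $1/2$.
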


It is important here that we get the sharp power, with the factor $1/2$, up to an $o(1)$ error. The presence of the ``defect'' (i.e., the fact that we are not allowed to use the block $\Lambda_n(0)$) also makes the analysis significantly more complicated; The analogous statement without the ``defect" is treated in inequality \eqref{eq:inside connection 1/2} below. Further, the factor of $\frac{1}{2}$ in the exponent is essentially optimal, as shown in the following Remark.

\begin{remark}\label{rem:allbutone}
    Let $L=2$, $a>0$, and let $J$ be a kernel on $\H_2$ such that
    $J(e) \leq a 2\log(j)2^{-2j}$  for every edge $e$ with $|e|=2^j$. Then for all fixed $k \in \N$ one has
	\begin{align*}
		\p_{J} \left( \Lambda_{n,n+k}\setminus\{\Lambda_n(0)\} \textnormal{ is not connected off $\Lambda_n(0)$ in $\omega_{n,n+k}$} \right)
		\geq c n^{-\frac{a}{2}}.
	\end{align*}
    for some constant $c=c(k)>0$.
\end{remark}

\begin{proof}
    Define the sets $A = \Lambda_{n,n+k-1} \setminus \{\Lambda_n(0)\}$ and $B = \Lambda_{n,n+k}\setminus \left( A \cup \{\Lambda_n(0) \} \right)$. Then $|A|=2^{k-1}-1,|B|=2^{k-1}$, and 
    \begin{align*}
        J(A,B) = |A|2^{n} |B| 2^{n} \frac{a 2 \log(n+k)}{\left(2^{n+k}\right)^2} = \left(2^{k-1}-1\right)2^{n} 2^{k-1} 2^{n} \frac{a 2 \log(n+k)}{\left(2^{n+k}\right)^2} \leq \frac{a\log(n+k)}{2},
    \end{align*}
    so that
    \begin{align*}
        & \p_{J} \left( \Lambda_{n,n+k}\setminus\{\Lambda_n(0)\} \text{ is not connected off $\Lambda_n(0)$ in $\omega_{n,n+k}$} \right)
        \\
        &
        \hspace{17mm} \geq \p_{J} ( A \nsim B ) = \exp \left( -J(A,B) \right)  \geq \exp \left( - \frac{a\log(n+k)}{2} \right) = (n+k)^{-\frac{a}{2}} \geq c n^{-\frac{a}{2}}
    \end{align*}
    for some constant $c=c(k)>0$.
\end{proof}

We proceed with the proof of Lemma \ref{lem:allbutone}.
\begin{proof}[Proof of Lemma~\ref{lem:allbutone}]
    We stress that all constants in this proof may depend on $k$, making a rather crude analysis possible. 
	For a set $A\subset \Lambda_{n,n+k} \setminus \left\{ \Lambda_n(0) \right\}$, we define the set $\bar{A} \subset \Lambda_{n,n+k} \setminus \left\{ \Lambda_n(0) \right\} $ by $\bar{A} = \Lambda_{n,n+k}\setminus\left(\{\Lambda_n(0)\}\cup A\right)$. By a union bound, we have that
	\begin{align*}
		\p_{J} \left( \Lambda_{n,n+k} \setminus \left\{ \Lambda_n(0) \right\} \text{ is not connected} \right) 
		\leq \sum_{ \emptyset \subsetneq A \subsetneq \Lambda_{n,n+k} \setminus \left\{ \Lambda_n(0) \right\}} \p_{J} \left(A \nsim \bar{A}\right)
	\end{align*}
	and thus it suffices to prove that $\p_{J} \left(A \nsim \bar{A}\right) \leq C n^{-\frac{a}{2}}$ for a constant $C$ depending on $k$ and $a$ only. In order to prove this, it suffices by \eqref{eq:Xi_large_deviation} to prove that
	\begin{align*}
		J(A,\bar{A})  \geq \frac{a}{2} \log(n)
	\end{align*}
    for all sufficiently large $n$.  We first prove this estimate when $A$ belongs to one of three special classes of sets, which we treat separately, before showing that the general case can be reduced to these three cases.

    \medskip
	
	\noindent \textbf{Case 1:}
	First assume that there exists $u \in \Lambda_{n,n+k}$ and $\ell\leq k-1$ such that 
	\begin{align}\label{eq:case1}
	\Lambda_{\ell}(u) \cap A \neq \emptyset,\qquad \Lambda_{\ell}(u) \cap \bar{A} \neq \emptyset,\qquad \text{ and }\qquad \Lambda_n(0) \notin \Lambda_{\ell}(u).
	\end{align}
	In this case, there exists $\tilde{\ell} \leq \ell$ and $\tilde{u} \in \Lambda_{\ell}(u)$ such that   $\Lambda_{\tilde{\ell}} (\tilde{u})$ intersects both $A$ and $\bar{A}$ and if we write $\Lambda_{\tilde{\ell}} (\tilde{u}) = \bigcup_{i=1}^{L} \Lambda_{\tilde{\ell}-1} (\tilde{u}_i)$ for the covering of the $\tilde{\ell}$-block at $\tilde{u}$ by $(\tilde{\ell}-1)$-blocks then we can order the $\tilde{u}_i$ so that
	\begin{equation*}
	\Lambda_{\tilde{\ell}-1} (\tilde{u}_1), \ldots, \Lambda_{\tilde{\ell}-1} (\tilde{u}_m) \subset A, \qquad \text{ and } \qquad \Lambda_{\tilde{\ell}-1} (\tilde{u}_{m+1}), \ldots, \Lambda_{\tilde{\ell}-1} (\tilde{u}_L) \subset \bar{A}
	\end{equation*}
    for some $m \in \{1,\ldots, L-1\}$. Indeed, if $\tilde{\ell}\in \{1,\ldots,\ell\}$ is minimal such that there exists $u \in \Lambda_{n,n+k}$ satisfying \eqref{eq:case1}, then $\Lambda_{\tilde{\ell}}(u)$ will have this property. 
    In this situation, we have that
	\begin{align*}
	&J\!\left( A, \Bar{A} \right) \geq \sum_{i=1}^{m} \sum_{j=m+1}^{L} J\!\left( \Lambda_{\tilde{\ell}-1} (\tilde{u}_i) , \Lambda_{\tilde{\ell}-1} (\tilde{u}_j) \right)
    \\
    &
    \hspace{4cm}
    \geq \sum_{i=1}^{m} \sum_{j=m+1}^{L} \left(L^{n+\tilde{\ell}-1}\right)^2 \frac{(a+\delta)L \log  (n+\tilde{\ell}-1)}{L^{2(n+\tilde{\ell})}}
    \\
    &
    \hspace{4cm}
	\geq   \frac{a m(L-m) \log (n)}{L} \geq \frac{a }{2}\log(n) ,
	\end{align*}
	where the last inequality holds since $m(L-m) \geq L/2$.

\medskip
    
	\noindent \textbf{Case 2:}
	Next, assume that $A$ is of the form $A= \Lambda_{\ell}(\Lambda_n(0))\setminus \{\Lambda_n(0)\} \cup \bigcup_{i=2}^{m} \Lambda_{\ell} (u_i)$
    for some $\ell \in \{0,\ldots,k-1\}$,  $m \in \{1,\ldots, L-1\}$, and $u_2,\ldots,u_m \in \Lambda_{n,n+k}$ such that the set $\Lambda_{\ell}(\Lambda_n(0))\setminus \{\Lambda_n(0)\}$ and the blocks $\Lambda_{\ell} (u_2),  \ldots, \Lambda_{\ell} (u_m)$ are mutually disjoint
    and $A \subset \Lambda_{\ell+1}(\Lambda_n(0))$. In this situation we have $|A| = L^{\ell}-1 + (m-1) L^{\ell} = m L^{\ell}-1$ and $|\Bar{A}   \cap \Lambda_{\ell+1} (\Lambda_{n}(0))| = (L-m)L^{\ell}$. Further, for all $j \in \{\ell+1,\ldots,k-1\}$, we have $A \subseteq \Lambda_{\ell+1}(\Lambda_{n}(0) ) \subseteq \Lambda_{j}(\Lambda_{n}(0))$ and thus 
    \begin{equation*}
        \left| \Bar{A}   \cap \left( \Lambda_{j+1} (\Lambda_{n}(0)) \setminus \Lambda_{j} (\Lambda_{n}(0)) \right) \right| = \left|\Lambda_{j+1} (\Lambda_{n}(0)) \setminus \Lambda_{j} (\Lambda_{n}(0)) \right| = (L-1)L^j .
    \end{equation*}
    Note that the case $\ell = k-1$ is slightly special, in the sense that there is no such $j\in \{\ell+1,\ldots,k-1\}$ satisfying the above condition, since $\{\ell+1,\ldots,k-1\} = \emptyset$ in this case. In the following calculation, the case $\ell=k-1$ corresponds to empty sums, but nevertheless, one checks that each step is valid both when $\ell=k-1$ as well as when $\ell \in \{0,\ldots,k-1\}$. The interaction between $A$ and $\Bar{A}$ can be bounded from below by
	\begin{align*}
        & J(A,\Bar{A}) 
        =
        J\left( A, \Bar{A}   \cap \Lambda_{\ell+1} (\Lambda_{n}(0)) \right) + \sum_{j=\ell+1}^{k-1} J\left( A, \Bar{A}   \cap \left( \Lambda_{j+1} (\Lambda_{n}(0)) \setminus \Lambda_{j} (\Lambda_{n}(0)) \right) \right)
		\\
        &\hspace{1cm}
		\geq (m L^{\ell}-1)L^n \left((L-m)L^{\ell}\right) L^n \frac{(a+\delta) L \log(n+\ell)}{L^{2(n+\ell +1)}} 
        \\
        &
        \hspace{5cm} +
		\sum_{j=\ell + 1}^{k-1} \left(m L^{\ell}-1\right) L^n (L-1)L^{j+n} \frac{(a+\delta) L \log(n+j)}{L^{2(n+j+1)}}\\
		&\hspace{1cm}
		\geq \left(m -\frac{1}{L^{\ell}} \right) (L-m)  \frac{(a+\delta)}{L} \log(n) +
		\sum_{j=\ell + 1}^{k-1} \left(m -\frac{1}{L^{\ell}} \right) (L-1) \frac{(a+\delta) \log(n)}{L^{1+j-\ell}}\\
		&\hspace{1cm}
		= \left(m -\frac{1}{L^{\ell}} \right) (L-m)  \frac{(a+\delta)}{L} \log(n) +
		\frac{\left(m -\frac{1}{L^{\ell}} \right) (L-1) (a+\delta) \log(n)}{L}
		\sum_{j=1}^{k-1-\ell}  \frac{1}{L^{j}}\\
		&\hspace{1cm}
		= \left(m -\frac{1}{L^{\ell}} \right) (L-m)  \frac{(a+\delta)}{L} \log (n) +
		\frac{\left(m -\frac{1}{L^{\ell}} \right) (L-1) (a+\delta) \log(n)}{L}
		\frac{1-L^{1-(k-\ell)}}{L-1}\\
		&\hspace{1cm}
		= \left(m -\frac{1}{L^{\ell}} \right) (L-m)  \frac{(a+\delta)}{L} \log (n) +
		\frac{\left(m -\frac{1}{L^{\ell}} \right)  (a+\delta) \log(n)}{L}
		(1-L^{1-(k-\ell)})\\
		&\hspace{1cm}
		= \left(m -\frac{1}{L^{\ell}} \right) \left(L-m + 1-L^{1-(k-\ell)}\right)  \frac{(a+\delta)}{L} \log (n).
	\end{align*}
As such, it suffices to show that for $k$ large enough and $\ell \in \{0,\ldots,k-1\}$ one has 
	\begin{equation}\label{eq:technical inequality}
		\left(m -\frac{1}{L^{\ell}} \right) \left(L-m + 1-L^{1-(k-\ell)}\right)  \frac{a+\delta}{L} \geq \frac{a}{2}.
	\end{equation}
	This is clear for $\ell$ large enough, as
 \begin{equation*}
     \left(m -\frac{1}{L^{\ell}} \right) \left(L-m + 1-L^{1-(k-\ell)}\right)  \frac{a+\delta}{L}
     \geq
     \left(m -\frac{1}{L^{\ell}} \right) \left(L-m \right)  \frac{a+\delta}{L}
     \geq (L-1)\frac{a}{L} \geq \frac{a}{2},
 \end{equation*}
 where the second-to-last inequality holds for $\ell$ large enough, since $m(L-m) \geq L-1$.
 Next, let us consider small $\ell$, say $\ell \leq \frac{k}{2}$. In this situation, we have
 \begin{equation*}
     \left(m -\frac{1}{L^{\ell}} \right) \left(L-m + 1-L^{1-(k-\ell)}\right) \geq \left(m -\frac{1}{L^{\ell}} \right) \left(L-m + 1-L^{1-k/2}\right)
 \end{equation*}
 and thus, in order to show \eqref{eq:technical inequality} for $k$ large enough, it suffices to show that
	\begin{equation}\label{eq:inequality half}
		\left(m -\frac{1}{L^{\ell}} \right) \left(L-m + 1\right)  \geq \frac{L}{2}.
	\end{equation}
    In order to show \eqref{eq:inequality half}, it suffices to consider $\ell \in \{0,1\}$. For $\ell=0$, we need to have $m\geq 2, L \geq 3$ by construction. One checks that $(m-1)(L-(m-1)) \geq L/2$ in this case. Let us consider $\ell=1$ now. Here
	\begin{equation*}
		\left(m -\frac{1}{L} \right) \left(L-m + 1\right) \geq \left(m - \frac{1}{2} \right) \left(L- m + 1\right)  \geq  \frac{L}{2}
	\end{equation*}
	where the last inequality follows by distinguishing the cases $m=1$ and $m\geq 2$.
	This shows \eqref{eq:inequality half} (and thus \eqref{eq:technical inequality}), which finishes the proof of Case 2.\\

    \medskip
    
	\noindent \textbf{Case 3:}
	Next, assume that $A$ is of the form $A= \bigcup_{i=1}^{m} \Lambda_{\ell} (u_i)$
    for some $\ell \in \{0,\ldots,k-1\}$,  $m \in \{1,\ldots, L-1\}$, and $u_1,\ldots,u_m \in \Lambda_{n,n+k}$ such that the blocks $\Lambda_{\ell} (u_1),  \ldots, \Lambda_{\ell} (u_m)$ are mutually disjoint, $A \subset \Lambda_{\ell+1}(\Lambda_n(0))$, but $\Lambda_n(0) \notin A$. In this situation we have $|A| = m L^{\ell}$ and $|\Bar{A} \cap \Lambda_{\ell+1} (\Lambda_{n}(0))| = (L-m)L^{\ell}-1$. Further, for all $j \in \{\ell+1,\ldots,k-1\}$, we have $A \subseteq \Lambda_{\ell+1}(\Lambda_{n}(0) ) \subseteq \Lambda_{j}(\Lambda_{n}(0))$ and thus 
    \begin{equation*}
        \left| \Bar{A}   \cap \left( \Lambda_{j+1} (\Lambda_{n}(0)) \setminus \Lambda_{j} (\Lambda_{n}(0)) \right) \right| = \left|\Lambda_{j+1} (\Lambda_{n}(0)) \setminus \Lambda_{j} (\Lambda_{n}(0)) \right| = (L-1)L^j .
    \end{equation*}
    Note that the case $\ell = k-1$ is slightly special, in the sense that there is no such $j\in \{\ell+1,\ldots,k-1\}$ satisfying the above condition, since $\{\ell+1,\ldots,k-1\} = \emptyset$ in this case. In the following calculation, the case $\ell=k-1$ corresponds to empty sums, but nevertheless, one checks that each step is valid both when $\ell=k-1$ as well as when $\ell \in \{0,\ldots,k-1\}$. The interaction between $A$ and $\Bar{A}$ can be bounded from below by
	\begin{align*}
        & J(A,\Bar{A}) 
        =
        J\left( A, \Bar{A}   \cap \Lambda_{\ell+1} (\Lambda_{n}(0)) \right) + \sum_{j=\ell+1}^{k-1} J\left( A, \Bar{A}   \cap \left( \Lambda_{j+1} (\Lambda_{n}(0)) \setminus \Lambda_{j} (\Lambda_{n}(0)) \right) \right)
		\\
        &\hspace{1cm}
		\geq (m L^{\ell})L^n \left((L-m)L^{\ell}-1\right) L^n \frac{(a+\delta) L \log(n+\ell)}{L^{2(n+\ell +1)}} 
        \\
        &
        \hspace{5cm} +
		\sum_{j=\ell + 1}^{k-1} \left(m L^{\ell}\right)L^n (L-1)L^{j+n} \frac{(a+\delta) L \log(n+j)}{L^{2(n+j+1)}}\\
		&\hspace{1cm}
		\geq m \left(L-m -\frac{1}{L^{\ell}} \right)  \frac{(a+\delta)}{L} \log(n) +
		\sum_{j=\ell + 1}^{k-1} m (L-1) \frac{(a+\delta) \log(n)}{L^{1+j-\ell}}\\
		&\hspace{1cm}
		= m \left(L-m -\frac{1}{L^{\ell}} \right)  \frac{(a+\delta)}{L} \log(n) +
		\frac{m (L-1) (a+\delta) \log(n)}{L}
		\sum_{j=1}^{k-1-\ell}  \frac{1}{L^{j}}\\
		&\hspace{1cm}
		= m \left(L-m -\frac{1}{L^{\ell}} \right)  \frac{(a+\delta)}{L} \log (n) +
		\frac{ m (L-1) (a+\delta) \log(n)}{L}
		\frac{1-L^{1-(k-\ell)}}{L-1}\\
		&\hspace{1cm}
		= m \left(L-m -\frac{1}{L^{\ell}} \right)  \frac{(a+\delta)}{L} \log (n) +
		\frac{ m  (a+\delta) \log(n)}{L}
		(1-L^{1-(k-\ell)})\\
		&\hspace{1cm}
		= m \left(L-m -\frac{1}{L^{\ell}} + 1-L^{1-(k-\ell)}\right)  \frac{(a+\delta)}{L} \log (n).
	\end{align*}
As such, it suffices to show that for $k$ large enough and $\ell \in \{0,\ldots,k-1\}$ one has 
	\begin{equation}\label{eq:technical inequality v2}
		m \left(L-m -\frac{1}{L^{\ell}} + 1-L^{1-(k-\ell)}\right)  \frac{(a+\delta)}{L} \geq \frac{a}{2}.
	\end{equation}
	This is clear for $\ell$ large enough, as
 \begin{equation*}
     m \left(L-m -\frac{1}{L^{\ell}} + 1-L^{1-(k-\ell)}\right)  \frac{a+\delta}{L}
     \geq
     m \left(L-m -\frac{1}{L^{\ell}} \right)  \frac{a+\delta}{L}
     \geq m (L-m) \frac{a}{L} \geq \frac{a}{2},
 \end{equation*}
 where the second-to-last inequality holds for $\ell$ large enough.
 Next, let us consider small $\ell$, say $\ell \leq \frac{k}{2}$. In this situation, we have
 \begin{equation*}
     m \left(L-m -\frac{1}{L^{\ell}} + 1-L^{1-(k-\ell)}\right) \geq m \left(L- m -\frac{1}{L^{\ell}} + 1-L^{1-(k/2)}\right)
 \end{equation*}
 and thus, in order to show \eqref{eq:technical inequality v2} for $k$ large enough, it suffices to show that
	\begin{equation}\label{eq:inequality half v2}
		m \left(L-m -\frac{1}{L^{\ell}} + 1\right)  \geq \frac{L}{2}.
	\end{equation}
    In order to show \eqref{eq:inequality half v2}, it suffices to consider $\ell = 0$. For $\ell=0$, we have that
	\begin{equation*}
		m \left(L-m -\frac{1}{L^{\ell}} + 1\right) = m \left( L - m \right) \geq L-1 \geq \frac{L}{2} .
	\end{equation*}
	This shows \eqref{eq:inequality half v2} (and thus \eqref{eq:technical inequality v2}), which finishes the proof of Case 3.\\
	
	\medskip
	
\noindent	\textbf{The general case: }
	Finally, we consider general sets $A$. 
	Instead of directly proving the statement for general sets $A, \bar{A}$, we will reduce the problem to one of the three cases from before. So let $A \subsetneq \Lambda_{n,n+k}\setminus \{\Lambda_n(0)\}$ be a non-empty set, and let 
	\begin{equation*}
		\ell = \inf \left\{j \in \{1,\ldots, k\} : \Lambda_j(\Lambda_n(0)) \cap A \neq \emptyset, \Lambda_j(\Lambda_n(0)) \cap \bar{A} \neq \emptyset \right\} .
	\end{equation*}
    Since $A \subsetneq \Lambda_{n,n+k} \setminus \left\{ \Lambda_n(0) \right\}$, we have $\ell \in \{1,\ldots,k\}$. By symmetry, we can assume without loss of generality that $\Lambda_{\ell-1}\left( \Lambda_{n}(0) \right) \setminus \left\{ \Lambda_{n}(0) \right\} \subseteq A$, interchanging the roles of $A$ and $\bar{A}$ if this is not the case.
    We split the set $A$ into an {\sl inner part} and an {\sl outer part} by
	\begin{align*}
		A_{\ell}^i \coloneqq A \cap \Lambda_{\ell}(\Lambda_n(0)) \qquad \text{ and }  \qquad A_{\ell}^o \coloneqq A \setminus A_{\ell}^i
	\end{align*}
	and in the same way split $\bar{A}$ into the inner and the outer part
	\begin{align*}
		\bar{A}_{\ell}^i \coloneqq  \bar{A} \cap \Lambda_{\ell}(\Lambda_n(0)) \qquad \text{ and } \qquad \bar{A}_{\ell}^o \coloneqq \bar{A} \setminus \bar{A}_{\ell}^i.
	\end{align*}
	The superscripts $i$ and $o$ stand for the {\sl inner} and {\sl outer} part of $A$ or $\bar{A}$, as appropriate.
	Observe that
	\begin{align*}
		J(A,\bar{A}) &= J(A_{\ell}^i,\bar{A}_{\ell}^i) + J(A_{\ell}^i,\bar{A}_{\ell}^o) + J(A_{\ell}^o,\bar{A}_{\ell}^i) + J(A_{\ell}^o,\bar{A}_{\ell}^o) .
	\end{align*}
	First, let us assume that $|A_{\ell}^i| \leq |\bar{A}_{\ell}^i|$. Then $J (A_{\ell}^i , A_{\ell}^o) \leq J (\bar{A}_{\ell}^i, A_{\ell}^o)$, since for given $A_{\ell}^o$ and $B \subset \Lambda_{\ell}(\Lambda_n(0))$ the quantity $J (B , A_{\ell}^o)$ is proportional to $|B|$. Thus we get that
	\begin{align*}
		J(A_{\ell}^i, A_{\ell}^o \cup \bar{A}_{\ell}^i \cup \bar{A}_{\ell}^o ) & =
		J(A_{\ell}^i, A_{\ell}^o ) + J(A_{\ell}^i, \bar{A}_{\ell}^i ) + J(A_{\ell}^i,  \bar{A}_{\ell}^o )\\
		&
		\leq J(\bar{A}_{\ell}^i, A_{\ell}^o ) + J(A_{\ell}^i, \bar{A}_{\ell}^i ) + J(A_{\ell}^i,  \bar{A}_{\ell}^o )\\
		&
		\leq J(\bar{A}_{\ell}^i, A_{\ell}^o ) + J(A_{\ell}^i, \bar{A}_{\ell}^i ) + J(A_{\ell}^i,  \bar{A}_{\ell}^o ) + J(A_{\ell}^o , \bar{A}_{\ell}^o) = J (A, \bar{A}).
	\end{align*}
    If $|A_{\ell}^i| > |\Bar{A}_{\ell}^i|$, then $J (\Bar{A}_{\ell}^i , \Bar{A}_{\ell}^o) \leq J (A_{\ell}^i, \Bar{A}_{\ell}^o)$. This implies that
        \begin{align*}
		J(\bar{A}_{\ell}^i, \bar{A}_{\ell}^o \cup A_{\ell}^i \cup A_{\ell}^o ) & = J(\bar{A}_{\ell}^i, \bar{A}_{\ell}^o  ) + J(\bar{A}_{\ell}^i, A_{\ell}^i  ) + J(\bar{A}_{\ell}^i, A_{\ell}^o )
        \\
        &
        \leq
        J(A_{\ell}^i, \bar{A}_{\ell}^o  ) + J(\bar{A}_{\ell}^i, A_{\ell}^i  ) + J(\bar{A}_{\ell}^i, A_{\ell}^o )
        \\
        &
        \leq
        J(A_{\ell}^i, \bar{A}_{\ell}^o  ) + J(\bar{A}_{\ell}^i, A_{\ell}^i  ) + J(\bar{A}_{\ell}^i, A_{\ell}^o ) + J(A_{\ell}^o, \bar{A}_{\ell}^o)
        = J (A, \bar{A}).
	\end{align*}
    In any case, we see that
    \begin{equation*}
         J (A, \bar{A}) \geq \min \left( J(A_{\ell}^i, \bar{A}_{\ell}^o \cup \bar{A}_{\ell}^i \cup A_{\ell}^o ), J(\bar{A}_{\ell}^i, \bar{A}_{\ell}^o \cup A_{\ell}^i \cup A_{\ell}^o ) \right) .
    \end{equation*}
    Let us first assume that $J(A_{\ell}^i, \bar{A}_{\ell}^o \cup \bar{A}_{\ell}^i \cup A_{\ell}^o ) \leq J(\bar{A}_{\ell}^i, \bar{A}_{\ell}^o \cup A_{\ell}^i \cup A_{\ell}^o )$. If there exists $u \in A_{\ell}^i$ such that $\Lambda_n(0) \notin \Lambda_{\ell-1}(u)$, $\Lambda_{\ell-1}(u) \cap A \neq \emptyset$, and $\Lambda_{\ell-1}(u) \cap \bar{A} \neq \emptyset$, then we get that
	\begin{align*}
		J(A,\bar{A}) \geq J(A_{\ell}^i, A_{\ell}^o \cup \bar{A}_{\ell}^i \cup \bar{A}_{\ell}^o ) \geq \frac{a}{2} \log(n)
	\end{align*}
    by Case 1. If there does not exist such a $u \in A_{\ell}^{i}$, then we know that for all $u \in \Lambda_{\ell} (\Lambda_n(0)) \setminus \Lambda_{\ell-1}(\Lambda_n(0))$ either $\Lambda_{\ell-1}(u) \subset A$ or $\Lambda_{\ell-1}(u) \subset \bar{A}$ , and that $\Lambda_{\ell-1}(\Lambda_n(0)) \setminus \{\Lambda_n(0)\} \subset A$ (which we assumed above). Thus, set $A_{\ell}^i$ is of the form as described in Case 2, and thus we also get that
    \begin{align*}
		J(A,\bar{A}) \geq J(A_{\ell}^i, A_{\ell}^o \cup \bar{A}_{\ell}^i \cup \bar{A}_{\ell}^o ) \geq \frac{a}{2} \log(n)
	\end{align*}
as required. We are left to consider the case where $J(A_{\ell}^i, \bar{A}_{\ell}^o \cup \bar{A}_{\ell}^i \cup A_{\ell}^o ) > J(\bar{A}_{\ell}^i, \bar{A}_{\ell}^o \cup A_{\ell}^i \cup A_{\ell}^o )$. Again, if there exists $u \in A_{\ell}^i$ such that $\Lambda_n(0) \notin \Lambda_{\ell-1}(u)$, $\Lambda_{\ell-1}(u) \cap A \neq \emptyset$, and $\Lambda_{\ell-1}(u) \cap \bar{A} \neq \emptyset$, then we get that
	\begin{align*}
		J(A,\bar{A}) \geq J(A_{\ell}^i, A_{\ell}^o \cup \bar{A}_{\ell}^i \cup \bar{A}_{\ell}^o ) \geq \frac{a}{2} \log(n)
	\end{align*}
    by Case 1. If there does not exist such a $u \in A_{\ell}^{i}$, then we know that for all $u \in \Lambda_{\ell} (\Lambda_n(0)) \setminus \Lambda_{\ell-1}(\Lambda_n(0))$ either $\Lambda_{\ell-1}(u) \subset A$ or $\Lambda_{\ell-1}(u) \subset \bar{A}$ , and that $\Lambda_{\ell-1}(\Lambda_n(0)) \setminus \{\Lambda_n(0)\} \subset A$. Thus, the set $\bar{A}_{\ell}^i$ is of the form as described in Case 3, and thus we also get that
    \begin{align*}
		J(A,\bar{A}) \geq J(\bar{A}_{\ell}^i, \bar{A}_{\ell}^o \cup A_{\ell}^i \cup A_{\ell}^o ) \geq \frac{a}{2} \log(n)
	\end{align*}
as required.
\end{proof} 

Our next lemma bounds the probability of the existence of a cluster of small density in $\omega_{n,n+k}$ with $k$ constant and $n$ large and deduces bounds on the probability that $\omega_{n,n+k}$ fails to be connected in the same regime.

\begin{lemma}\label{lem:semi-good}
	Let $a>0$ and let $J : \H_L \to \R_{\geq 0}$ be a kernel so that $J\left(e\right) \geq \frac{a L \log (j)}{L^{2j}}$ for every edge $e$ with $|e|=L^j$. Then, for all $k\in \N$ there exists a constant $C=C(k,J)$ such that \begin{align}\label{eq:inside connection 1/2}
	&\p_{J} \left( \Lambda_{n,n+k} \textnormal{ is not connected} \right) \leq C n^{-\tfrac{a}{2}} \textnormal{ and}\\
    \label{eq:outside connection 1}
    & \p_{J} \left( \Lambda_{n,n+k} \textnormal{ has a cluster of density $\gamma$} \right) \leq C  n^{-a(1-\gamma)}
	\end{align}
    for every $n\geq 1$ and $\gamma \in \{L^{-k},2L^{-k},\ldots,1-L^{-k},1\}$.
\end{lemma}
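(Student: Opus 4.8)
The plan is to derive both estimates from lower bounds on the interaction functional $J(\cdot,\cdot)$ together with the identity $\p_J(A\nsim B)=\exp(-J(A,B))$ from \eqref{eq:Xi_large_deviation}, and then to take, in each case, a union bound over the at most $2^{L^k}$ subsets of $\Lambda_{n,n+k}$. Throughout we use that the total weight of edges between two $n$-blocks of $\Lambda_{n,n+k}$ at $\omega_{n,n+k}$-distance $L^{j}$ (i.e.\ whose vertices are at $\H_L$-distance $L^{n+j}$) is at least $L^{2n}\cdot\frac{aL\log(n+j)}{L^{2(n+j)}}=\frac{aL\log(n+j)}{L^{2j}}\ge\frac{aL\log n}{L^{2j}}$; since $\log n\ge 0$, all bounds below hold trivially for $n=1$, so we may think of $n$ as large.

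For \eqref{eq:inside connection 1/2}: if $\omega_{n,n+k}$ is disconnected then $A\nsim\bar A$ for some nonempty proper $A\subsetneq\Lambda_{n,n+k}$ (take $A$ a union of connected components and $\bar A\coloneqq\Lambda_{n,n+k}\setminus A$), so by \eqref{eq:Xi_large_deviation} it suffices to prove $J(A,\bar A)\ge\frac a2\log n$ for every such $A$ and then sum $n^{-a/2}$ over the at most $2^{L^k}$ possibilities. Let $\ell\in\{1,\dots,k\}$ be minimal such that some $(n+\ell)$-block $W$ contains $n$-blocks of both $A$ and $\bar A$; by minimality each of the $L$ sub-$(n+\ell-1)$-blocks of $W$ lies entirely in $A$ or entirely in $\bar A$, say $m$ of them in $A$ and $L-m$ in $\bar A$ with $1\le m\le L-1$. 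Keeping only the edges between these sub-blocks, all of whose vertex pairs are at distance $L^{n+\ell}$, gives $J(A,\bar A)\ge m(L-m)\,L^{2(n+\ell-1)}\cdot\frac{aL\log(n+\ell)}{L^{2(n+\ell)}}=m(L-m)\,\frac{a\log(n+\ell)}{L}\ge\frac a2\log n$, using $m(L-m)\ge L-1\ge L/2$.

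For \eqref{eq:outside connection 1}: a cluster of density $\gamma$ is a set $C$ of $n$-blocks with $|C|=\gamma L^k$ and $C\nsim\bar C$, $\bar C\coloneqq\Lambda_{n,n+k}\setminus C$, so it suffices by \eqref{eq:Xi_large_deviation} and a union bound over the $\binom{L^k}{\gamma L^k}\le 2^{L^k}$ choices of $C$ to show $J(C,\bar C)\ge a(1-\gamma)\log n=\frac{a|\bar C|}{L^k}\log n$ for every nonempty proper $C$. Grouping the edges of the model by the level $j\in\{1,\dots,k\}$ at which their endpoints' $n$-blocks are separated gives $J(C,\bar C)\ge a\log(n)\,L^{-k}g_k(C)$, where $g_k(C)\coloneqq\sum_{j=1}^{k}L^{k-2j+1}\operatorname{cross}_j(C)$ and $\operatorname{cross}_j(C)$ is the number of pairs of $n$-blocks at distance $L^j$ with one block in $C$ and the other in $\bar C$. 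It thus remains to prove the deterministic inequality $g_k(C)\ge|\bar C|$ for all nonempty proper $C$, which I would prove by induction on $k$. The base case $k=1$ reads $|C|(L-|C|)\ge L-|C|$, i.e.\ $(|C|-1)(L-|C|)\ge0$. For the step, write $\Lambda_{n,n+k}=B_1\cup\dots\cup B_L$ for the decomposition into sub-$(n+k-1)$-blocks, set $C_i=C\cap B_i$, $s_i=|C_i|$, $t_i=L^{k-1}-s_i$, and observe that $g_k(C)=L\sum_i g_{k-1}(C_i)+L^{1-k}\sum_{i\ne i'}s_it_{i'}$ (cross pairs inside the $B_i$ give the first sum, cross pairs between distinct $B_i$ give the second). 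Using the inductive hypothesis $g_{k-1}(C_i)\ge t_i$ for nonempty $C_i$ and the identity $\sum_{i\ne i'}s_it_{i'}=|C|(L-1)L^{k-1}-|C|^2+\sum_i s_i^2$, the claim reduces to $(L-1)\sum_{i\in I_+}t_i+L^{1-k}\sum_{i\ne i'}s_it_{i'}\ge L^{k-1}|I_0|$ with $I_+=\{i:C_i\ne\emptyset\}$ and $I_0=\{i:C_i=\emptyset\}$; bounding $\sum_i s_i^2\ge|C|^2/|I_+|$ by Cauchy--Schwarz and $|C|\le|I_+|L^{k-1}$ trivially, this reduces in turn to $|I_+|\le L$, which always holds. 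Summing $n^{-a(1-\gamma)}$ over the at most $2^{L^k}$ choices of $C$ gives \eqref{eq:outside connection 1}.

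The main difficulty is this last deterministic estimate. A naive bound — using a single edge from each vertex of $\bar C$, or only the edges at the top scale $L^{n+k}$ — loses a factor polynomial in $L^k$, producing an exponent like $a\gamma/L^k$ or $a/2$ rather than $a(1-\gamma)$; obtaining the sharp exponent forces one to use the interactions at all scales $j=1,\dots,k$ simultaneously and to package them into the functional $g_k$. Moreover the inequality $g_k(C)\ge|\bar C|$ is tight (for instance $C$ a single $n$-block gives $g_k(C)=|\bar C|=L^k-1$), so the induction above has essentially no slack, and the Cauchy--Schwarz step at its end cannot be relaxed. In contrast to Lemma~\ref{lem:allbutone}, there is no ``defect'' here, which is precisely what makes tracking the leading constant feasible.
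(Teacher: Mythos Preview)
Your argument is correct and the key inequality $g_k(C)\ge|\bar C|$ (equivalently $J(C,\bar C)\ge a(1-\gamma)\log n$) is established cleanly; the induction step does work out exactly as you sketch once one carries through the algebra. In particular, after substituting $\sum_{i\in I_+}t_i=|I_+|L^{k-1}-|C|$ and your identity for $\sum_{i\ne i'}s_it_{i'}$, the desired inequality reduces to $|C|^2\le |I_+|\,L^{2k-1}$, which follows from $|C|\le |I_+|L^{k-1}$ and $|I_+|\le L$.

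Your route differs from the paper's in two ways. First, the paper derives \eqref{eq:inside connection 1/2} from \eqref{eq:outside connection 1} by summing over $\gamma\le 1/2$, whereas you prove \eqref{eq:inside connection 1/2} directly by the minimal-scale argument (this is essentially the ``Case~1'' computation from the proof of Lemma~\ref{lem:allbutone}, which is all that is needed in the absence of a defect). Second, and more substantively, for \eqref{eq:outside connection 1} the paper does not induct on $k$: instead it identifies a \emph{canonical} class of sets $A=\bigcup_{i=1}^m \Lambda_{\ell}(v_i)\subset\Lambda_{\ell+1}(v_1)$, computes $J(A,A^c)$ for those by summing shell-by-shell, and then reduces an arbitrary $A$ to this class via an inner/outer decomposition and a monotonicity argument ($J(A_i,A_o)\le J(B_i,A_o)$ when $|A_i|\le|B_i|$). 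Your inductive approach packages all scales into the functional $g_k$ and is arguably more structural---it makes transparent why the bound is tight for single $n$-blocks and why Cauchy--Schwarz enters---while the paper's reduction avoids the inductive bookkeeping at the cost of a somewhat ad hoc case split. Both approaches yield the same sharp exponent $a(1-\gamma)$ with the same $k$-dependent constant coming from the union bound.
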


Similarly to the statements of Lemma \ref{lem:allbutone} and Remark \ref{rem:allbutone}, respectively, we emphasize that it is important here to get the sharp power, with the factor $1/2$ in \eqref{eq:inside connection 1/2}, and that this power is essentially the best possible.

\begin{remark}
    Let $L=2, a>0$ and let $J : \H_2 \to \R_{\geq 0}$ be a kernel so that $J\left(e\right) \leq \frac{a 2 \log (j)}{2^{2j}}$ for every edge $e$ with $|e|=2^j$. Then, for all $k\in \N$ there exists a constant $c=c(k)>0$ such that \begin{align}\label{eq:inside connection 1/2 remark}
	&\p_{J} \left( \Lambda_{n,n+k} \textnormal{ is not connected} \right) \geq c n^{-\tfrac{a}{2}}
	\end{align}
    for every $n\geq 1$.
\end{remark}

\begin{proof}
    Define the sets $A=\Lambda_{n,n+k-1}$ and $B=\Lambda_{n,n+k}\setminus A$. Then 
    \begin{equation*}
        J(A,B) \leq 2^{n+k-1} 2^{n+k-1} \frac{a 2 \log(n+k)}{\left( 2^{n+k} \right)^2} = \frac{a \log(n+k)}{2}
    \end{equation*}
    so that
    \begin{equation*}
        \p_{J} \left( \Lambda_{n,n+k} \text{ is not connected} \right) \geq \p_{J} ( A \nsim B ) = \exp \left( -J(A,B) \right) \geq (n+k)^{-\tfrac{a}{2}} \geq c n^{-\tfrac{a}{2}}
    \end{equation*}
    for some constant $c=c(k)>0$.
\end{proof}

Next, we go to the proof of Lemma \ref{lem:semi-good}.

\begin{proof}[Proof of \Cref{lem:semi-good}]
We stress that all constants in this proof may depend on $k$, making a rather crude analysis possible. 
    First note that \eqref{eq:inside connection 1/2} directly follows from \eqref{eq:outside connection 1}, as whenever $\Lambda_{n,n+k}$ is not connected, it must contain a cluster of density at most $1/2$. Thus, we get that
    \begin{align*}
         \p_{J} \left( \Lambda_{n,n+k} \text{ is not connected} \right) &\leq \sum_{\substack{\gamma \in \{L^{-k},2L^{-k},\ldots\}: \\ \gamma \leq \frac{1}{2}}}
        \p_{J} \left( \Lambda_{n,n+k} \text{ has a cluster of density $\gamma$} \right) \\
        &
        \leq \sum_{\substack{\gamma \in \{L^{-k},2L^{-k},\ldots\}: \\ \gamma \leq \frac{1}{2}}} C n^{-a(1-\gamma)}
        \leq \widetilde{C} n^{-\frac{a}{2}}
    \end{align*}
    for some constant $\widetilde{C}$ depending on $C$ and $k$. In order to prove \eqref{eq:outside connection 1} it suffices to show that for each set $\emptyset \subsetneq A\subsetneq \Lambda_{n,n+k}$ with $|A|=\gamma L^k$
	\begin{equation}\label{eq:inside connection prob upperbound}
	\p_J \left( A \nsim \Lambda_{n,n+k}\setminus A \right) = \exp \left( - J \left( A , \Lambda_{n,n+k}\setminus A \right) \right)   \leq n^{-a (1-\gamma)},
	\end{equation}
    with the claim then following by a union bound over all sets $A \subseteq \Lambda_{n,n+k}$ with $|A|=\gamma L^k$.
    For a set $A \subseteq \Lambda_{n,n+k}$ define $A^c = \Lambda_{n,n+k}\setminus A$.
    In order to prove \eqref{eq:inside connection prob upperbound}, it suffices to show that for all non-empty sets $A \subset \Lambda_{n,n+k}$ with $|A|=\gamma L^k$, $\gamma \in \left\{ \frac{1}{L^k}, \frac{2}{L^k}, \ldots, \frac{L^k-1}{L^k} \right\}$ one has
	\begin{align}\label{eq:prob bound 1-eps}
	J \left(A , A^c\right) \geq a (1-\gamma) \log(n).
	\end{align}

    We first consider a special case and then reduce the general case to this special case.

    \medskip

    \noindent \textbf{The special case.}
    We start with the case where $A$ is of the form $A=\bigcup_{i=1}^{m} \Lambda_{\ell}(v_i)$ with $\ell \in \{0,\ldots,k-1\}, m \in \{1,\ldots,L-1\}$, and $v_1,\ldots,v_m \in \Lambda_{n,n+k}$ such that the sets $\Lambda_{\ell}(v_1), \ldots, \Lambda_{\ell}(v_m)$ are disjoint with $\Lambda_{\ell}(v_i) \subset \Lambda_{\ell+1}(v_1)$. Here $\gamma = \frac{|A|}{L^k}= m L^{\ell-k}$ and
    \begin{align}
        &\notag J(A,A^c)  = J (A, \Lambda_{\ell+1}(v_1)\setminus A) + \sum_{j=\ell+1}^{k-1}J (A, \Lambda_{j+1}(v_1) \setminus \Lambda_{j}(v_1) )\\
        \notag &\hspace{5mm} \geq m L^{n+\ell} (L-m) L^{n+\ell} \frac{a L \log(n+\ell+1)}{L^{2(n+\ell+1)}} + \sum_{j=\ell+1}^{k-1} m L^{n+\ell}(L-1) L^{n+j} \frac{a L \log(n+j+1)}{L^{2(n+j+1)}}\\
        \notag &\hspace{5mm} \geq (L-1)  \frac{a \log(n)}{L} + \sum_{j=\ell+1}^{k-1} (L-1)  \frac{a \log(n)}{L^{1+j-\ell}}
        = (L-1)  \frac{a \log(n)}{L}  \sum_{j=\ell}^{k-1}  \frac{1}{L^{j-\ell}}\\
        \notag &\hspace{5mm} = (L-1)  \frac{a \log(n)}{L}  \sum_{j=0}^{k-\ell-1}  \frac{1}{L^{j}} 
        = (L-1)  \frac{a \log(n)}{L} \frac{L-L^{1+\ell-k}}{L-1} = a \log(n) \left( 1- L^{\ell-k} \right)\\
        \label{eq:long calculation} &\hspace{5mm} \geq a \log(n) \left( 1- m L^{\ell-k} \right) = a \log(n) \left( 1- \gamma \right).
    \end{align}

    \noindent \textbf{The general case.}
    If $\emptyset \subsetneq A \subsetneq \Lambda_{n,n+k}$ is not of the form as described above, let $\ell \in \{1,\ldots,k-1\}$ and $u\in A$ be such that $\Lambda_{\ell}(u)\cap A \neq \emptyset$ and $\Lambda_{\ell}(u)\cap A^c \neq \emptyset$, where $\ell$ is the smallest such value. Let $B=A^c$. We divide the sets $A$ and $B$ into their {\sl inner} and {\sl outer} parts by
    \begin{align*}
        & A_i \coloneqq A \cap \Lambda_{\ell}(u), \qquad
        A_o \coloneqq A \cap \Lambda_{\ell}(u)^c = A \setminus A_i, \text{ and } \\
        & B_i \coloneqq B \cap \Lambda_{\ell}(u), \qquad
        B_o \coloneqq B \cap \Lambda_{\ell}(u)^c = B \setminus B_i,
    \end{align*}
    respectively. By the definition of $\ell$, this implies that we can write
    \begin{equation*}
        A_i = \bigcup_{j=1}^{m} \Lambda_{\ell-1}(v_j) \text{ and }
        B_i = \bigcup_{j=m+1}^{L} \Lambda_{\ell-1}(v_j)
    \end{equation*}
    for some $v_1,\ldots,v_L \in \Lambda_{n,n+k}$ with $v_1,\ldots,v_L \in \Lambda_{\ell}(u)$
    for some $m\in \{1,\ldots,L-1\}$. 
    Assume that $|B_i|\geq |A_i|$. Then one has $J(A_i,A_o)\geq J(B_i, A_o)$, as for a a set $S\subset \Lambda_{\ell}(u)$ the quantity $J(S,A_o)$ is proportional to the size of the input $|S|$, since $A_o \cap \Lambda_{\ell}(u) = \emptyset$. In particular, we get that $J \left( A_o, B_i  \right) = J \left( B_i , A_o \right) \geq J \left( A_i , A_o \right)$, which implies that
    \begin{align*}
        J(A,A^c) & = J \left( A_i \cup A_o, B_i \cup B_o \right)
        \\
        &
        =
        J \left( A_i, B_i  \right)
        +
        J \left( A_i , B_o \right)
        +
        J \left( A_o, B_i  \right)
        +
        J \left( A_o,  B_o \right)\\
        &
        \geq 
        J \left( A_i, B_i  \right)
        +
        J \left( A_i , B_o \right)
        +
        J \left( A_o, B_i  \right)\\
        &
        \geq 
        J \left( A_i, B_i  \right)
        +
        J \left( A_i , B_o \right)
        +
        J \left( A_i, A_o  \right) \\
        &
        = J \left( A_i, B_i \cup B_o \cup A_o\right) = J(A_i, A_i^c) \\
        & \overset{\eqref{eq:long calculation}}{\geq} a \log(n) \left( 1- \frac{|A_i|}{L^k} \right)
        \geq a \log(n) \left( 1- \frac{|A|}{L^k} \right) .
    \end{align*}
    For the second-to-last inequality, we used a comparison with \eqref{eq:long calculation}. This is possible as the set $A_i$ is of the form $A_i=\bigcup_{j=1}^{m} \Lambda_{\ell-1}(v_j) \subset \Lambda_{\ell}(u)$ as described in the special case above, and $B_i \cup B_o \cup A_o = \Lambda_{n,n+k}\setminus A_i$. So inequality \eqref{eq:long calculation} holds for the set $A_i$. This shows inequality \eqref{eq:prob bound 1-eps} for the case where $|B_i|\geq |A_i|$. The case where $|B_i|\leq |A_i|$ follows by symmetry in $A$ and $A^c$.
\end{proof}

The final lemma of this section estimates the probability that a specific large set of vertices in $\Lambda_{n,n+k}$ fails to connect to the annulus  $\Lambda_{n,n+2k}\setminus \Lambda_{n,n+k}$ by a single edge.

\begin{lemma}\label{lem:semi-good2}
	Let $a,\delta>0, n \in \N$, and let $J : \H_L \to \R_{\geq 0}$ be a kernel such that $J\left(L^j\right) \geq \frac{(a+\delta)L \log (j)}{L^{2j}}$ for every $j\geq n$. Then, for all $k\in \N$ for which $(a+\delta)( 1 - L^{-k} ) \geq a$ and for all subsets $A\subset \Lambda_{n,n+k}$ of size $|A|=\gamma L^k$ one has
    \begin{align}\label{eq:semi good 2}
	&\p_{J} \left( A \nsim \Lambda_{n,n+2k} \setminus \Lambda_{n,n+k} \right) \leq n^{-\gamma a}.
	\end{align}
\end{lemma}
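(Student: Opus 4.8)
The plan is to reduce this to a purely deterministic computation via the identity \eqref{eq:Xi_large_deviation}: it suffices to prove the lower bound
\[
J\bigl(A,\, \Lambda_{n,n+2k}\setminus\Lambda_{n,n+k}\bigr) \geq \gamma a \log(n),
\]
since then $\p_J\bigl(A\nsim \Lambda_{n,n+2k}\setminus\Lambda_{n,n+k}\bigr) = \exp\bigl(-J(A,\Lambda_{n,n+2k}\setminus\Lambda_{n,n+k})\bigr) \leq n^{-\gamma a}$ follows immediately. In contrast to Lemmas~\ref{lem:allbutone} and~\ref{lem:semi-good}, no optimization over the combinatorial shape of $A$ is required here, because $A$ and the target annulus sit at cleanly separated scales: every vertex of $\phi(A)$ lies in $\Lambda_{n+k}(\Lambda_n(0))$, every vertex $y$ of $\phi(\Lambda_{n,n+2k}\setminus\Lambda_{n,n+k})$ lies in exactly one of the shells $\Lambda_{n,n+j+1}\setminus\Lambda_{n,n+j}$ for $j\in\{k,\ldots,2k-1\}$, and for such a $y$ and any $x\in\phi(A)$ one has $\|x-y\|=L^{n+j+1}$ exactly --- since $x\in\Lambda_{n+j}(\Lambda_n(0))$ while $y\notin\Lambda_{n+j}(\Lambda_n(0))$, yet both lie in $\Lambda_{n+j+1}(\Lambda_n(0))$.

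Given this, I would split the interaction over the shells. Using $|\phi(A)|=\gamma L^{n+k}$ and $\bigl|\phi(\Lambda_{n,n+j+1}\setminus\Lambda_{n,n+j})\bigr|=(L-1)L^{n+j}$, and the hypothesis $J(L^{n+j+1})\geq (a+\delta)L\log(n+j+1)L^{-2(n+j+1)}\geq (a+\delta)L\log(n)L^{-2(n+j+1)}$, we obtain
\[
J\bigl(A,\,\Lambda_{n,n+2k}\setminus\Lambda_{n,n+k}\bigr)
=\sum_{j=k}^{2k-1}\gamma L^{n+k}\,(L-1)L^{n+j}\,J\bigl(L^{n+j+1}\bigr)
\geq \gamma(L-1)(a+\delta)\log(n)\sum_{j=k}^{2k-1}L^{-(j+1-k)},
\]
where the powers of $L$ collapse to $L^{(n+k)+(n+j)+1-2(n+j+1)}=L^{k-j-1}$ and where $\log(n+j+1)\geq\log(n)$ was pulled out of the sum. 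Summing the geometric series, $\sum_{j=k}^{2k-1}L^{-(j+1-k)}=\sum_{i=1}^{k}L^{-i}=(1-L^{-k})/(L-1)$, so the interaction is at least $\gamma(a+\delta)(1-L^{-k})\log(n)$, which is $\geq\gamma a\log(n)$ precisely by the standing hypothesis $(a+\delta)(1-L^{-k})\geq a$.

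There is no serious obstacle here: the only points requiring care are the bookkeeping of the exponents of $L$ and the elementary geometric sum, together with the observation --- which is what makes the bound tight --- that all pairwise distances between $\phi(A)$ and a fixed shell are equal, so the annular decomposition loses nothing and one recovers the sharp constant $\gamma a$ in the exponent.
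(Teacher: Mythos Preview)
Your proof is correct and follows essentially the same approach as the paper: both decompose the annulus into the shells $\Lambda_{n,n+j+1}\setminus\Lambda_{n,n+j}$ for $j\in\{k,\ldots,2k-1\}$, use $|\phi(A)|=\gamma L^{n+k}$ and $|\phi(\Lambda_{n,n+j+1}\setminus\Lambda_{n,n+j})|=(L-1)L^{n+j}$ together with the kernel lower bound, sum the resulting geometric series to obtain $\gamma(a+\delta)(1-L^{-k})\log(n)$, and conclude via \eqref{eq:Xi_large_deviation} and the hypothesis $(a+\delta)(1-L^{-k})\geq a$.
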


\begin{proof}[Proof of \Cref{lem:semi-good2}]
    We can compute that if $k$ is sufficiently large that $(a+\delta)( 1 - L^{-k} ) \geq a$ then the total weight of edges between $A$ and $\Lambda_{n,n+2k}\setminus\Lambda_{n,n+k}$ satisfies
    \begin{align*}
        &J\!\textbf{}\left( A, \Lambda_{n,n+2k}\setminus\Lambda_{n,n+k}  \right)
        =
        \sum_{j=k}^{2k-1} J\left( A, \Lambda_{n,n+j+1}\setminus\Lambda_{n,n+j}  \right)
        \\
        &\hspace{25mm}
        \geq
        \sum_{j=k}^{2k-1} \gamma L^k L^n (L-1) L^j L^n \frac{(a+\delta) L\log(n)}{L^{2(n+j+1)}}
        =
        \sum_{j=k}^{2k-1} \gamma (L-1)  \frac{(a+\delta) \log(n)}{L^{j-k+1}}\\
        &\hspace{25mm}
        =
        \frac{\gamma (L-1) (a+\delta) \log(n)}{L}
        \sum_{j=0}^{k-1}   \frac{1}{L^{j}}
        = 
        \frac{\gamma (L-1) (a+\delta) \log(n)}{L}
        \frac{L-L^{1-k}}{L-1} \\
        &\hspace{25mm}
        =
        \gamma (a+\delta) \log(n)
        (1-L^{-k}) \geq 
        \gamma a \log(n).
    \end{align*}
    This implies \eqref{eq:semi good 2} in light of \eqref{eq:Xi_large_deviation}.
\end{proof}

\subsection{The renormalization step}
\label{sec:renormalization_step}

 For $a>1$  and a kernel $J$ satisfying $J(L^j) \geq aL^{1-2j} \log(j)$ for all sufficiently large $j$, the results from Section \ref{sec:finiteboxes} imply that there exists $\eps =\eps(J)> 0$ and $k_0=k_0(J)<\infty$ such that for all $k\geq k_0$  there exists $n_0=n_0(k) < \infty$ such that for the following estimates hold for all $n\geq n_0$ and all $A\subset \Lambda_{n,n+k}$:
\begin{align}
     \label{eq:block1 removed 1/2} \p_{J} \left( \Lambda_{n,n+k}\setminus\{\Lambda_n(0)\} \text{ is not connected off $\Lambda_n(0)$ in $\omega_{n,n+k}$} \right)
		&\leq n^{-\frac{1}{2}-\eps}\\
     \label{eq:block2 not connected} \p_{J} \left( \Lambda_{n,n+k} \text{ is not connected in $\omega_{n,n+k}$} \right) &\leq n^{-\frac{1}{2}-\eps}, \\
    \label{eq:block3 small cluster}
     \p_{J} \left( \Lambda_{n,n+k} \text{ has a cluster of density at most $\gamma$ in $\omega_{n,n+k}$} \right) &\leq  n^{-(1-\gamma)-2\eps},\\ \text{and} \qquad
     \label{eq:block4 to nextblock} \p_{J} \left( A \nsim \Lambda_{n,n+2k} \setminus \Lambda_{n,n+k} \right) &\leq  n^{- L^{-k}|A|}.
\end{align}
We now use these estimates to complete the proof of \Cref{theo:existence mixed}. We begin by making some relevant choices of parameters.

\medskip
\noindent
\textbf{Choice of parameters.}
Fix $a>1$ and let $a^\star = 1 + \frac{a-1}{2}$ so that $1<a^\star < a$. Define the isometry-invariant kernel $\bar{J}$ by
	\begin{equation*}
	\bar{J}(e) = a^\star L |e|^{-2}\log\log(|e|).
	\end{equation*}
	Let $\eps = \eps(\bar{J}) \in (0,1)$ and $k_0=k_0(\bar{J})$ be as above, so that for each $k\geq k_0$ there exists $n_0=n_0(k)<\infty$ such that the estimates \eqref{eq:block1 removed 1/2} through \eqref{eq:block4 to nextblock} hold for this kernel whenever $n\geq n_0$. We fix  $N \in \N$, $\zeta \in (0,0.01)$, $\theta \in (0.99,1)$, and $r \geq k_0$ such that
	\begin{align*}
		N \eps > 10, \ (1-\zeta)^4 \theta^2 a > a^\star, \ 4NL^{-\frac{r}{2}} < 0.5 , \ 3 N L^{-r} < \zeta .
	\end{align*}
    Then there exists $n_0 = n_0(r)$ such that the estimates \eqref{eq:block1 removed 1/2} through \eqref{eq:block4 to nextblock} hold for all $n \geq n_0$.
    
	For each $g \geq 2$, let $\bar{r}_g$ be the unique integer multiple of $r$ for which $\log(g)^{10} g \leq L^{\bar{r}_g} < L^{r} \log(g)^{10} g$ and define $M_g \coloneqq r+\sum_{m=2}^{g} \bar{r}_m$. (The power 10 in the definition of $\bar{r}_g$ is arbitrary, and every large enough integer would work here.) Given an integer $g_0$ to be chosen, we also define a sequence  $\left(\theta_g\right)_{g\geq g_0}$ recursively by
	\begin{align*}
    & \theta_{g_0}=1 , \\
	& \theta_{g+1} = \theta_g - 2N \left( \log(g)^3 L^{-\bar{r}_{g+1} } + \log(g)^5 g^{-1-\eps} \right) \text{ for } g \geq g_0.
	\end{align*}
	We take $g_0$ large enough that $\bar{r}_{g+1} \leq \log(g)^2$ for all $g\geq g_0$ and such that $\theta_g \geq \theta$ for all $g \in \{g_0,g_0+1,\ldots\}$, which is possible since
    \begin{equation*}
        \inf_{g} \theta_{g} \geq 1 - \sum_{g=g_0}^{\infty} 2N \left( \log(g)^3 L^{-\bar{r}_{g+1} } + \log(g)^5 g^{-1-\eps} \right)
    \end{equation*}
    and the definition of the sequence $\bar{r}_g$ makes the series
    \begin{equation*}
        \sum_{g=2}^{\infty} 2N \left( \log(g)^3 L^{-\bar{r}_{g+1} } + \log(g)^5 g^{-1-\eps} \right)
    \end{equation*}
    convergent. For an $n$-block $\varpi \in \Lambda_{n,\infty}$, with $n \geq r$, we define the set of $(n-r)$-blocks contained in $\varpi$ by $\chi_r(\varpi) = \left\{ \varphi \in \Lambda_{n-r,\infty} : \varphi \subseteq \varpi \right\}$.

\medskip
\noindent
\textbf{Contracting blocks and largest clusters.}
    For a subset $U\subseteq \chi_r(\varpi)$, we define the two graphs $G^{\mathrm{bl}}\left( U \right) = \left( U , E^{\mathrm{bl}} \right)$ and $G^{\mathrm{mx}}\left( U \right) =  \left( U , E^{\mathrm{mx}} \right)$ as the undirected graphs with vertex sets $U$ and edge sets
    \begin{align*}
        E^{\mathrm{bl}} & = \left\{ \{ \varphi , \psi \} \subset U : \varphi \neq \psi, \varphi \sim \psi \right\}, \textnormal{ and} \\
        E^{\mathrm{mx}} & = \left\{ \{ \varphi , \psi \} \subset U : \varphi \neq \psi, K_{\max}(\varphi) \sim K_{\max}(\psi) \right\},
    \end{align*}
    respectively. So in particular the graph $G^{\mathrm{bl}}\left( U \right)$ can be constructed from the edges between different blocks $\varphi, \psi \in U$, whereas the graph $G^{\mathrm{mx}}\left( U \right)$ can only be constructed after observing the edges between different blocks $\varphi, \psi \in U$, as well as the largest open components $K_{\max}(\varphi)$ and $K_{\max}(\psi)$ inside the blocks. We will only consider the case where $U=\chi_r(\varpi)$ or where $U=\chi_r(\varpi)\setminus \{\varphi \}$ for some $\varphi \in \chi_r(\varpi)$.\\

    For $\varphi \in \chi_r(\varpi)$ and $A \subseteq \chi_{r}(\varphi)$ we write $A \sim \chi_{r}(\varpi)\setminus \{\varphi\}$ if there exist $x,y \in \H_L$ with $x \in \sigma$ for some $\sigma \in A$ and $y \in \varphi^\prime$ for some $\varphi^\prime \in \chi_r(\varpi) \setminus \{\varphi\}$ such that $x \sim y$. Otherwise, we write $A \nsim \chi_{r}(\varpi)\setminus \{\varphi\}$.
    For $\varpi \in \Lambda_{n,\infty}$, the graph $G^{\mathrm{bl}}\left( \chi_r(\varpi) \right)$ has exactly the same distribution as $\Lambda_{n-r,n}$ and for $\varphi \in \chi_r(\varpi)$, the graph $G^{\mathrm{bl}}\left( \chi_r(\varpi) \setminus \{\varphi\}\right)$ has exactly the same distribution as the graph $\Lambda_{n-r,n}\setminus \left\{ \Lambda_{n-r}(0) \right\}$. So by the choice of parameters, we have for all $n-r\geq n_0$, all $\varpi \in \Lambda_{n,\infty}$, and all $\varphi \in \chi_r(\varpi)$ that
    \begin{align}
     \label{eq:block1 removed 1/2 v2} \p_{\bar{J}} \left( G^{\mathrm{bl}}\left( \chi_r(\varpi) \setminus \{\varphi\} \right) \text{ is not connected} \right)
		&\leq (n-r)^{-\frac{1}{2}-\eps}\\
     \label{eq:block2 not connected v2} \p_{\bar{J}} \left( G^{\mathrm{bl}}\left( \chi_r(\varpi) \right) \text{ is not connected} \right) &\leq (n-r)^{-\frac{1}{2}-\eps}, \\
    \label{eq:block3 small cluster v2}
     \p_{\bar{J}} \left( G^{\mathrm{bl}}\left( \chi_r(\varpi) \right) \text{ has a cluster of density at most $\gamma$} \right) &\leq  (n-r)^{-(1-\gamma)-2\eps},\\ \text{and, for all $A \subset \chi_r(\varphi)$,} \qquad
     \label{eq:block4 to nextblock v2} \p_{\bar{J}} \left( A \nsim \chi_{r}(\varpi)\setminus \{\varphi\} \right) &\leq  (n-r)^{- L^{-r}|A|}.
\end{align}

\medskip
\noindent
\textbf{Good, bad, and mediocre blocks.}
We now iteratively contract blocks of the form $\Lambda_{n}(u)$ into vertices as follows. For each such block, let $K_{\max}\left( \Lambda_{n}(u) \right)$ be the largest open cluster inside $\Lambda_{n}(u)$. If this is not uniquely defined, pick one of the largest clusters inside the block in a manner that depends only on the configuration inside the block. We say that two blocks $\varpi \in \Lambda_{n,\infty}$ and $\varphi \in \Lambda_{m,\infty}$ with $\varpi \cap \varphi = \emptyset$ are connected if there exists an edge between the largest clusters of these blocks.
We say that the block $\Lambda_{M_{g_0}}(u)$ is {\sl good} if $\left|K_{\max} \left( \Lambda_{M_{g_0}}(u) \right) \right|= \left| \Lambda_{M_{g_0}}(u)  \right|$ and {\sl bad} otherwise.

Let $g \geq g_0$ and consider the percolation configuration restricted to $\Lambda_{M_g+\bar{r}_{g+1}}(w)$. 
We now describe how to merge blocks of the form $\Lambda_{M_g}(u) \in \Lambda_{M_{g},\infty}$ within the set $\Lambda_{M_{g}+\bar{r}_{g+1}}(w) \in \Lambda_{M_{g}+\bar{r}_{g+1},\infty}$.
We first explore the configuration inside each of the $L^{\bar{r}_{g+1}}$ many blocks of the form $\Lambda_{M_{g}}(u) \subseteq \Lambda_{M_{g}+\bar{r}_{g+1}}(w)$. After this, we reveal the percolation configuration inside blocks of the form $\Lambda_{M_{g}+r}(u)$, and connect blocks of the form $\Lambda_{M_{g}}(v)$ inside $\Lambda_{M_g+r}(u)$. Next, we consider blocks of the form $\Lambda_{M_{g}+2r}(u)$ as base graph and connect blocks of the form $\Lambda_{M_g+r}(v)$ inside $\Lambda_{M_g+2r}(u)$, etc.

We already defined what it means for blocks of the form $\Lambda_{M_{g_0}}(u)$ to be good/bad. For blocks at level $M_{g_0}$, there are only these two options. For blocks at level $M_{g_0}+kr$, with $k \in \N$, there will be three options, which are  \emph{good}, \emph{mediocre}, and \emph{bad}. Furthermore, for $g \geq g_0$, $kr \in \{ r, \ldots, \bar{r}_{g+1}\}$ and $j \in \{M_{g}, M_{g}+r ,\ldots,kr\}$, a good or mediocre block at level $M_{g}+kr$ can have a defect at level $j$.

Let $\varpi \in \Lambda_{M_{g}+kr,\infty}$ be a good or mediocre block, where $g \geq g_0$ and $kr \in \left\{  r , \ldots , \bar{r}_{g+1} \right\}$. For $j \in \{ M_{g}, M_{g}+ r , \ldots, M_{g} + kr \}$, we define the number of defects of $\varpi$ at level $j$ by
    \begin{equation*}
        \# \operatorname{Def}_j (\varpi) \coloneqq \left| \left\{ \varphi \in \Lambda_{j, \infty} : \varphi \subseteq \varpi,  \varphi  \text{ is bad} \right\} \right|.
    \end{equation*}
    
Given that all of this has been defined for blocks of the form $\Lambda_{M_g}(u)$, we define it for blocks of the form $\Lambda_{M_{g+1}}(v)$ as follows.

\begin{itemize}
    \item[(1.)] For $j = M_{g}+r, M_{g}+2r, \ldots, M_{g}+\bar{r}_{g+1} = M_{g+1}$:
    \begin{itemize}
        \item[] For all $\varpi \in \Lambda_{j,\infty}$ with $\varpi \subseteq \Lambda_{M_{g+1}}(v)$:
        \item[(1.0)] If there exists $m \in \{M_{g},M_{g}+r,\ldots,j-r\}$ such that $\# \operatorname{Def}_{m}(\varpi) > N$, then we say that $\varpi$ is bad.
    \end{itemize}
        The next step, (1.1), is broken into four cases (1.1A) to (1.1D).     These cases are mutually exclusive and exhaustive: For each block $\varpi$, exactly one of the cases described in (1.1A) through (1.1D) holds.
    \begin{itemize}
        \item[(1.1A)] If all elements $\varphi \in \chi_r (\varpi)$ are good.
        \begin{itemize}
            \item[$\bullet$]  If $G^{\mathrm{mx}} \left(\chi_r(\varpi)\right)$ is connected, then we say that $\varpi$ is good. 
            \item[$\bullet$] If $G^{\mathrm{mx}} \left(\chi_r(\varpi)\right)$ is not connected, we say that $\varpi$ is mediocre. More specifically, if the smallest connected component of $G^{\mathrm{mx}} \left(\chi_r(\varpi)\right)$ is of size $\gamma L^r$, then we say that $\varpi$ is $\gamma$-mediocre.
        \end{itemize}
        \item[(1.1B)]  If at least two elements $\varphi, \varphi^\prime \in \chi_r (\varpi)$ are not good, we say that $\varpi$ is bad.
        \item[(1.1C)]  If exactly one element $\varphi \in \chi_r (\varpi)$ is bad and all other elements are good. 
        \begin{itemize}
            \item[$\bullet$] If $G^{\mathrm{mx}} \left(\chi_r(\varpi) \setminus \{\varphi \}\right)$ is connected then we say that $\varpi$ is good. 
            \item[$\bullet$] 
            Otherwise, we say that $\varpi$ is bad.
        \end{itemize}
        \item[(1.1D)] If exactly one element $\varphi \in \chi_r (\varpi)$ is mediocre and all other elements are good. 
        \begin{itemize}
            \item[$\bullet$] If $G^{\mathrm{mx}} \left(\chi_r(\varpi)\setminus \{\varphi \}\right)$ is connected and for all connected components $A$ of $G^{\mathrm{mx}} \left(\chi_r(\varphi)\right)$ there exists $\sigma \in A, \psi \in \chi_r(\varpi)\setminus \{\varphi \}$ such that $K_{\max}(\sigma) \sim K_{\max}(\psi)$, then we say that $\varpi$ is good.
            \item[$\bullet$]
            Otherwise, we say that $\varpi$ is bad.
        \end{itemize}
    \end{itemize}
    \item[(2.)] If there exists $j \in \left\{M_{g-1}, M_{g-1} + r,\ldots, M_{g} - r\right\}$ such that among the $L^{\bar{r}_{g+1}}$ many blocks $\Lambda_{M_{g}}(u) \subseteq \Lambda_{M_{g+1}}(v)$, there are at least  $\log(g)^3 \lceil L^{\bar{r}_{g+1}+M_{g}-j} g^{-1-\eps} \rceil$ good or mediocre blocks that have a defect at level $j$, then we say that $\Lambda_{M_{g+1}}(v)$ is bad.
    \item[(3.)] If $\Lambda_{M_{g+1}}(v)$ is good or mediocre and there exists $j \in \{ M_{g}, M_{g}+r, \ldots, M_{g+1} - r \}$ and $\varpi \in \Lambda_{j, \infty}$ with $\varpi \subseteq \Lambda_{M_{g+1}}(v)$ such that $\varpi$ is bad, and $j$ is the largest such value, then we say that $\Lambda_{M_{g+1}}(v)$ has a defect at level $j$ and write $\operatorname{Def}\left(\Lambda_{M_{g+1}}(v)\right) = j$. We write $\operatorname{Def}\left(\Lambda_{M_{g+1}}(v)\right) = -\infty$ in case there does not exist such $j \in \{ M_{g}, M_{g}+r, \ldots, M_{g+1} - r \}$, and say that $\Lambda_{M_{g+1}}(v)$ has no defect.
\end{itemize}

The basic idea behind these definitions is that we want to allow ``minor defects'' to be ``fixed'' the scale after they occur, while making sure that the resulting ``good'' blocks maintain a high maximum cluster density. The steps (1.0) and (2.) make sure that there are not too many minor defects whose contributions add up to a major defect. These steps guarantee that good blocks always have open clusters of density approximately $\theta$, as Lemma \ref{lem:density evolu} below shows.

It might happen that a block $\varpi$ is declared both bad and good, or both bad and mediocre, by different steps of the above algorithm. If this happens, we say that $\varpi$ is bad.
After running the above algorithm for all feasible $j$, we note that the final block $\Lambda_{M_{g+1}}(v)$ will be either good, mediocre, or bad. Further, the block $\Lambda_{M_{g+1}}(v)$ can have a defect at level $j$, for every $j \in \{M_{g}, M_{g}+r , \ldots, M_{g+1} - r \}$.
Also note that if the condition in (1.0) is triggered for some $\varpi \in \Lambda_{M_{g}+j,\infty}$, then also all blocks $\phi \in \Lambda_{M_g+ j + kr,\infty}$ with $\phi \supseteq \varpi$ will be bad for all $k \in \N$ for which $M_{g}+ j + kr \leq M_{g+1}$. Before going to the next lemma, we introduce the following notation.

\begin{definition}
    We say that a set $U \subset \Lambda_{M_g+jr,\infty}$ is non-bad if all its elements are either good or mediocre. For a good or mediocre block $\varphi \in \Lambda_{M_g+jr,\infty}$, we define
    \begin{equation*}
        S(\varphi) = \begin{cases}
            |K_{\max}(\varphi)| & \text{ if } \varphi \text{ is good}\\
            \sum_{\varphi^\prime \in \chi_r(\varphi)} |K_{\max}(\varphi^\prime)| & \text{ if } \varphi \text{ is mediocre}
        \end{cases}.
    \end{equation*}
\end{definition}

Working with the quantity $S(\varphi)$ allows a unified notation to treat both good and mediocre blocks $\varphi$. The key property of $S(\cdot)$ is that it behaves consistently with the notions of ``good” and ``mediocre”. 
More specifically, if $\varpi$ has been declared good or mediocre in cases (1.1A), (1.1C), or (1.1D) of step (1.1) in the algorithm above, then
\begin{equation}\label{eq:S not bad}
    S(\varpi) \geq \sum_{\varphi \in \chi_r(\varpi) : \varphi \text{ not bad}} S(\varphi).
\end{equation}

Indeed, if $\varpi$ has been declared good in step (1.1A) of the algorithm above, then all $\varphi \in \chi_r(\varpi)$ are good and the largest clusters $\left( K_{\max}(\varphi) \right)_{\varphi \in \chi_r(\varpi)}$ merge into one large cluster. Thus
\begin{equation*}
    S(\varpi) = |K_{\max}(\varpi)| \geq \sum_{\varphi \in \chi_r(\varpi)} |K_{\max}(\varphi)| = \sum_{\varphi \in \chi_r(\varpi)} S(\varphi).
\end{equation*}
Further, if $\varpi$ has been declared mediocre in step (1.1A) of the algorithm above, then all $\varphi \in \chi_r(\varpi)$ are good and thus
\begin{equation*}
    S(\varpi) =  \sum_{\varphi \in \chi_r(\varpi)} |K_{\max}(\varphi)| = \sum_{\varphi \in \chi_r(\varpi)} S(\varphi).
\end{equation*}
If $\varpi$ has been declared good in step (1.1C) of the algorithm above, then all but one $\varphi \in \chi_r(\varpi)$ are good, and exactly one $\varphi \in \chi_r(\varpi)$ is bad. Further, the largest clusters $\left( K_{\max}(\varphi) \right)_{\varphi \in \chi_r(\varpi), \varphi \text{ good}}$ merge into one large cluster so that
\begin{equation*}
    S(\varpi) = |K_{\max}(\varpi)| \geq \sum_{\varphi \in \chi_r(\varpi) : \varphi \text{ good}} |K_{\max}(\varphi)| = \sum_{\varphi \in \chi_r(\varpi) : \varphi \text{ not bad}} S(\varphi).
\end{equation*}
Finally, if $\varpi$ has been declared good in step (1.1D) of the algorithm above, with $\varphi^\prime \in \chi_r(\varpi)$ being the unique mediocre element of $\chi_r(\varpi)$, then all the largest clusters $\left( K_{\max}(\varphi) \right)_{\varphi \in \chi_r(\varpi) \setminus \{ \varphi^\prime \}}$ merge into one large cluster and all the connected components of $G^{\mathrm{mx}}\left(\chi_r(\varphi^\prime)\right)$ connect to this large cluster. Thus
\begin{equation*}
    S(\varpi) = |K_{\max}(\varpi)| \geq \sum_{\varphi \in \chi_r(\varpi) \setminus \{\varphi^\prime \} } |K_{\max}(\varphi)| + \sum_{\sigma \in \chi_r(\varphi^\prime)} |K_{\max}(\sigma)| = \sum_{\varphi \in \chi_r(\varpi)} S(\varphi).
\end{equation*}

We proceed with Lemma \ref{lem:density evolu}, which gives lower bounds on $S(\varpi)$ for good or mediocre blocks $\varpi \in \Lambda_{M_g+jr,\infty}$.

\begin{lemma}\label{lem:density evolu}

    \emph{\textbf{Part (A):}}
    Let $\varpi \in \Lambda_{M_{g}+jr,\infty}$ for some $g \geq g_0$ and $jr \in \left\{ r,\ldots, \bar{r}_{g+1} \right\}$. 
    If $\varpi$ is good or mediocre, then there exists a non-bad set $U \subset \Lambda_{M_{g},\infty}$ with
    \begin{equation*}
        |U| \geq L^{jr} - \sum_{k=0}^{j-1} \# \operatorname{Def}_{M_g + kr}(\varpi) L^{kr}
    \end{equation*}
    such that $\sigma \subset \varpi$ for all $\sigma \in U$ and
    \begin{equation}\label{eq:S 2 inequalities Part A}
        S(\varpi) \geq \sum_{\sigma \in U} S(\sigma) \geq (1-\zeta)^2 \theta_g  L^{M_{g}+jr}.
    \end{equation}
    \emph{\textbf{Part (B):}}
    If $\varpi \in \Lambda_{M_g,\infty}$ is good, with a defect at level $n \in \{M_{g-1}, M_{g-1}+r,\ldots,M_{g} - r \}$, then
    \begin{equation}\label{part B ineq1}
        |K_{\max}(\varpi)| \geq \theta_g L^{M_{g}} - 2N L^n \geq (1-\zeta) \theta_g L^{M_g}  .
    \end{equation}
    If $\varpi \in \Lambda_{M_g,\infty}$ is mediocre, with a defect at level $n \in \{M_{g-1}, M_{g-1}+r,\ldots,M_{g} - 2r \}$, then
    \begin{equation}\label{part B ineq2}
        \sum_{\varphi \in \chi_{r}(\varpi)}  |K_{\max}(\varphi)| \geq \theta_g L^{M_{g}} - 2N L^n \geq (1-\zeta) \theta_g L^{M_g} .
    \end{equation}
    If $\varpi \in \Lambda_{M_g,\infty}$ is good or mediocre, without a defect, then 
    \begin{equation*}
        S(\varpi) \geq \theta_g L^{M_{g}}  .
    \end{equation*}
\end{lemma}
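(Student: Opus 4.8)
The plan is to prove Part (A) by induction on $j$ (the number of renormalization steps past level $M_g$) within a fixed generation $g$, using Part (B) as the base-of-generation input, and then close the loop by establishing Part (B) as a consequence of the final conclusion of Part (A) at the previous generation $g-1$ together with the "defect-fixing" mechanics of cases (1.1C) and (1.1D).

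First I would set up the induction for Part (A). For the base case $jr = r$, a good or mediocre block $\varpi \in \Lambda_{M_g + r, \infty}$ falls into one of the cases (1.1A), (1.1C), or (1.1D) (case (1.1B) and the step (1.0)/(2.) conditions would make it bad), and in each of these cases inequality \eqref{eq:S not bad} gives $S(\varpi) \geq \sum_{\varphi \in \chi_r(\varpi):\,\varphi\text{ not bad}} S(\varphi)$. Here each $\varphi \in \chi_r(\varpi)$ is a block at level $M_g$, which is good or mediocre (or bad, in which case it does not contribute and is counted in $\#\operatorname{Def}_{M_g}(\varpi) \le N$). For each such non-bad $\varphi$, I apply Part (B): if $\varphi$ has a defect, \eqref{part B ineq1} or \eqref{part B ineq2} gives $S(\varphi) \ge (1-\zeta)\theta_g L^{M_g}$; if it has no defect, $S(\varphi) \ge \theta_g L^{M_g}$. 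Summing over the at least $L^r - \#\operatorname{Def}_{M_g}(\varpi)$ non-bad blocks and taking $U$ to be this collection yields the count $|U| \ge L^r - \#\operatorname{Def}_{M_g}(\varpi)$ and the density bound $\sum_{\sigma \in U} S(\sigma) \ge (1-\zeta)\theta_g (L^r - \#\operatorname{Def}_{M_g}(\varpi)) L^{M_g}$. Using that $\#\operatorname{Def}_{M_g}(\varpi) \le N$ (else $\varpi$ is bad by step (1.0)) and the parameter choice $3NL^{-r} < \zeta$ gives $L^r - \#\operatorname{Def}_{M_g}(\varpi) \ge (1-\zeta)L^r$, hence $\sum_{\sigma\in U} S(\sigma) \ge (1-\zeta)^2 \theta_g L^{M_g + r}$ as required. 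For the induction step from $jr$ to $(j+1)r$, I again split into cases (1.1A), (1.1C), (1.1D) for $\varpi \in \Lambda_{M_g + (j+1)r,\infty}$, apply \eqref{eq:S not bad}, and for each non-bad $\varphi \in \chi_r(\varpi) \subset \Lambda_{M_g + jr, \infty}$ apply the inductive hypothesis to get a non-bad set $U_\varphi \subset \Lambda_{M_g,\infty}$ with the stated size bound; take $U = \bigcup_\varphi U_\varphi$ over the non-bad $\varphi$. The defect count bookkeeping is additive: $\#\operatorname{Def}_{M_g + kr}(\varpi) = \sum_{\varphi} \#\operatorname{Def}_{M_g + kr}(\varphi)$ for $k < j$, while the at most $N$ bad $\varphi$'s at level $M_g+jr$ contribute the $k=j$ term $\#\operatorname{Def}_{M_g+jr}(\varpi)L^{jr}$, and the telescoping of $\sum_{k=0}^{j-1}$ across the sub-blocks plus this new term reproduces the claimed formula. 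The density bound $\ge (1-\zeta)^2\theta_g L^{M_g+(j+1)r}$ follows exactly as in the base case since by the induction hypothesis each non-bad $\varphi$ already satisfies $S(\varphi) \ge \sum_{\sigma\in U_\varphi}S(\sigma) \ge (1-\zeta)^2\theta_g L^{M_g+jr}$, and there are at least $(1-\zeta)L^r$ such $\varphi$.

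Next I would prove Part (B). Take $\varpi \in \Lambda_{M_g,\infty} = \Lambda_{M_{g-1}+\bar r_g,\infty}$. If $\varpi$ has no defect, then every sub-block at every intermediate level down to $M_{g-1}$ is non-bad, so applying Part (A) at generation $g-1$ with $jr = \bar r_g$ gives a non-bad set $U \subset \Lambda_{M_{g-1},\infty}$ with $|U| = L^{\bar r_g}$ (no defects, so the sum subtracts nothing) and $S(\varpi) \ge \sum_{\sigma\in U}S(\sigma) \ge (1-\zeta)^2\theta_{g-1}L^{M_g}$. To upgrade $(1-\zeta)^2\theta_{g-1}$ to $\theta_g$ I invoke the definition $\theta_g = \theta_{g-1} - 2N(\log(g-1)^3 L^{-\bar r_g} + \log(g-1)^5 (g-1)^{-1-\eps})$ and the parameter inequalities ($N\eps > 10$, the summability of the correction series, $\theta_g \ge \theta > 0.99$), which are exactly calibrated so that $(1-\zeta)^2\theta_{g-1} \ge \theta_g$; I would spell out this one-line numerical check. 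If instead $\varpi$ is good or mediocre with a defect at level $n$, then $\varpi$ arose from case (1.1C) or (1.1D) at some scale, or inherited a defect via step (3.); in the good case (1.1C), $K_{\max}(\varpi)$ contains the merged largest clusters of all but one sub-block $\varphi \in \chi_r(\cdot)$ at the level just above $n$, and the one excluded (bad) sub-block has size at most $L^n$ — more precisely, iterating down, the total volume lost to defects at level $n$ is at most $N L^n$ by step (1.0), and crossing the single "mediocre repair" in (1.1D) at worst loses the largest-cluster merging across $L^{?}$ but this is still bounded by the same $2NL^n$ accounting. So $|K_{\max}(\varpi)| \ge \sum_{\sigma \in U}S(\sigma) - 2NL^n \ge \theta_g L^{M_g} - 2NL^n$, where the first inequality uses that all but a set of total volume $\le 2NL^n$ of the non-bad sub-blocks' clusters merge into $K_{\max}(\varpi)$, and the second uses the no-defect case bound applied to the "defect-free part." The final inequality $\theta_g L^{M_g} - 2NL^n \ge (1-\zeta)\theta_g L^{M_g}$ follows from $2NL^n \le 2NL^{M_g - r} \le \zeta\theta_g L^{M_g}$ using $n \le M_g - r$ and $2NL^{-r} < \zeta \le \zeta\theta$; similarly for the mediocre case \eqref{part B ineq2} with $\chi_r$ summation in place of $K_{\max}$.

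The main obstacle I anticipate is the careful bookkeeping of how defects propagate and accumulate across scales while crossing the "mediocre" repair steps — specifically, ensuring that the volume lost when a mediocre sub-block (case (1.1D)) is repaired is genuinely controlled by the same defect count that bounds the good-case loss, and that the additive identity for $\#\operatorname{Def}_{M_g+kr}(\varpi)$ across the decomposition $\chi_r(\varpi)$ is exactly what the recursion in the statement requires. The definitions of good/mediocre/bad and the steps (1.0), (1.1A)--(1.1D), (2.), (3.) are intricate, and the proof must verify that in each branch the relevant connectivity (clusters merging) holds so that $S(\varpi)$ indeed dominates the sum over non-bad sub-blocks; the content of \eqref{eq:S not bad}, already established in the text, does most of this work, but matching its hypotheses to the inductive case split requires care. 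The numerical checks themselves ($3NL^{-r}<\zeta$, $2NL^{-r}<\zeta$, $N\eps>10$, $(1-\zeta)^2\theta_{g-1}\ge\theta_g$, summability of the $\bar r_g$-series) are routine given the parameter choices made in the "Choice of parameters" paragraph.
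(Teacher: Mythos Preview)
Your Part (A) induction is essentially the paper's approach, but the density-bound step in your induction is miswritten: multiplying the inductive bound $S(\varphi)\ge(1-\zeta)^2\theta_g L^{M_g+jr}$ by the count $(1-\zeta)L^r$ yields $(1-\zeta)^3$, not $(1-\zeta)^2$, and the induction does not close. The fix (which the paper uses and you have the ingredients for) is not to use the inductive density bound on each $\varphi$, but to bound $\sum_{\sigma\in U}S(\sigma)$ directly via $|U|\cdot(1-\zeta)\theta_g L^{M_g}$ using Part (B) on each $\sigma\in U\subset\Lambda_{M_g,\infty}$, and then bound $|U|\ge(1-2NL^{-r})L^{(j+1)r}\ge(1-\zeta)L^{(j+1)r}$ from your defect bookkeeping. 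That way the factor $(1-\zeta)$ coming from $|U|$ is applied once, not once per scale.

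The serious gap is in Part (B). Your claim that ``$(1-\zeta)^2\theta_{g-1}\ge\theta_g$'' is false: $\zeta$ is a fixed positive constant (so $(1-\zeta)^2\le 1-\zeta<1$), while $\theta_g\ge\theta>0.99$ and $\theta_{g-1}\le 1$, so for instance $(1-\zeta)^2\theta_{g-1}\le(1-\zeta)^2<0.99\le\theta_g$ is entirely possible. The recursion $\theta_{g}=\theta_{g-1}-2N(\log(g-1)^3L^{-\bar r_g}+\log(g-1)^5(g-1)^{-1-\eps})$ is \emph{not} designed to absorb a multiplicative $(1-\zeta)^2$ loss per generation; it is calibrated to exactly match a different estimate. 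The paper's argument uses the \emph{sharper} Part (B) bound $S(\sigma)\ge\theta_{g-1}L^{M_{g-1}}-2NL^{\operatorname{Def}(\sigma)}$ (not merely $(1-\zeta)\theta_{g-1}L^{M_{g-1}}$) for each $\sigma\in U\subset\Lambda_{M_{g-1},\infty}$, decomposes the sum $\sum_{\sigma\in U}S(\sigma)$ according to the defect level of each $\sigma$, and then bounds the number of $\sigma$ with $\operatorname{Def}(\sigma)=ir$ using step (2.) of the algorithm: since $\varpi$ is not bad, there are at most $\log(g-1)^3\lceil L^{\bar r_g+M_{g-1}-ir}(g-1)^{-1-\eps}\rceil$ such $\sigma$. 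Summing these contributions produces precisely the correction $2NL^{M_g}(\log(g-1)^3L^{-\bar r_g}+\log(g-1)^5(g-1)^{-1-\eps})=(\theta_{g-1}-\theta_g)L^{M_g}$, which is the whole point of the definition of $\theta_g$. Your sketch never invokes step (2.), and without it there is no mechanism to recover $\theta_g$ from $\theta_{g-1}$. Likewise, in the defect case the term $-2NL^n$ comes from the $|U|$ bound of Part (A) (defects between $M_{g-1}$ and $M_g$), while the passage $\theta_{g-1}\to\theta_g$ comes from the step-(2.) defect-sum estimate at levels below $M_{g-1}$; these are two separate mechanisms that your ``$-2NL^n$ accounting'' conflates.
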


\begin{proof}
    We prove the statement via induction on $\ell \geq M_{g_0}$ with $r \mid \ell$. The statement is clear when $\ell=M_{g_0}$. For the induction step, assume that $\ell = M_{g}+jr$ with $jr \in \{r,2r,\ldots,\bar{r}_{g+1}\}$ and that the induction hypothesis holds for all $\ell^\prime \in \{M_{g_0},M_{g_0}+r,\ldots,\ell\}$. Let $\varpi \in \Lambda_{M_g+jr+r,\infty}$. 

    \medskip
    \noindent
    \textbf{Part (A):}
    For the inductive step from $M_{g} + jr$ to $M_{g}+(j+1)r$, we distinguish the cases $jr = \bar{r}_{g+1}$ and $jr \in \{  r, 2r, \ldots, \bar{r}_{g+1} - r \}$. 
    
    \medskip
    \noindent
    \textbf{Case 1:} $\left( jr= \bar{r}_{g+1} \right)$. First remember that $M_{g}+\bar{r}_{g+1}=M_{g+1}$. So by the induction assumption, all elements of Part (B) above hold for $\varphi \in \chi_r(\varpi) \subset \Lambda_{M_{g+1},\infty}$.
    If $\varpi \in \Lambda_{M_{g+1} + r,\infty}$ is good or mediocre, define the set $U \subseteq \Lambda_{M_{g+1},\infty}$ by \[U \coloneqq \left\{ \varphi \in \chi_r(\varpi) : \varphi \text{ is good or mediocre} \right\}.\] If there exists $\varphi \in \chi_r(\varpi)$ that is bad, then $|U|=L^r-1$. Otherwise, $|U|=L^r$. In any of the two cases, one easily checks that
    \begin{equation*}
        |U| = L^{jr} - \sum_{k=0}^{0} \# \operatorname{Def}_{M_{g+1} + kr}(\varpi) L^{kr} .
    \end{equation*}
    This implies the first inequality in \eqref{eq:S 2 inequalities Part A} in the light of \eqref{eq:S not bad}.
    By the induction assumption one has $S(\varphi)=|K_{\max}(\varphi)| \geq (1-\zeta) \theta_{g+1} L^{M_{g+1}}$ for all good or mediocre $\varphi \in U$. If $\varpi$ is good or mediocre, then $|U|\geq L^{r}-1$. Thus, we get that
    \begin{multline*}
        S(\varpi) \geq \sum_{\varphi \in U} S(\varphi) \geq \sum_{\varphi \in U} (1-\zeta) \theta_{g+1} L^{M_{g+1}}\\ \geq \left( L^r- 1 \right) (1-\zeta) \theta_{g+1} L^{M_{g+1}} 
        \geq
        (1-\zeta)^2 \theta_{g+1} L^{M_{g+1} + r} ,
    \end{multline*}
    where we used that $L^{-r}< \zeta$. This shows the second inequality in inequality \eqref{eq:S 2 inequalities Part A}. 

    \medskip
    \noindent
    \textbf{Case 2(a):} $\left( jr \in \{0,r,\ldots,\bar{r}_{g+1} -  r\} , \varpi \text{ good} \right)$. 
    We start with the case where all $\varphi \in \chi_{r}(\varpi)$ are good or where exactly one is mediocre. By the induction assumption for each $\varphi \in \chi_{r}(\varpi)$ there exists a non-bad set $U_{\varphi} \subseteq \Lambda_{M_g,\infty}$ such that
    \begin{equation*}
        \bigcup_{\sigma \in U_\varphi } \sigma \subseteq \varphi, \ S(\varphi) \geq \sum_{\sigma \in U_\varphi} S(\sigma) , \text{ and } \ |U_\varphi|  \geq L^{jr} - \sum_{k=0}^{j-1} \# \operatorname{Def}_{M_g + kr}(\varphi) L^{kr} .
    \end{equation*}
    Define the set $U \coloneqq \bigcup_{\varphi \in \chi_r(\varpi)} U_{\varphi}$. Since $\varpi$ is good, and all $\varphi \in \chi_r(\varpi)$ are not bad, one gets
    \begin{align*}
        & \#\operatorname{Def}_{M_g + jr}(\varpi) = \sum_{\varphi \in \chi_r(\varpi)} \# \operatorname{Def}_{M_g + jr}(\varphi) = 0 , \text{ and }
    \\
        &\# \operatorname{Def}_{M_g + kr}(\varpi) = \sum_{\varphi \in \chi_r(\varpi)} \# \operatorname{Def}_{M_g + kr}(\varphi) \leq N \ \text{ for all } k \in \{0,\ldots,j\} .
    \end{align*}
    Thus, for the set $U$, we have that
    \begin{align*}
        & |U| = \sum_{\varphi \in \chi_{r}(\varpi)} |U_\varphi|  = \sum_{\varphi \in \chi_r(\varpi)} \left( L^{jr} - \sum_{k=0}^{j-1} \# \operatorname{Def}_{M_g + kr}(\varphi) L^{kr} \right)  \\
        &
        \hspace{5cm}
        = \sum_{\varphi \in \chi_r(\varpi)} \left( L^{jr} - \sum_{k=0}^{j} \# \operatorname{Def}_{M_g + kr}(\varphi) L^{kr} \right)
        \\
        &
        \hspace{5cm}
        = L^r L^{jr} - \sum_{\varphi \in \chi_r(\varpi)} \sum_{k=0}^{j} \# \operatorname{Def}_{M_g + kr}(\varphi) L^{kr}
        \\
        &
        \hspace{5cm}
        = L^{(j+1)r} - \sum_{k=0}^{j} \# \operatorname{Def}_{M_g + kr}(\varpi) L^{kr}
        \\
        &
        \hspace{5cm}
        \geq L^{(j+1)r} - \sum_{k=0}^{j} N L^{kr} = L^{(j+1)r} \left( 1 - \sum_{k=0}^{j} N L^{kr-(j+1)r} \right) .
    \end{align*}
    Further, if $\varpi$ is good and all $\varphi \in \chi_r(\varpi)$ are not bad, then
    \begin{align*}
        & S(\varpi) = |K_{\max}(\varpi)| \geq \sum_{\varphi \in \chi_r(\varpi)} S(\varphi) \geq \sum_{\varphi \in \chi_r(\varpi)} \sum_{ \sigma \in U_\varphi} S(\sigma)
        =
        \sum_{ \sigma \in U} S(\sigma) \geq |U| (1-\zeta) \theta_g L^{M_g}
        \\
        &
        \hspace{3cm}
        \geq
        (1-\zeta) \theta_g L^{M_{g}+(j+1)r} \left(1 - \sum_{k=0}^{j} N L^{kr-(j+1)r} \right) 
        \\
        &
        \hspace{3cm}
        \geq
        (1-\zeta) \theta_g L^{M_{g}+(j+1)r} \left(1 - 2N L^{-r} \right)
        \geq
        (1-\zeta)^2 \theta_g L^{M_{g}+(j+1)r},
    \end{align*}
    where we used the assumption $3 N L^{-r} < \zeta$ in the last inequality.
    
    We are left to show the induction step in the case where all but one $\varphi \in \chi_{r}(\varpi)$ are good and exactly one is bad. Say that $\tilde{\varphi} \in \chi_r(\varpi)$ is the unique element of $\chi_r(\varpi)$ that is bad. Then
    \begin{align*}
        & \# \operatorname{Def}_{M_g + kr}(\varpi) = \sum_{\varphi \in \chi_r(\varpi)} \# \operatorname{Def}_{M_g + kr}(\varphi) \geq  \sum_{\varphi \in \chi_r(\varpi) \setminus \{\tilde{\varphi} \}} \# \operatorname{Def}_{M_g + kr}(\varphi) \\
        & \text{ for all } k \in \{0,\ldots,j-1\}, \text{ and } 
        \# \operatorname{Def}_{M_g + jr}(\varpi) = 1.
    \end{align*}
    By the induction assumption, for each good $\varphi \in \chi_{r}(\varpi)$ there exists a non-bad set $U_{\varphi} \subseteq \Lambda_{M_g,\infty}$ such that
    \begin{equation*}
        \bigcup_{\sigma \in U_\varphi } \sigma \subseteq \varphi, \ S(\varphi) \geq \sum_{\sigma \in U_\varphi} S(\sigma) , \text{ and } \ |U_\varphi|  \geq L^{jr} - \sum_{k=0}^{j-1} \# \operatorname{Def}_{M_g + kr}(\varphi) L^{kr}  .
    \end{equation*}
    Now define the set $U \coloneqq \bigcup_{\varphi \in \chi_r(\varpi) \setminus \{\tilde{\varphi}\}}$. Since $\varpi$ is good, for each $k \in \{M_{g}, M_{g}+r,\ldots,M_{g}+(j-1)r\}$, there can be at most $N$ many vertices $\varphi \in \chi_r(\varpi)$ such that $\varphi$ has a defect at level $k$.
    Thus, we can bound the size of $U$ from below by
    \begin{align}
        & \notag |U| = \sum_{\varphi \in \chi_{r}(\varpi) \setminus \{\tilde{\varphi}\}} |U_\varphi| 
        \geq \sum_{\varphi \in \chi_{r}(\varpi) \setminus \{\tilde{\varphi} \} } \left( L^{jr} - \sum_{k=0}^{j-1} \# \operatorname{Def}_{M_g + kr}(\varphi) L^{kr} \right)
        \\
        & \notag
        \hspace{5cm}
        \geq \sum_{\varphi \in \chi_{r}(\varpi) \setminus \{\tilde{\varphi} \} } L^{jr}  - \sum_{\varphi \in \chi_{r}(\varpi) \setminus \{\tilde{\varphi} \} } \sum_{k=0}^{j-1}  \# \operatorname{Def}_{M_g + kr}(\varphi) L^{kr}
        \\
        & \notag
        \hspace{5cm}
        \geq
        L^{(j+1)r} - L^{jr}  - \sum_{\varphi \in \chi_{r}(\varpi)  } \sum_{k=0}^{j-1}  \# \operatorname{Def}_{M_g + kr}(\varphi) L^{kr}
        \\
        & \notag
        \hspace{5cm}
        =
        L^{(j+1)r} - \sum_{k=0}^{j}  \# \operatorname{Def}_{M_g + kr}(\varpi) L^{kr}
        \\
        & \label{eq:U set lower bound}
        \hspace{5cm}
        \geq
        L^{(j+1)r} - \sum_{k=0}^{j}  N L^{kr} .
    \end{align}
    Further, as $|K_{\max}(\sigma)| \geq (1-\zeta)\theta_g L^{M_{g}}$ for all good $\sigma \in U$ one also readily checks that
    \begin{align*}
        &S(\varpi) = |K_{\max}(\varpi)| \geq \sum_{\varphi \in \chi_r(\varpi) \setminus \{\tilde{\varphi}\} } S(\varphi) \geq \sum_{\varphi \in \chi_r(\varpi) \setminus \{\tilde{\varphi}\}} \sum_{\sigma \in U_\varphi} S(\sigma)
        =
        \sum_{\sigma \in U} S(\sigma)
        \\
        &
        \hspace{3cm}
        \overset{\eqref{eq:U set lower bound}}{\geq}
        \left( L^{(j+1)r} - \sum_{k=0}^{j}  N L^{kr} \right) (1-\zeta) \theta_g L^{M_{g}}
        \\
        &
        \hspace{3cm}
        \geq
        (1-\zeta) \theta_g L^{M_{g} + (j+1)r} \left( 1 - \sum_{k=0}^{j} N L^{kr-(j+1)r} \right) 
        \\
        &
        \hspace{3cm}
        \geq
        (1-\zeta) \theta_g L^{M_{g} + (j+1)r} \left( 1 - 2 N L^{-r} \right) 
        \geq
        (1-\zeta)^2 \theta_g L^{M_{g}+(j+1)r},
    \end{align*}
    where we again used the assumption $3 N L^{-r} < \zeta$ in the last inequality.
\medskip

\noindent
    \textbf{Case 2(b):} $\left( jr \in \{0,r,\ldots,\bar{r}_{g+1} -  r\} , \varpi \text{ mediocre} \right)$. 
    If $\varpi \in \Lambda_{M_g + (j+1)r}$ is mediocre, then all $\varphi \in \chi_{r}(\varpi)$ are good. Thus
    \begin{align*}
        & \#\operatorname{Def}_{M_g + jr}(\varpi) = \sum_{\varphi \in \chi_r(\varpi)} \# \operatorname{Def}_{M_g + jr}(\varphi) = 0 , \text{ and }
    \\
        &\# \operatorname{Def}_{M_g + kr}(\varpi) = \sum_{\varphi \in \chi_r(\varpi)} \# \operatorname{Def}_{M_g + kr}(\varphi) \leq N \ \text{ for all } k \in \{0,\ldots,j\} .
    \end{align*}
    By the induction assumption, for each $\varphi \in \chi_{r}(\varpi)$ there exists a non-bad set $U_{\varphi} \subseteq \Lambda_{M_g,\infty}$ such that
    \begin{equation*}
        \bigcup_{\sigma \in U_\varphi } \sigma \subseteq \varphi, S(\varphi) \geq \sum_{\sigma \in U_\varphi} S(\sigma) , \text{ and } |U_\varphi|  \geq L^{jr} - \sum_{k=0}^{j-1} \# \operatorname{Def}_{M_g + kr}(\varphi) L^{kr} .
    \end{equation*}
    Now define the set $U \coloneqq \bigcup_{\varphi \in \chi_r(\varpi)}$. The same proof as in Case 2(a) above shows that $|U| \geq L^{(j+1)r} - \sum_{k=0}^{j} \# \operatorname{Def}_{M_g + kr}(\varpi) L^{kr} \geq  L^{(j+1)r} - \sum_{k=0}^{j} N L^{kr}$.
    By the definition of $S(\varpi)$ one also readily checks that
    \begin{align*}
        & S(\varpi) = \sum_{\varphi \in \chi_r(\varpi)} S(\varphi) \geq \sum_{\varphi \in \chi_r(\varpi)} \sum_{\sigma \in U_\varphi} S(\sigma)
        =
        \sum_{\sigma \in U} S(\sigma)
        \\
        & \hspace{30mm}
        \geq \left( L^{(j+1) r} - \sum_{k=0}^{j} N L^{kr} \right) (1-\zeta) \theta_g L^{M_g}
        \\
        & \hspace{30mm}
        \geq \left( L^{(j+1) r} - 2N L^{jr} \right) (1-\zeta) \theta_g L^{M_g}
        \geq
        (1-\zeta)^2 \theta_g L^{M_{g}+(j+1)r}.
    \end{align*}

\medskip
\noindent
    \textbf{Part (B):} 
        By the results of Part (A) we know that if $\varpi \in \Lambda_{M_{g+1},\infty}$ is good or mediocre, then there exists a non-bad set $U \subset \Lambda_{M_{g},\infty}$ with
        \begin{equation*}
            |U| \geq L^{\bar{r}_{g+1}} - \sum_{kr \in \{0,r,\ldots,\bar{r}_{g+1}-r \} } \# \operatorname{Def}_{M_g + kr}(\varpi) L^{kr} \hspace{10mm} \text{ such that } \hspace{10mm}
            S(\varpi) \geq \sum_{\sigma \in U} S(\sigma)  .
        \end{equation*}
        What is left to show is that if $\varpi$ has a defect at level $n \in \{M_g, M_{g}+r,\ldots, M_{g+1}-r \} \cup \{-\infty\}$, then  $S(\varpi) \geq \theta_{g+1} L^{M_{g+1}} - 2N L^n \geq (1-\zeta)\theta_{g+1}L^{M_{g+1}}$. (Remember that we say that $\operatorname{Def}(\varpi)=-\infty$ if $\varpi$ does not have a defect.) We will only prove this when $\operatorname{Def}(\varpi)\neq -\infty$.  When $\operatorname{Def}(\varpi)=-\infty$, i.e., when $\varpi$ has no defect and thus $\#\operatorname{Def}_{M_g+kr}(\varpi) = 0$ for all $kr \in \{ 0 , r , \ldots , \bar{r}_{g+1} - r\}$, the exact same proof also works and shows that $S(\varpi) \geq \theta_{g+1} L^{M_{g+1}}$. \\
        
        First, we decompose the elements $\sigma \in U$ depending on at which level in $\{M_{g-1},M_{g-1}+r,\ldots,M_{g}-r\} \cup \{-\infty\}$ they have a defect. With this, we can lower bound $S(\varpi)$ by
        \begin{align}\label{eq:inequalities 1}
            \notag & S(\varpi) \geq \sum_{\sigma \in U} S(\sigma) 
            =
            \sum_{ir \in \{ M_{g-1} , M_{g-1} + r ,\ldots, M_{g}- r\}} \ \sum_{ \substack{ \sigma \in U : \\ \operatorname{Def}(\sigma)=ir } } S(\sigma) + \sum_{ \substack{ \sigma \in U : \\ \operatorname{Def}(\sigma) = - \infty }} S(\sigma)
            \\
            \notag &
            \hspace{25mm}
            \geq
            \sum_{ir \in \{ M_{g-1} , M_{g-1} + r ,\ldots, M_{g}- r\}} \sum_{ \substack{ \sigma \in U : \\ \operatorname{Def}(\sigma)=ir } } \left( \theta_g L^{M_g} - 2N L^{ir} \right) + \sum_{ \substack{ \sigma \in U : \\ \operatorname{Def}(\sigma) = - \infty }} \theta_g L^{M_g}
            \\
            & \hspace{25mm}
            = |U| \theta_g L^{M_g} - 2N
            \sum_{ir \in \{ M_{g-1} , M_{g-1} + r ,\ldots, M_{g}- r\}} \sum_{ \substack{ \sigma \in U : \\ \operatorname{Def}(\sigma)=ir } } L^{ir} .
        \end{align}
        If $\varpi$ is not bad, then, for all $ir \in \{ M_{g-1} , M_{g-1} + r ,\ldots, M_{g}- r\}$ one has
        \begin{equation*}
            \left| \left\{ \sigma \in U : \operatorname{Def}(\sigma) = ir \right\} \right| \leq \log(g)^3 \lceil L^{\bar{r}_{g+1} +M_g - ir} g^{-1-\eps} \rceil ,
        \end{equation*}
        by step (2.) of the algorithm above. Using this upper bound on $\left| \left\{ \sigma \in U : \operatorname{Def}(\sigma) = ir \right\} \right|$ and the trivial inequality $\lceil x \rceil \leq 1+x$, we get that
        \begin{align}\label{eq:defects sum}
            &\notag \sum_{ir \in \{ M_{g-1} , M_{g-1} + r ,\ldots, M_{g}- r\}} \sum_{ \substack{ \sigma \in U : \\ \operatorname{Def}(\sigma)=ir } } L^{ir}
            \\
            & \notag
            \hspace{8mm}
            \leq
            \sum_{ir \in \{ M_{g-1} , M_{g-1} + r ,\ldots, M_{g}- r\}} \hspace{-12mm} \log(g)^3 \lceil L^{\bar{r}_{g+1} +M_g - ir} g^{-1-\eps} \rceil  L^{ ir } 
            \\
            & \notag
            \hspace{8mm}
            \leq
            \sum_{ir \in \{ M_{g-1} , M_{g-1} + r ,\ldots, M_{g}- r\}} \hspace{-12mm} \log(g)^3   L^{ ir } 
            +
            \sum_{ir \in \{ M_{g-1} , M_{g-1} + r ,\ldots, M_{g}- r\}} \hspace{-1.5cm} \log(g)^3 L^{\bar{r}_{g+1} +M_g - ir} g^{-1-\eps}  L^{ ir } 
            \\
            & \notag
            \hspace{8mm}
            \leq \log(g)^3  L^{M_{g}}
            +
            \log(g)^3 \left( M_{g}-M_{g-1} \right) L^{\bar{r}_{g+1} + M_{g}} g^{-1-\eps} 
            \\
            &
            \hspace{8mm}
            \leq \log(g)^3  L^{M_{g}}
            +
            \log(g)^5 L^{ M_{g+1} } g^{-1-\eps} 
            =
            L^{M_{g+1}} \left( \log(g)^3 L^{-\bar{r}_{g+1} } + \log(g)^5 g^{-1-\eps} \right),
        \end{align}
        where we used the assumptions $M_{g}-M_{g-1}=\bar{r}_g \leq \log(g)^2$ in the last inequality. If $\varpi$ is good or mediocre, but has a defect at level $n \in \{M_g, M_{g}+r,\ldots, M_{g+1}-r \}$, then $\# \operatorname{Def}_{M_g + kr} (\varpi) \leq N$ when $M_g + kr \in \{M_{g}, M_{g}+r,\ldots, n\}$ and $\# \operatorname{Def}_{M_g + kr} (\varpi) = 0$ otherwise, so that
        \begin{multline}\label{eq:U lower bound}
            |U|  \geq L^{\bar{r}_{g+1}} - \hspace{-3mm} \sum_{kr \in \{0,r,\ldots,\bar{r}_{g+1}-r \} } \hspace{-3mm} \# \operatorname{Def}_{M_g + kr}(\varpi) L^{kr} 
            \\
            \geq
            L^{\bar{r}_{g+1}} - \hspace{-3mm} \sum_{ \substack{ kr \in \{0,r,\ldots,\bar{r}_{g+1}-r \} : \\ M_{g}+kr \leq n} } \hspace{-3mm} N L^{kr} 
            \geq
            L^{\bar{r}_{g+1}} - 2 N L^{n-M_{g}}
        \end{multline}
        Combining inequalities \eqref{eq:inequalities 1}, \eqref{eq:defects sum}, and \eqref{eq:U lower bound}, we get that
        \begin{align*}
            & S(\varpi) \overset{\eqref{eq:inequalities 1}}{\geq}
            |U| \theta_g L^{M_g} - 2N
            \sum_{ir \in \{ M_{g-1} , M_{g-1} + r ,\ldots, M_{g}- r\}} \sum_{ \substack{ \sigma \in U : \\ \operatorname{Def}(\sigma)=ir } } L^{ir}
            \\
            & \hspace{22mm}
            \overset{\eqref{eq:defects sum}}{\geq} |U| \theta_g L^{M_g} - 2N L^{M_{g+1}} \left( \log(g)^3 L^{-\bar{r}_{g+1} } + \log(g)^5 g^{-1-\eps} \right)
            \\
            & \hspace{22mm}
            \overset{\eqref{eq:U lower bound}}{\geq} \left( L^{\bar{r}_{g+1}} - 2N L^{n - M_g} \right) \theta_g L^{M_g} - 2N L^{M_{g+1}} \left( \log(g)^3 L^{-\bar{r}_{g+1} } + \log(g)^5 g^{-1-\eps} \right)
            \\
            & \hspace{22mm}
            \geq \theta_g L^{M_{g+1}} - 2N L^{n}  - 2N L^{M_{g+1}} \left( \log(g)^3 L^{-\bar{r}_{g+1} } + \log(g)^5 g^{-1-\eps} \right)
            \\
            & \hspace{22mm}
            = \left( \theta_g - 2N \left( \log(g)^3 L^{-\bar{r}_{g+1} } + \log(g)^5 g^{-1-\eps} \right) \right) L^{M_{g+1}} - 2N L^{n}
            \\
            & \hspace{22mm}
            =
            \theta_{g+1} L^{M_{g+1}} - 2N L^{n} ,
        \end{align*}
        which shows the first inequality in \eqref{part B ineq1} and \eqref{part B ineq2}, respectively. For the second inequality, we use that $\theta_{g+1} \geq \theta \geq 0.99$ and $3NL^{-r} < \zeta$ by assumption, and directly get that
        \begin{align*}
            & S(\varpi) \geq  \theta_{g+1} L^{M_{g+1}} - 2N L^{n} \geq \theta_{g+1} L^{M_{g+1}} - 2N L^{M_{g+1}-r}
            \\
            &
            \hspace{3cm}
            = \theta_{g+1} L^{M_{g+1}} \left( 1 - \frac{2N}{\theta_{g+1}} L^{-r} \right)
            \geq
            \theta_{g+1} L^{M_{g+1}} \left( 1 - 3N L^{-r} \right)
            \\
            &
            \hspace{3cm}
            \geq
            (1-\zeta)\theta_{g+1} L^{M_{g+1}},
        \end{align*}
        completing the proof.
\end{proof}

Next, we introduce the following proposition, which gives quantitative decay estimates on the probability that blocks $\varpi \in \Lambda_{M_{g},\infty}$ are bad or mediocre.

\begin{proposition}\label{prop:bad mediocre prop}
    Assume that
	\begin{align*}
		& \p_{q,J} \left(\Lambda_{M_{g}} \textnormal{ is bad}\right) \leq g^{-1-\eps},\\
        & \p_{q,J} \left(\Lambda_{M_{g}} \textnormal{ is $\gamma$-mediocre}\right) \leq g^{-(1-\gamma)-2\eps} , \ \textnormal{ for all $\gamma \in \{L^{-r},\ldots,1-L^{-r}\}$, and} \\
		& \p_{q,J} \left(\operatorname{Def}(\Lambda_{M_{g}}) = m\right) \leq L^{M_{g}-m} (g-1)^{-1-\eps} \text{ for all } m \in \left\{M_{g-1}, \ldots, M_{g}-r \right\}.
	\end{align*}
	Then, for all large enough $g \in \N$, all $\gamma \in \{L^{-r},\ldots,1-L^{-r}\}$, and all $k \in \{r,2r,\ldots,\bar{r}_{g+1} \}$ one has
	\begin{align}\label{eq:bad}
		& \p_{q,J} \left(\Lambda_{M_{g}+k} \textnormal{ is bad}\right) \leq (g+1)^{-1-\eps},\\
		\label{eq:mediocre} & \p_{q,J} \left(\Lambda_{M_{g}+k} \textnormal{ is $\gamma$-mediocre}\right) \leq (g+1)^{-(1-\gamma)-2\eps},
	\end{align}
    and, for the block $\Lambda_{M_{g+1}}(0)$ one has
    \begin{equation}\label{eq:defect}
         \p_{q,J} \left(\operatorname{Def}(\Lambda_{M_{g+1}}) = m\right) \leq L^{M_{g+1}-m} g^{-1-\eps} \text{ for all } m \in \left\{M_{g}, \ldots, M_{g+1}-r \right\},
    \end{equation}
\end{proposition}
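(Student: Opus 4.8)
# Proof Proposal for Proposition \ref{prop:bad mediocre prop}

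\textbf{Overall strategy.} The plan is to prove the three bounds \eqref{eq:bad}, \eqref{eq:mediocre}, and \eqref{eq:defect} by induction on $k \in \{r, 2r, \ldots, \bar r_{g+1}\}$, using the recursive structure of the renormalization algorithm together with the a~priori estimates \eqref{eq:block1 removed 1/2 v2}--\eqref{eq:block4 to nextblock v2} for the block graphs $G^{\mathrm{bl}}(\chi_r(\varpi))$ and $G^{\mathrm{bl}}(\chi_r(\varpi)\setminus\{\varphi\})$. The base case is $k = r$, which reduces essentially to the hypotheses at level $M_g$ applied to the $L^r$ children, and the inductive step goes from level $M_g + (k-1)r$ to level $M_g + kr$. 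Throughout, the key point is that an event like ``$\varpi$ is bad'' can, by the case analysis (1.1A)--(1.1D) and the overriding steps (1.0), (2.), decompose into a bounded number of scenarios: either two or more children are non-good (giving a quadratic gain in the child-level probabilities, i.e.\ a square of something like $g^{-1-\eps}$, which is much smaller than $(g+1)^{-1-\eps}$), or exactly one child is non-good and the relevant block graph on the remaining children fails to be connected (controlled by \eqref{eq:block1 removed 1/2 v2}), or a mediocre child fails to reconnect all of its components (controlled by \eqref{eq:block4 to nextblock v2}), or else the accounting steps (1.0)/(2.) are triggered.

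\textbf{Bounding ``bad'' probabilities.} For \eqref{eq:bad} at level $M_g + kr$, I would write $\{\varpi \text{ bad}\}$ as a union over the mechanisms that can declare $\varpi$ bad. The ``two or more non-good children'' contribution is at most $\binom{L^r}{2}$ times the square of $\p(\text{child non-good}) \le \p(\text{child bad}) + \sum_\gamma \p(\text{child } \gamma\text{-mediocre}) \lesssim g^{-(1-\max\gamma)-\eps}$, and since $\max\gamma \le 1 - L^{-r}$ this square decays like $g^{-2L^{-r}-\ldots}$; by the parameter choices ($N\eps > 10$, $g$ large, $\bar r_{g+1}\le \log(g)^2$) this is $o((g+1)^{-1-\eps})$. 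The ``exactly one non-good child, and $G^{\mathrm{bl}}(\chi_r(\varpi)\setminus\{\varphi\})$ disconnected'' contribution uses independence of the inter-block edges from the in-block configurations and \eqref{eq:block1 removed 1/2 v2}, giving $L^r \cdot \p(\text{child non-good}) \cdot (n-r)^{-1/2-\eps}$ where $n = M_g + kr$; here $n \ge M_g \ge \log(g)^{10}g \cdot(\text{stuff})$, so $(n-r)^{-1/2-\eps}$ is far smaller than any polynomial in $1/g$, easily absorbing the $L^r$ factor (note $L^r = L^{\bar r_{g+1}} \le L^r \log(g)^{10}g$, which is polynomial in $g$). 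The mediocre-child-fails-to-reconnect case is handled similarly via \eqref{eq:block4 to nextblock v2} applied to each connected component of $G^{\mathrm{mx}}(\chi_r(\varphi))$, noting components have size $\ge L^{-r}\cdot(\text{block count})$ so the exponent in \eqref{eq:block4 to nextblock v2} is a positive power. Finally, the steps (1.0) and (2.) contributions are bounded by a union bound over levels $m$ of the probability that the number of defecting descendants exceeds the allowed threshold; here one uses the inductive/hypothesized defect bounds $\p(\operatorname{Def}(\varphi) = m) \le L^{M_g - m}(g-1)^{-1-\eps}$, linearity of expectation to compute the expected number of defecting children at each level, and a Markov/union bound (the threshold $\log(g)^3 \lceil L^{\ldots}g^{-1-\eps}\rceil$ being $\log(g)^3$ times the expectation, so Markov gives a $\log(g)^{-3}$ saving, and there are only $O(\log(g)^2)$ levels $m$).

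\textbf{Bounding ``mediocre'' probabilities and the defect estimate.} For \eqref{eq:mediocre}, a $\gamma$-mediocre $\varpi$ at level $M_g + kr$ arises (barring the bad-overriding mechanisms, already bounded above) in case (1.1A) when $G^{\mathrm{mx}}(\chi_r(\varpi))$ has smallest component of density exactly $\gamma$; since all children are good and have maximum-cluster density $\ge (1-\zeta)\theta_g$ inside them (Lemma \ref{lem:density evolu}, Part~(B)), the graph $G^{\mathrm{mx}}(\chi_r(\varpi))$ dominates a well-connected model and one invokes \eqref{eq:block3 small cluster v2} (with $n = M_g + kr$) to get a bound of order $(n-r)^{-(1-\gamma)-2\eps}$, which is comfortably below $(g+1)^{-(1-\gamma)-2\eps}$ because $n \gg g$. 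One must also add in the contribution where $\varpi$ became mediocre at an earlier level $M_g + k'r$ and stayed so, but the recursive structure means ``$\varpi$ is $\gamma$-mediocre at level $M_g+kr$'' already includes that case via the inductive hypothesis applied to the unique mediocre child in (1.1D)-type propagation. For the defect estimate \eqref{eq:defect}: $\operatorname{Def}(\Lambda_{M_{g+1}}) = m$ forces some block $\varpi \subseteq \Lambda_{M_{g+1}}$ at level $m$ to be bad and all blocks at levels strictly between $m$ and $M_{g+1}$ (containing that bad block) to be good or mediocre; there are $L^{M_{g+1}-m}$ candidate positions for the level-$m$ block, each bad with probability $\le$ the already-established bound $g^{-1-\eps}$ (at level $m \ge M_g$), giving $L^{M_{g+1}-m}g^{-1-\eps}$ by a union bound.

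\textbf{Main obstacle.} The hardest part will be the bookkeeping in the (1.0)/(2.) accounting steps: one needs to verify that the thresholds $\log(g)^3 \lceil L^{\bar r_{g+1}+M_g - j}g^{-1-\eps}\rceil$ were chosen so that the expected number of defecting descendants at level $j$ is smaller than the threshold by a factor $\ge \log(g)^3$, uniformly over $j$, so that Markov's inequality combined with the $O(\log(g)^2)$ union bound over levels still leaves a net saving that beats $(g+1)^{-1-\eps}$ against $g^{-1-\eps}$ — i.e.\ one must track the precise interplay between the $\log$-powers and the gap $(g+1)^{-1-\eps}$ vs.\ $g^{-1-\eps}$, which is $\approx g^{-1-\eps}(1 + (1+\eps)/g)^{-1}$, so the required saving is roughly a factor $1 - O(1/g)$, easily achieved by the $\log(g)^{-3}$ and polynomial-in-$n$ savings above. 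A secondary care point is ensuring the various events are conditionally independent given the sigma-algebras generated by the in-block configurations, so that the inter-block-edge estimates \eqref{eq:block1 removed 1/2 v2}--\eqref{eq:block4 to nextblock v2} (which are statements about product measures on those edges) can be applied after conditioning, exactly as in the proof of Theorem~\ref{theo:existence large a}.
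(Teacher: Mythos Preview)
Your overall architecture---induction on $k$, decomposition via cases (1.1A)--(1.1D), (1.0), (2.), stochastic domination of $G^{\mathrm{mx}}$ by $G^{\mathrm{bl}}$, and a union bound for the defect estimate---matches the paper's proof. However, the treatment of the accounting steps (1.0) and (2.) has a genuine quantitative gap that cannot be fixed with Markov's inequality alone.

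For step (2.), your argument gives: the expected number of $M_g$-blocks with defect at level $m$ is about $L^{\bar r_{g+1}+M_g-m}(g-1)^{-1-\eps}$, the threshold is $\log(g)^3$ times this, so Markov yields a bound of order $\log(g)^{-3}$ per level $m$. Summed over $O(\log(g)^2)$ levels, the step-(2.) contribution to $\p_{q,J}(\varpi\text{ bad})$ is $O(\log(g)^{-1})$. This is an \emph{additive} contribution, and $\log(g)^{-1}$ is far larger than the target $(g+1)^{-1-\eps}$. Your remark in the obstacle section that the ``required saving is roughly a factor $1-O(1/g)$'' misreads the situation: you are not trying to improve $g^{-1-\eps}$ to $(g+1)^{-1-\eps}$ multiplicatively, you need the step-(2.) term itself to be $\le (g+1)^{-1-\eps}$. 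The paper instead applies exponential Markov (i.e.\ $\p(X>T)\le e^{-T}\,\E[e^X]$ with $\E[e^X]\le \exp(e\,\E X)$) to obtain a bound of order $\exp(-c\log(g)^3)$ per level, which is super-polynomially small.

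For step (1.0), the same issue arises. The threshold there is the fixed constant $N$, and the expected count at level $M_g+m$ is $L^{jr-m}\p_{q,J}(\Lambda_{M_g+m}\text{ bad})$, which for $m=0$ is about $L^{\bar r_{g+1}}g^{-1-\eps}\asymp \log(g)^{10}g^{-\eps}$; Markov gives a bound of this order divided by $N$, again much larger than $(g+1)^{-1-\eps}$. The paper instead uses independence of disjoint sub-blocks to bound $\p(\#\mathrm{Def}>N)$ by $\bigl(L^{jr-m}\p(\text{bad})\bigr)^N\lesssim (\log(g)^{10}g^{-\eps})^N$, and the parameter choice $N\eps>10$ makes this $\le g^{-10}$ up to log factors.

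Two smaller points: (i) since a $\gamma$-mediocre block has $\gamma$ equal to the density of the \emph{smallest} component, one has $\gamma\le 1/2$, so $\p(\text{child not good})\lesssim L^r g^{-1/2-\eps}$, not merely $g^{-L^{-r}-\eps}$ as you write; your weaker bound would make the ``two non-good children'' square only $g^{-2L^{-r}-2\eps}$, which does \emph{not} beat $(g+1)^{-1-\eps}$ since $2L^{-r}<1$. (ii) You write $n\ge M_g\ge \log(g)^{10}g\cdot(\ldots)$ and infer that $(n-r)^{-1/2-\eps}$ is super-polynomially small; in fact $M_g=r+\sum_{m=2}^g\bar r_m$ with $\bar r_m\asymp\log m$, so $M_g\asymp g\log g/\log L$, and $(n-r)^{-1/2-\eps}$ is only $\lesssim g^{-1/2-\eps}$. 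The estimate for the (1.1C)-term still closes because it is multiplied by $\p(\text{child not good})\lesssim g^{-1/2-\eps}$, giving the needed $g^{-1-2\eps}$.
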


Before we go to the proof of Proposition \ref{prop:bad mediocre prop}, we introduce the following lemma, which consists of four inequalities. The proof of the proposition then follows from a straightforward application of these inequalities.

\begin{lemma}\label{lem:longinequali}
    Let $\varpi \in \Lambda_{M_g+jr,\infty}$ for some $jr \in \{r,\ldots, \bar{r}_{g+1} - r\}$. Then
    \begin{align}\label{eq:prop57 ineq1}
        &\notag \p_{q,J} \left( \varpi \textnormal{ bad} \right)  
        \\
        & \notag 
        \leq
        \sum_{\varphi, \varphi^\prime \in \chi_{r}(\varpi)} \p_{q,J} \left( \varphi \textnormal{ not good} \right) \p_{q,J} \left( \varphi^\prime \textnormal{ not good} \right) 
        \\
        & \notag \hspace{7mm}
        + \sum_{\varphi \in \chi_{r}(\varpi)} \p_{q,J} \left( \varphi \textnormal{ not good} \right) \p_{\bar{J}} \left( G^{\mathrm{bl}} \left( \chi_{r}(\varpi) \setminus \{\varphi \} \right) \textnormal{ not connected} \right)
        \\
        & \notag \hspace{7mm}
        + \sum_{\gamma \in \{L^{-r}, \ldots, 1- L^{-r} \} }  \sum_{\varphi \in \chi_{r}(\varpi)} \p_{q,J} \left( \varphi \textnormal{ is $\gamma$-mediocre} \right) \sum_{A \subset \chi_{r}(\varphi) : |A|\geq \gamma L^r } \p_{\bar{J}} \left( A \nsim \chi_{r}(\varpi) \setminus \{ \varphi \} \right) 
        \\
        & \hspace{7mm}
        +
        \sum_{m \in \{0,r,\ldots, jr - r \} } \left( L^{jr - m} \p_{q,J} \left( \Lambda_{M_{g}+m}(0) \textnormal{ bad} \right) \right)^N
    \end{align}
    and, for every $\gamma \in \{L^{-r},\ldots,1-L^{-r} \}$,
    \begin{align}\label{eq:prop57 ineq2}
        \p_{q,J} \left( \varpi \textnormal{ is $\gamma$-mediocre} \right)
        \leq
        \p_{\bar{J}} \left( G^{\mathrm{bl}} \left( \chi_{r}(\varpi) \right) \textnormal{ has a cluster of density at most $\gamma$} \right).
    \end{align}
    If $\varpi \in \Lambda_{M_{g+1},\infty}$, then, for every $\gamma \in \{ L^{-r},\ldots, 1-L^{-r} \}$,
    \begin{equation}\label{eq:prop57 ineq3}
        \p_{q,J} \left( \varpi \textnormal{ is $\gamma$-mediocre} \right)
        \leq
        \p_{\bar{J}} \left( G^{\mathrm{bl}} \left( \chi_{r}(\varpi) \right) \textnormal{ has a cluster of density at most $\gamma$} \right),
    \end{equation}
    and
    \begin{align}
        \label{eq:prop57 ineq5}
        &\p_{q,J} \left( \varpi \textnormal{ bad} \right)  
        \\
        & \notag \leq
        \sum_{\varphi, \varphi^\prime \in \chi_{r}(\varpi)} \p_{q,J} \left( \varphi \textnormal{ not good} \right) \p_{q,J} \left( \varphi^\prime \textnormal{ not good} \right) 
        \\
        & \notag \hspace{4mm}
        + \sum_{\varphi \in \chi_{r}(\varpi)} \p_{q,J} \left( \varphi \textnormal{ not good} \right) \p_{\bar{J}} \left( G^{\mathrm{bl}} \left( \chi_{r}(\varpi) \setminus \{\varphi \} \right) \textnormal{ not connected} \right)
        \\
        & \notag \hspace{4mm}
        + \sum_{\gamma \in \{L^{-r}, \ldots, 1- L^{-r} \} }  \sum_{\varphi \in \chi_{r}(\varpi)} \p_{q,J} \left( \varphi \textnormal{ is $\gamma$-mediocre} \right) \sum_{A \subset \chi_{r}(\varphi) : |A|\geq \gamma L^r } \p_{\bar{J}} \left( A \nsim \chi_{r}(\varpi) \setminus \{ \varphi \} \right) 
        \\
        & \notag \hspace{4mm}
        +
        \sum_{m \in \{M_{g}, M_{g}+r,\ldots, M_{g+1} - r \} }  \hspace{-12mm} \left( L^{M_{g+1} - m} \p_{q,J} \left( \Lambda_{m}(0) \textnormal{ bad} \right) \right)^N
        \\
        & \notag \hspace{4mm}
        +
        \sum_{m \in \{M_{g-1}, M_{g-1}+r,\ldots, M_{g} - r \} } \hspace{-12mm} \exp \left( e L^{\bar{r}_{g+1}} L^{M_{g}-m} \p_{q,J} \left( \Lambda_{m}(0) \textnormal{ bad} \right) - \log(g)^3 \lceil L^{\bar{r}_{g+1} + M_{g} - m} g^{-1-\eps} \rceil \right) .
    \end{align}

\end{lemma}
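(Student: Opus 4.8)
The plan is to prove \Cref{lem:longinequali} by reading off each of the four inequalities directly from the definitions of the events ``$\varpi$ bad'', ``$\varpi$ $\gamma$-mediocre'', together with a union bound and the independence (or negative-association-type) structure of the percolation configuration restricted to disjoint blocks. The essential point is that the status of each $r$-sub-block $\varphi\in\chi_r(\varpi)$ depends only on the configuration inside $\varphi$, and hence these statuses are mutually independent; the graphs $G^{\mathrm{bl}}(\chi_r(\varpi))$ and $G^{\mathrm{bl}}(\chi_r(\varpi)\setminus\{\varphi\})$ depend only on the edges \emph{between} the $r$-sub-blocks, which are independent of the internal configurations; and the quantities appearing on the right-hand side, being connection probabilities between disjoint sets, equal $\exp(-J(\cdot,\cdot))$ by \eqref{eq:Xi_large_deviation}. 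I would state these three facts once at the beginning of the proof and then invoke them repeatedly.

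For \eqref{eq:prop57 ineq1}: $\varpi\in\Lambda_{M_g+jr,\infty}$ with $jr\le\bar r_{g+1}-r$ is declared bad either in step (1.0) --- i.e.\ $\#\operatorname{Def}_{M_g+m}(\varpi)>N$ for some $m\in\{0,r,\dots,(j-1)r\}$ --- or in one of the cases (1.1B), (1.1C), (1.1D) of step (1.1), or by the ``both good and bad $\Rightarrow$ bad'' override. I would bound $\p(\varpi\text{ bad})$ by the sum of the probabilities of these mutually non-exclusive causes. Case (1.1B): at least two $\varphi\in\chi_r(\varpi)$ are not good; by a union bound over ordered pairs and independence of the internal statuses this is at most $\sum_{\varphi,\varphi'}\p(\varphi\text{ not good})\p(\varphi'\text{ not good})$, which is the first term. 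Case (1.1C): exactly one $\varphi$ is bad, the rest good, and $G^{\mathrm{mx}}(\chi_r(\varpi)\setminus\{\varphi\})$ not connected; since $G^{\mathrm{mx}}$ disconnected implies $G^{\mathrm{bl}}$ disconnected (an $\mathrm{mx}$-edge is in particular a $\mathrm{bl}$-edge), and the $G^{\mathrm{bl}}$ graph on the sub-blocks is independent of the ``$\varphi$ not good'' event, a union bound over the defective $\varphi$ gives the second term. Case (1.1D): exactly one $\varphi$ is $\gamma$-mediocre for some $\gamma\in\{L^{-r},\dots,1-L^{-r}\}$, all others good, and the ``good'' conclusion fails; the failure forces either $G^{\mathrm{mx}}(\chi_r(\varpi)\setminus\{\varphi\})$ disconnected (again dominated by the (1.1C) term, which one can fold into the second term by enlarging the constant, or treat as part of the third term) or some connected component $A$ of $G^{\mathrm{mx}}(\chi_r(\varphi))$ with $|A|\ge\gamma L^r$ failing to connect to $\chi_r(\varpi)\setminus\{\varphi\}$ --- this gives the third term after a union bound over $\gamma$, over the mediocre $\varphi$, and over the candidate component-sets $A$ (using that $G^{\mathrm{mx}}(\chi_r(\varphi))$, hence its components, is measurable with respect to the configuration inside $\varphi$, which is independent of the edges counted in $\{A\nsim\chi_r(\varpi)\setminus\{\varphi\}\}$). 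Finally, step (1.0): if $\#\operatorname{Def}_{M_g+m}(\varpi)>N$ then there exist $N+1$ --- in particular $N$ --- disjoint bad $(M_g+m)$-sub-blocks of $\varpi$; by independence and a union bound over which $N$ of the $L^{jr-m}$ candidates they are, this contributes at most $\binom{L^{jr-m}}{N}\p(\Lambda_{M_g+m}\text{ bad})^N\le(L^{jr-m}\p(\Lambda_{M_g+m}\text{ bad}))^N$, summed over $m$, which is the fourth term.

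Inequalities \eqref{eq:prop57 ineq2} and \eqref{eq:prop57 ineq3} are the simplest: by definition (case (1.1A)) a block $\varpi$ can only be $\gamma$-mediocre if all its $r$-sub-blocks are good \emph{and} $G^{\mathrm{mx}}(\chi_r(\varpi))$ has smallest component of size exactly $\gamma L^r$; dropping the ``all sub-blocks good'' requirement and replacing $G^{\mathrm{mx}}$ by $G^{\mathrm{bl}}$ (a $\mathrm{bl}$-cluster contains the corresponding $\mathrm{mx}$-cluster, so if $G^{\mathrm{mx}}$ has a component of size $\gamma L^r$ then $G^{\mathrm{bl}}$ has a cluster of density at most $\gamma$) only enlarges the event, giving the claimed bound; and this argument is insensitive to whether $\varpi$ is at a generic level $M_g+jr$ or at level $M_{g+1}$, which is why \eqref{eq:prop57 ineq2} and \eqref{eq:prop57 ineq3} are literally the same inequality stated for the two cases. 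For \eqref{eq:prop57 ineq5}, when $\varpi\in\Lambda_{M_{g+1},\infty}$ the block can additionally be declared bad by step (2.) --- too many sub-blocks $\Lambda_{M_g}(u)\subseteq\varpi$ carrying a defect at some common level $m\in\{M_{g-1},\dots,M_g-r\}$. Here I would observe that the events $\{\operatorname{Def}(\Lambda_{M_g}(u))=m\}$ are independent over the $L^{\bar r_{g+1}}$ choices of $u$ and each has probability at most $p_m:=L^{M_g-m}\p(\Lambda_{M_g}\text{ bad})$ (at level $M_g$ this is the hypothesised bound, which one can also derive by the same ``$N$ disjoint bad sub-blocks'' reasoning); the number of such defective sub-blocks is stochastically dominated by a $\mathrm{Bin}(L^{\bar r_{g+1}},p_m)$ random variable, and $\p(\mathrm{Bin}(n,p)\ge t)\le (enp/t)^t$, so with $t=\log(g)^3\lceil L^{\bar r_{g+1}+M_g-m}g^{-1-\eps}\rceil$ and $np = L^{\bar r_{g+1}}p_m$ one gets exactly the exponential term $\exp(e L^{\bar r_{g+1}}L^{M_g-m}\p(\Lambda_m(0)\text{ bad}) - \log(g)^3\lceil L^{\bar r_{g+1}+M_g-m}g^{-1-\eps}\rceil)$ after a union bound over $m$; the remaining terms are identical to those of \eqref{eq:prop57 ineq1} (with the level-$m$ good-sub-block sum now running over $m\in\{M_g,\dots,M_{g+1}-r\}$ because $\bar r_{g+1}=M_{g+1}-M_g$). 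I expect the only genuinely delicate point to be the bookkeeping in case (1.1D) --- making sure that ``$\varpi$ not good'' there is correctly decomposed into a disconnection event for $G^{\mathrm{mx}}(\chi_r(\varpi)\setminus\{\varphi\})$ (absorbed into the second/third terms) and a failure-to-attach event for some large component $A$ of the mediocre block, and that the latter is indeed independent of the randomness defining $A$ so that \eqref{eq:block4 to nextblock v2}-type estimates can later be applied term by term. Everything else is a routine union bound plus the observation \eqref{eq:Xi_large_deviation}.
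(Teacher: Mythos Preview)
Your argument has a genuine gap at the heart of the comparison with $\p_{\bar J}$. You write that ``$G^{\mathrm{mx}}$ disconnected implies $G^{\mathrm{bl}}$ disconnected (an $\mathrm{mx}$-edge is in particular a $\mathrm{bl}$-edge)''. The parenthetical is true and gives the pointwise inclusion $E^{\mathrm{mx}}\subseteq E^{\mathrm{bl}}$, but that inclusion yields the \emph{reverse} implication: $G^{\mathrm{bl}}$ disconnected $\Rightarrow$ $G^{\mathrm{mx}}$ disconnected, not the one you claim. The same mistake recurs in your treatment of \eqref{eq:prop57 ineq2}--\eqref{eq:prop57 ineq3}: a small $G^{\mathrm{mx}}$-cluster may be absorbed into a large $G^{\mathrm{bl}}$-cluster, so ``$G^{\mathrm{mx}}$ has a component of density $\le\gamma$'' does not force the same for $G^{\mathrm{bl}}$. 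More fundamentally, even if the pointwise implication went your way it would only compare $G^{\mathrm{mx}}$ and $G^{\mathrm{bl}}$ under the \emph{same} measure $\p_{q,J}$, whereas every right-hand side in the lemma involves $\p_{\bar J}$; you never explain that change of kernel.

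The paper closes this gap by a stochastic domination going in the opposite direction: conditioned on the relevant sub-blocks being good, \Cref{lem:density evolu} guarantees $|K_{\max}(\varphi)|\ge(1-\zeta)^2\theta_g\,|\varphi|$, so for good $\varphi',\varphi^\star$ one has
\[
\p_{q,J}\!\bigl(K_{\max}(\varphi')\sim K_{\max}(\varphi^\star)\,\big|\,\varphi',\varphi^\star\text{ good}\bigr)
\;\ge\;1-\exp\!\bigl(-(1-\zeta)^4\theta^2\,|\varphi'|\,|\varphi^\star|\,J\bigr)
\;\ge\;\p_{\bar J}\!\bigl(\varphi'\sim\varphi^\star\bigr),
\]
the last step using the parameter choice $(1-\zeta)^4\theta^2 a>a^\star$. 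Thus $G^{\mathrm{mx}}$ under $\p_{q,J}$ (conditioned on ``all sub-blocks good'') stochastically dominates $G^{\mathrm{bl}}$ under $\p_{\bar J}$, and since ``not connected'' and ``has a cluster of density $\le\gamma$'' are decreasing events, the desired inequalities follow. The same domination is what makes the third term work: the actual failure in (1.1D) is $K_{\max}(\sigma)\nsim K_{\max}(\psi)$ for all $\sigma\in A$, $\psi\in\chi_r(\varpi)\setminus\{\varphi\}$, and only after invoking the density bound can one replace this by $\p_{\bar J}(A\nsim\chi_r(\varpi)\setminus\{\varphi\})$. Your step-(1.0), step-(1.1B), and step-(2.) bounds are correct (the binomial tail $(enp/t)^t$ is in fact $\le\exp(enp-t)$, so it does imply the stated exponential term), but the comparison with $\bar J$ is the essential analytic content of the lemma and your proposal does not supply it.
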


\begin{proof}
    The proof follows by taking a union bound over the different situations in which a block $\varpi \in \Lambda_{M_g + jr,\infty}$ can be bad or $\gamma$-mediocre, respectively. Formally, for a block $\varpi \in \Lambda_{M_{g}+jr,\infty}$ with $M_{g}+jr \in \{M_g+r,\ldots,M_{g}+\bar{r}_{g+1}-r\}$ we have that
    \begin{align}\label{1.0 bad}
        & \p_{q,J} (\varpi \text{ bad}) \leq \sum_{\textnormal{X} \in \{\textnormal{B,C,D}\}} \p_{q,J} \left(\varpi \text{ declared bad in step (1.1X)} \right)
        \\
        & \label{bcd bad}
        \hspace{6cm} +  \p_{q,J} (\varpi \text{ declared bad in step (1.0)})
    \end{align}
    We start with an upper bound on the probability $\p_{q,J} (\varpi \text{ declared bad in step (1.0)})$. First note that for disjoint blocks $\varphi_1,\ldots,\varphi_N$ the events $\left(\{\varphi_i \text{ bad}\} \right)_{i \in [N]}$ are independent, since the event $\{\varphi_i \text{ bad}\}$ only depends on edges with both endpoints in $\varphi_i$.
    Thus, we get that
    \begin{align*}
        &\p_{q,J} (\varpi \text{ declared bad in step (1.0)})  \leq \sum_{m \in \{0,r,\ldots,jr-r\}}  \p_{q,J} \left( \#\operatorname{Def}_{M_g+m}(\varpi) > N \right)
        \\
        & \hspace{40mm}
        \leq \sum_{m \in \{0,r,\ldots,jr-r\}} \p_{q,J} \left( \bigcup_{ \substack{\varphi_1,\ldots,\varphi_N \in \Lambda_{M_g+m,\infty} : \\ \varphi_1,\ldots,\varphi_N \subset \varpi , |\{\varphi_1,\ldots,\varphi_N\}| = N} } \{ \varphi_i \text{ bad for all } i \} \right)
        \\
        & \hspace{40mm}
        \leq \sum_{m \in \{0,r,\ldots,jr-r\}}  \sum_{ \substack{\varphi_1,\ldots,\varphi_N \in \Lambda_{M_g+m,\infty} : \\ \varphi_1,\ldots,\varphi_N \subset \varpi, |\{\varphi_1,\ldots,\varphi_N\}| = N} }  \p_{q,J} \left( \Lambda_{M_g+m}(0) \text{ bad} \right)^N
        \\
        & \hspace{40mm}
        \leq 
        \sum_{m \in \{0,r,\ldots,jr-r\}} \left(L^{jr-m}  \p_{q,J} \left( \Lambda_{M_g+m}(0) \text{ bad} \right) \right)^N .
    \end{align*}
    Next, we upper bound the term in \eqref{bcd bad} when X $=$ B. Using again that the events $\left( \{ \varphi \text{ not good}\} \right)_{\varphi \in \chi_r(\varpi)}$ are independent, we get that
    \begin{align*}
        & \p_{q,J} \left(\varpi \text{ declared bad in step (1.1B)} \right) 
        \leq
        \sum_{\varphi, \varphi^\prime \in \chi_r(\varpi) : \varphi \neq \varphi^\prime} \p_{q,J} \left( \varphi, \varphi^\prime \text{ not good} \right)
        \\
        &
        \leq
        \sum_{\varphi, \varphi^\prime \in \chi_r(\varpi) : \varphi \neq \varphi^\prime} \p_{q,J} \left( \varphi \text{ not good} \right) \p_{q,J} \left(  \varphi^\prime \text{ not good} \right)
    \end{align*}
    Next, we study the term in \eqref{bcd bad} for X $=$ C. If there exists exactly one $\varphi \in \chi_r(\varpi)$ that is bad, then $|K_{\max}(\varphi^\prime)| \geq (1-\zeta)^2 \theta L^{M_g+jr-r}$ for all $\varphi^\prime \in \chi_{r}(\varpi)\setminus \{\varphi\}$. In particular, one has for all good blocks $\varphi^\prime, \varphi^\star \in \chi_{r}(\varpi) \setminus \{\varphi \}$, say with $\varphi^\prime = \Lambda_{M_g+jr-r}(u), \varphi^\star = \Lambda_{M_g+jr-r}(v)$, that
    \begin{align}\label{eq:mx bl domination}
        & \notag \p_{q,J} \left( K_{\max}(\varphi^\prime) \sim K_{\max}(\varphi^\star) \mid \varphi^\prime, \varphi^\star \text{ good} \right) 
        \\
        & \notag
        \hspace{40mm}
        \geq 
        1 - \exp \left(  - \left( (1-\zeta)^2 \theta L^{M_g+jr-r} \right)^2 J(\{u,v\}) \right)
        \\
        &
        \hspace{40mm}
        \geq 
        1 - \exp \left(  - \left( L^{M_g+jr-r} \right)^2 \bar{J}(\{u,v\}) \right) 
        =
        \p_{\bar{J}} \left( \varphi^\prime \sim \varphi^\star \right) .
    \end{align}
    So conditioned on the event that all $\varphi^\prime \in \chi_r(\varpi)\setminus \{\varphi\}$ are good, the graph $G^{\mathrm{mx}} \left( \chi_r(\varpi) \setminus \{\varphi \} \right)$ sampled from $\p_{q,J}$ stochastically dominates the graph $G^{\mathrm{bl}} \left( \chi_r(\varpi) \setminus \{\varphi \} \right)$ sampled from $\p_{\bar{J}}$, which implies that
    \begin{multline}\label{mx bl ineq}
         \p_{q,J} \left( G^{\mathrm{mx}} \left( \chi_r(\varpi) \setminus \{\varphi \} \right) \text{ not connected} \;\big|  \;\text{all } \varphi^\prime \in \chi_r(\varpi) \setminus \{\varphi\} \text{ are good} \right) \\
         \leq
         \p_{\bar{J}} \left( G^{\mathrm{bl}} \left( \chi_r(\varpi) \setminus \{\varphi \} \right) \text{ not connected} \right).
    \end{multline}
    Since the events $\{\varphi \text{ bad}\}$ and $\{G^{\mathrm{mx}}\left(\chi_r(\varpi) \setminus \{\varphi\} \right) \text{ not connected}\}$ are independent, we get that
    \begin{align*}
        & \p_{q,J} \left(\varpi \text{ declared bad in step (1.1C)} \right)
        \\
        &\hspace{10mm}
        =
        \p_{q,J} \Bigg( \bigcup_{\varphi \in \chi_r(\varpi)} \Big\{ \varphi \text{ bad}, G^{\mathrm{mx}} \left( \chi_r(\varpi) \setminus \{\varphi \} \right) \text{ not connected},
        \\
        &
        \hspace{75mm}
          \text{ all } \varphi^\prime \in \chi_r(\varpi) \setminus \{\varphi\} \text{ are good}\Big\} \Bigg)
        \\
        &
        \hspace{10mm}
        \overset{\eqref{mx bl ineq}}{\leq} \sum_{\varphi \in \chi_r(\varpi)}
        \p_{q,J} \left( \varphi \text{ bad} \right) \cdot
        \\
        &
        \hspace{30mm} \p_{q,J} \left( G^{\mathrm{mx}} \left( \chi_r(\varpi) \setminus \{\varphi \} \right) \text{ not connected} \big|  \text{all } \varphi^\prime \in \chi_r(\varpi) \setminus \{\varphi\} \text{ are good} \right) 
        \\
        &
        \hspace{10mm}
        \leq \sum_{\varphi \in \chi_r(\varpi)}
        \p_{q,J} \left( \varphi \text{ bad} \right) \p_{\bar{J}} \left( G^{\mathrm{bl}} \left( \chi_r(\varpi) \setminus \{\varphi \} \right) \text{ not connected} \right) .
    \end{align*}
    Finally, we need to consider the term in \eqref{bcd bad} when X $=$ D. Say that $\varphi$ is the unique mediocre element in $\chi_r(\varpi)$. If $\varpi$ is declared bad in step (1.1D), then either $G^{\mathrm{mx}}\left(\chi_r(\varpi)\setminus\{\varphi\}\right)$ is not connected or there exist $\gamma \in \{L^{-r},\ldots,1-L^{-r} \}$ and a connected component $A \subsetneq G^{\mathrm{mx}} \left( \chi_{r}(\varphi) \right)$ with $|A| \geq \gamma L^r$ such that $\varphi$ is $\gamma$-mediocre, $K_{\max}(\sigma) \nsim K_{\max}(\varphi^\prime)$ for all $\sigma \in A, \varphi^\prime \in \chi_{r}(\varpi)\setminus \{\varphi \}$, and all $\sigma \in A, \varphi^\prime \in \chi_r(\varpi)\setminus \{\varphi\}$ are good. Call the latter event $\mathcal{D}_{\varphi, A}$, i.e.,
    \begin{align*}
        &\mathcal{D}_{\varphi, A} \coloneqq \bigcap_{\sigma \in A} \left\{ \sigma \text{ good} \right\} \cap \left\{ \varphi \text{ is } \gamma\text{-mediocre} \right\} 
        \\
        &
        \hspace{3cm} 
        \cap
        \bigcap_{\sigma \in A, \varphi^\prime \in \chi_r(\varpi)\setminus \{\varphi\}} \left\{
        K_{\max}(\sigma) \nsim K_{\max}(\varphi^\prime) \right\}    \cap \bigcap_{\varphi^\prime \in \chi_r(\varpi)\setminus \{\varphi\} } \left\{ \varphi^\prime \text{ good} \right\} .
    \end{align*}
    We denote the four events in the definition of $\cD_{\varphi, A}$ by $\cA_1,\ldots,\cA_4$, i.e.,
    \begin{align*}
        \cA_1 & = \bigcap_{\sigma \in A} \left\{ \sigma \text{ good} \right\} ,
        \\
        \cA_2 & = \left\{ \varphi \text{ is } \gamma\text{-mediocre} \right\} ,
        \\
        \cA_3 & =
        \bigcap_{\sigma \in A, \varphi^\prime \in \chi_r(\varpi)\setminus \{\varphi\}} \left\{
        K_{\max}(\sigma) \nsim K_{\max}(\varphi^\prime) \right\}, \text{ and}  
        \\
        \cA_4 & =
        \bigcap_{\varphi^\prime \in \chi_r(\varpi)\setminus \{\varphi\} } \left\{ \varphi^\prime \text{ good} \right\} .
    \end{align*}
    Let $\cF$ be the $\sigma$-algebra generated by all the edges with both endpoints in $\psi$ for some $\psi \in \chi_r(\varpi)$, i.e.,
    \begin{equation*}
        \cF = \sigma \left( \omega(\{x,y\}) : x,y \in \psi , \psi \in \chi_r(\varpi)  \right).
    \end{equation*}
    The events $\cA_1, \cA_2$, and $\cA_4$ are measurable with respect to $\cF$, so that
    \begin{equation}\label{conditional}
        \p_{q,J} \left( \cD_{\varphi, A} \right) 
        = 
        \E_{q,J} \left[ \p_{q,J} \left( \cA_1 \cap \cA_2 \cap \cA_3 \cap \cA_4 \big| \cF \right) \right]
        =
        \E_{q,J} \left[ \mathbbm{1}_{ \cA_1 \cap \cA_2 \cap \cA_4} \p_{q,J} \left( \cA_3 \big| \cF \right) \right] .
    \end{equation}
    For two good blocks $\sigma = \Lambda_{M_g+(j-2)r}(u) \in A$ and $\varphi^\prime = \Lambda_{M_g+(j-1)r}(v) \in \chi_{r}(\varpi)\setminus \{\varphi\}$, we have that $|K_{\max}(\sigma)| \geq (1-\zeta)^2 \theta L^{M_g+(j-2)r}$ and $|K_{\max}(\varphi^\prime)| \geq (1-\zeta)^2 \theta L^{M_g+(j-1)r}$, so that for all $\omega \in \cA_1 \cap \cA_4$
    \begin{align*}
        & \p_{q,J} \left( K_{\max}(\sigma) \nsim K_{\max}(\varphi^\prime) | \cF \right) (\omega)
        \\
        & \hspace{4cm} \leq \exp \left( - (1-\zeta)^2 \theta L^{M_g+(j-2)r} (1-\zeta)^2 \theta L^{M_g+(j-1)r} J(u,v) \right)
        \\
        & \hspace{4cm}
        \leq
        \exp \left( - L^{M_g+(j-2)r} L^{M_g+(j-1)r} \bar{J}(u,v) \right)
        =
        \p_{\bar{J}} \left( \sigma \nsim \varphi^\prime \right).
    \end{align*}
    Since the connections between different block $\sigma \in A$ and $\varphi^\prime \in \chi_r(\varphi) \setminus \{\varphi \}$ are independent, this also implies that for all $\omega \in \cA_1 \cap \cA_4$
    \begin{align*}
        & \p_{q,J} \left( \bigcap_{\sigma \in A, \varphi^\prime \in \chi_r(\varpi)\setminus \{\varphi\}} \hspace{-4mm} \left\{
        K_{\max}(\sigma) \nsim K_{\max}(\varphi^\prime) \right\} \Big| \cF \right) (\omega)
        \\
        &
        \hspace{4cm}
        \leq 
        \p_{\bar{J}} \left( \bigcap_{\sigma \in A, \varphi^\prime \in \chi_r(\varpi)\setminus \{\varphi\}} \hspace{-4mm} \left\{
        \sigma \nsim \varphi^\prime \right\} \right)
        =
        \p_{\bar{J}} \left(  A \nsim \chi_r(\varpi)\setminus \{\varphi\} \right) .
    \end{align*}
    Inserting this inequality into \eqref{conditional}, we get that
    \begin{multline*}
        \p_{q,J} \left( \cD_{\varphi, A} \right) 
        = 
        \E_{q,J} \left[ \mathbbm{1}_{ \cA_1 \cap \cA_2 \cap \cA_4} \p_{q,J} \left( \cA_3 \big| \cF \right) \right]
        \leq 
        \p_{\bar{J}} \left(  A \nsim \chi_r(\varpi)\setminus \{\varphi\} \right)
        \E_{q,J} \left[ \mathbbm{1}_{ \cA_1 \cap \cA_2 \cap \cA_4} \right] \\
        \leq 
        \p_{\bar{J}} \left(  A \nsim \chi_r(\varpi)\setminus \{\varphi\} \right)
        \E_{q,J} \left[ \mathbbm{1}_{ \cA_2} \right] 
        =
        \p_{\bar{J}} \left(  A \nsim \chi_r(\varpi)\setminus \{\varphi\} \right)
        \p_{q,J} \left( \varphi \text{ is $\gamma$-mediocre} \right) 
        .
    \end{multline*}
    Remember that if $\varpi$ is declared bad in step (1.1D), then either there exists $\varphi \in \chi_r(\varpi)$ that is mediocre, all $\varphi^\prime \in \chi_r(\varpi)\setminus\{\varphi\}$ are good, and $G^{\mathrm{mx}}\left(\chi_r(\varpi)\setminus\{\varphi\}\right)$ is not connected, or there exists $\varphi,A$ such that the event $\cD_{\varphi,A}$ occurs. We bounded the probability of the event $\cD_{\varphi,A}$ above. For the first event, observe that for every $\varphi \in \chi_r(\varpi)$ we have that
    \begin{align*}
    	& \p_{q,J} \left( \varphi \text{ mediocre}, \bigcap_{\varphi^\prime \in \chi_r(\varpi)\setminus \{\varphi\}} \hspace{-3mm} \left\{\varphi^\prime \text{ good}\right\} , G^{\mathrm{mx}}\left(\chi_r(\varpi)\setminus\{\varphi\}\right) \text{ not connected} \right)
        \\
        &
        \hspace{7mm}
        \leq
        \p_{q,J} \left( \varphi \text{ mediocre} \right) \p_{q,J} \left( G^{\mathrm{mx}}\left(\chi_r(\varpi)\setminus\{\varphi\}\right) \text{ not connected} \Bigg| \bigcap_{\varphi^\prime \in \chi_r(\varpi)\setminus \{\varphi\}} \hspace{-3mm} \left\{\varphi^\prime \text{ good}\right\} \right)
        \\
        &
        \overset{\eqref{mx bl ineq}}{\leq} p_{q,J} \left( \varphi \text{ mediocre} \right) \p_{\bar{J}} \left( G^{\mathrm{bl}}\left(\chi_r(\varpi)\setminus\{\varphi\}\right) \text{ not connected} \right) .
    \end{align*}
    Thus, we can bound the probability that $\varpi$ gets declared bad in step (1.1D) via a union bound over all $\varphi \in \chi_r(\varphi)$, $\gamma \in \{L^{-r},\ldots,1-L^{-r}\}$, and sets $A \subset \chi_r(\varphi) $ with $ |A| \geq \gamma L^r$. We get that
    \begin{align*}
        & \p_{q,J} \left(\varpi \text{ declared bad in step (1.1D)} \right)\\
        &
        \leq
        \sum_{\varphi \in \chi_r(\varpi)} \p_{q,J} \left( \varphi \text{ mediocre}, \bigcap_{\varphi^\prime \in \chi_r(\varpi)\setminus \{\varphi\}} \hspace{-3mm} \left\{\varphi^\prime \text{ good}\right\} , G^{\mathrm{mx}}\left(\chi_r(\varpi)\setminus\{\varphi\}\right) \text{ not connected} \right)
        \\
        &
        \hspace{47mm} +
        \sum_{\gamma \in \{L^{-r},\ldots,1-L^{-r}\} } \sum_{\varphi \in \chi_r(\varpi)}  \sum_{A \subset \chi_r(\varphi) : |A| \geq \gamma L^r} \p_{q,J} \left( \mathcal{D}_{\varphi, A} \right)
        \\
        &
        \leq \sum_{\varphi \in \chi_r(\varpi)} \p_{q,J} \left( \varphi \text{ mediocre} \right) \p_{\bar{J}} \left( G^{\mathrm{bl}}\left(\chi_r(\varpi)\setminus\{\varphi\}\right) \text{ not connected} \right)
        \\
        & \hspace{5mm}
        +
        \sum_{\gamma \in \{L^{-r}, \ldots, 1- L^{-r} \} }  \sum_{\varphi \in \chi_{r}(\varpi)} \p_{q,J} \left( \varphi \text{ is $\gamma$-mediocre} \right) \sum_{A \subset \chi_{r}(\varphi) : |A|\geq \gamma L^r } \p_{\bar{J}} \left( A \nsim \chi_{r}(\varpi) \setminus \{ \varphi \} \right) .
    \end{align*}
    Combining the four upper bounds now established for the four terms in \eqref{1.0 bad} and \eqref{bcd bad} and using that $\p_{q,J} \left( \varphi \text{ not good} \right) = \p_{q,J} \left( \varphi \text{ bad} \right) + \p_{q,J} \left( \varphi \text{ mediocre} \right)$ implies inequality \eqref{eq:prop57 ineq1}, the first inequality in Proposition \ref{lem:longinequali}.
    If $\varpi \in \Lambda_{M_{g+1},\infty}$, there is one other way how $\varpi$ can be bad, namely if $\varpi$ is declared bad in step (2.) of the algorithm above. That is, if there exists $m \in \{M_{g-1},M_{g-1}+r,\ldots,M_{g}\}$ such that there are at least $\log(g)^3\lceil L^{\bar{r}_{g+1}+M_g-m} g^{-1-\eps} \rceil$ many blocks $\sigma \in \Lambda_{M_g,\infty}$ with $\sigma \subseteq \varphi$ for which $\operatorname{Def}(\sigma) = m$. First, note that for a block $\sigma \in \Lambda_{M_g,\infty}$ the probability that $\operatorname{Def}(\sigma)=m$ is bounded from above by
    \begin{equation}\label{def sigma m bound}
        \p_{q,J} \left( \operatorname{Def}(\sigma) = m \right) \leq \sum_{\psi \in \Lambda_{m,\infty} : \psi \subseteq \sigma} \p_{q,J} \left( \psi \text{ bad} \right) = L^{M_{g}-m}  \p_{q,J} \left( \Lambda_{m}(0) \text{ bad} \right) .
    \end{equation}
    The event $\{\operatorname{Def}(\sigma)=m\}$ depends only on edges with both ends in $\sigma$, so that for different blocks $\sigma \in \Lambda_{M_g,\infty}$, the events $\{\operatorname{Def}(\sigma)=m\}$ are independent. We write
    \begin{align*}
        \left| \left\{ \sigma \in \Lambda_{M_g,\infty} : \sigma \subseteq \varpi , \operatorname{Def}(\sigma)=m \right\} \right| = \sum_{\sigma \in \Lambda_{M_g,\infty} : \sigma \subseteq \varpi} \mathbbm{1}_{\operatorname{Def}(\sigma)=m} 
    \end{align*}
    and get from Markov's inequality that
    \begin{align*}
        & \p_{q,J} \left(  \left| \left\{ \sigma \in \Lambda_{M_g,\infty} : \sigma \subseteq \varpi , \operatorname{Def}(\sigma)=m \right\} \right| > \log(g)^3\lceil L^{\bar{r}_{g+1}+M_g-m} g^{-1-\eps} \rceil \right)
        \\
        & \hspace{9mm}
        \leq \p_{q,J} \left( \exp \left( \sum_{\sigma \in \Lambda_{M_g,\infty} : \sigma \subseteq \varpi} \mathbbm{1}_{\operatorname{Def}(\sigma)=m} \right) > \exp \left( \log(g)^3\lceil L^{\bar{r}_{g+1}+M_g-m} g^{-1-\eps} \rceil \right) \right)
        \\
        & \hspace{9mm}
        \leq \E_{q,J} \left[ \exp \left( \sum_{\sigma \in \Lambda_{M_g,\infty} : \sigma \subseteq \varpi} \mathbbm{1}_{\operatorname{Def}(\sigma)=m} \right) \right] \exp \left( - \log(g)^3\lceil L^{\bar{r}_{g+1}+M_g-m} g^{-1-\eps} \rceil \right)
        \\
        & \hspace{9mm}
        =
         \prod_{\sigma \in \Lambda_{M_g,\infty} : \sigma \subseteq \varpi} \E_{q,J} \left[ \exp \left( \mathbbm{1}_{\operatorname{Def}(\sigma)=m} \right) \right] \exp \left(- \log(g)^3\lceil L^{\bar{r}_{g+1}+M_g-m} g^{-1-\eps} \rceil \right)
        \\
        & \hspace{9mm}
        \overset{\eqref{eq:elementary markov}}{\leq}
        \prod_{\sigma \in \Lambda_{M_g,\infty} : \sigma \subseteq \varpi} \exp \left( e \p_{q,J} \left( \operatorname{Def}(\sigma)=m \right) \right) \exp \left( -\log(g)^3\lceil L^{\bar{r}_{g+1}+M_g-m} g^{-1-\eps} \rceil \right)
        \\
        & \hspace{9mm}
        \overset{\eqref{def sigma m bound}}{\leq}
        \prod_{\sigma \in \Lambda_{M_g,\infty} : \sigma \subseteq \varpi} \exp \left( e L^{M_g-m} \p_{q,J} \left( \Lambda_m(0) \text{ bad} \right) \right) \exp \left(- \log(g)^3\lceil L^{\bar{r}_{g+1}+M_g-m} g^{-1-\eps} \rceil \right)
        \\
        & \hspace{9mm}
        =
        \exp \left( e L^{\bar{r}_{g+1}} L^{M_g-m} \p_{q,J} \left( \Lambda_m(0) \text{ bad} \right) -  \log(g)^3\lceil L^{\bar{r}_{g+1}+M_g-m} g^{-1-\eps} \rceil \right) .
    \end{align*}
    In the above calculation, we also used the elementary inequality 
    \begin{multline}\label{eq:elementary markov}
        \E_{q,J} \left[ \exp \left( \mathbbm{1}_{\operatorname{Def}(\sigma)=m} \right) \right] = \p_{q,J} \left( \operatorname{Def}(\sigma)=m \right) e  + \left( 1- \p_{q,J} \left( \operatorname{Def}(\sigma)=m \right) \right) \\ \leq \exp \left( e \p_{q,J} \left( \operatorname{Def}(\sigma)=m \right) \right).
    \end{multline}
    A further union bound over all $m \in \{M_{g-1},M_{g-1}+r,\ldots,M_{g}-r\}$ shows that
    \begin{align*}
        &\p_{q,J} \left(\varpi \text{ declared bad in step (2.)} \right)
        \\
        &
        \leq \hspace{-3mm}
        \sum_{m \in \{M_{g-1},M_{g-1}+r,\ldots,M_{g}-r\} }  \hspace{-15mm} \p_{q,J} \left(  \left| \left\{ \sigma \in \Lambda_{M_g,\infty} : \sigma \subseteq \varpi , \operatorname{Def}(\sigma)=m \right\} \right| > \log(g)^3\lceil L^{\bar{r}_{g+1}+M_g-m} g^{-1-\eps} \rceil \right)
        \\
        & \leq \hspace{-3mm}
         \sum_{m \in \{M_{g-1},M_{g-1}+r,\ldots,M_{g}-r\} } \hspace{-15mm} \exp \left( e L^{\bar{r}_{g+1}} L^{M_g-m} \p_{q,J} \left( \Lambda_m(0) \text{ bad} \right) -  \log(g)^3\lceil L^{\bar{r}_{g+1}+M_g-m} g^{-1-\eps} \rceil \right) .
    \end{align*}
    A union bound over all the cases in which $\varpi$ can be declared ((1.0), (1.1B), (1.1C), (1.1D), and (2.)) shows inequality \eqref{eq:prop57 ineq5}. \\
    
    We are left to show inequalities \eqref{eq:prop57 ineq2} and \eqref{eq:prop57 ineq3}. Here, we do not make a distinction, whether $\varpi$ is of the form $\Lambda_{M_g}(u)$ or not. So say that $\varpi \in \Lambda_{M_g+jr}$ with $jr \in \{r,2r,\ldots,\bar{r}_{g+1}\}$. From \eqref{eq:mx bl domination} we get that 
    \begin{align*}
        \p_{q,J} \left( K_{\max}(\varphi^\prime) \sim K_{\max}(\varphi^\star) \mid \varphi^\prime, \varphi^\star \text{ good} \right) \geq
        \p_{\bar{J}} \left( \varphi^\prime \sim \varphi^\star \right) 
    \end{align*}
    for all $\varphi^\prime, \varphi^\star \in \chi_r(\varpi)$. So conditioned on the event that all $\varphi^\prime \in \chi_r(\varpi)$ are good, the graph $G^{\mathrm{mx}} \left( \chi_r(\varpi)\right)$ sampled from $\p_{q,J}$ stochastically dominates the graph $G^{\mathrm{bl}} \left( \chi_r(\varpi) \right)$ sampled from $\p_{\bar{J}}$.
    If $\varpi$ is $\gamma$-mediocre for some $\gamma \in \{L^{-r},\ldots,1-L^{-r}\}$, then all $\varphi \in \chi_r(\varpi)$ need to be good.
    Monotonicity thus implies that
    \begin{align*}
        &\p_{q,J} \left( \varpi \text{ is $\gamma$-mediocre} \right) \leq \p_{q,J} \left( \varpi \text{ is $\gamma$-mediocre} \big| \text{all $\varphi \in \chi_r(\varpi)$ are good} \right) 
        \\
        &
        \hspace{14mm}
        \leq
        \p_{q,J} \left( G^{\mathrm{mx}} \left( \chi_{r}(\varpi) \right) \text{ has a cluster of density at most $\gamma$} \big| \text{all $\varphi \in \chi_r(\varpi)$ are good}\right)
        \\
        & \hspace{14mm}
        \leq
        \p_{\bar{J}} \left( G^{\mathrm{bl}} \left( \chi_{r}(\varpi) \right) \text{ has a cluster of density at most $\gamma$} \right),
    \end{align*}    
    completing the proof.
\end{proof}

We continue with the proof of Proposition \ref{prop:bad mediocre prop}

\begin{proof}[Proof of Proposition \ref{prop:bad mediocre prop}]
    
    We prove Proposition \ref{prop:bad mediocre prop} via induction over $k \in \{ r,2r,\ldots, \bar{r}_{g+1} \}$.

    Let $\varpi = \Lambda_{M_{g}+jr}(0)$, with $jr \in \{r,2r,\ldots,\bar{r}_{g+1}-r\}$ and assume that for all $k \in \{0,r,\ldots,jr-r\}$ and for all $\varphi \in \Lambda_{M_g+k,\infty}$
    \begin{align*}
		& \p_{q,J} \left(\varphi \text{ is bad}\right) \leq g^{-1-\eps}, \text{ and } \p_{q,J} \left(\varphi \text{ is $\gamma$-mediocre}\right) \leq g^{-(1-\gamma)-2\eps} .
	\end{align*}
    By these induction assumptions one has for all $\varphi \in \Lambda_{M_{g}+k,\infty}$ that
    \begin{align*}
        &\p_{q,J} \left( \varphi \text{ not good} \right) \leq \p_{q,J} \left( \varphi \text{ bad} \right) + \sum_{\gamma \in \{L^{-r},\ldots, 1 \} : \gamma \leq \frac{1}{2} } \p_{q,J} \left( \varphi \text{ is $\gamma$-mediocre} \right)
        \\
        &
        \hspace{5cm}
        \leq g^{-1-\eps} + \sum_{\gamma \in \{L^{-r},\ldots, 1 \} : \gamma \leq \frac{1}{2} } g^{-(1-\gamma) -2 \eps }
        \leq L^{r} g^{-\frac{1}{2}-\eps}
    \end{align*}
    Remember that $\bar{J}$ and $\eps$ are such that for all $\varphi \in \chi_r(\varpi)$ and all non-empty subsets $A \subseteq \chi_r(\varphi)$
    \begin{align*}
        & \p_{\bar{J}} \left( G^{\mathrm{bl}} \left( \chi_{r}(\varpi) \setminus \{\varphi \} \right) \text{ not connected} \right) \leq \left( M_g + jr - r \right)^{-\frac{1}{2}-\eps} \leq g^{-\frac{1}{2}-\eps} \text{ and that}
        \\
        &
        \p_{\bar{J}} \left( A \nsim \chi_{r}(\varpi) \setminus \{ \varphi \} \right) \leq \left( M_g + jr - r \right)^{-L^{-r}|A|} \leq g^{-L^{-r}|A|} .
    \end{align*}
    Lemma \ref{lem:longinequali} then implies that
    \begin{align*}
        &\p_{q,J} \left( \varpi \text{ bad} \right) 
        \\
        &
        \leq
        \sum_{\varphi, \varphi^\prime \in \chi_{r}(\varpi)} \p_{q,J} \left( \varphi \text{ not good} \right) \p_{q,J} \left( \varphi^\prime \text{ not good} \right) 
        \\
        &
        \hspace{10mm}
        + \hspace{-3mm} \sum_{\varphi \in \chi_{r}(\varpi)} \p_{q,J} \left( \varphi \text{ not good} \right) \p_{\bar{J}} \left( G^{\mathrm{bl}} \left( \chi_{r}(\varpi) \setminus \{\varphi \} \right) \text{ not connected} \right)
        \\
        &
        \hspace{10mm}
        + \hspace{-3mm} \sum_{\gamma \in \{L^{-r}, \ldots, 1- L^{-r} \} }  \sum_{\varphi \in \chi_{r}(\varpi)} \p_{q,J} \left( \varphi \text{ is $\gamma$-mediocre} \right) \hspace{-2mm} \sum_{A \subset \chi_{r}(\varphi) : |A|\geq \gamma L^r } \hspace{-2mm}\p_{\bar{J}} \left( A \nsim \chi_{r}(\varpi) \setminus \{ \varphi \} \right) 
        \\
        &
        \hspace{10mm}
        + \hspace{-3mm}
        \sum_{m \in \{0,r,\ldots, jr - r \} } \left( L^{jr - m} \p_{q,J} \left( \Lambda_{M_g + m}(0) \text{ bad} \right) \right)^N
        \\
        &
        \leq 
        \sum_{\varphi, \varphi^\prime \in \chi_{r}(\varpi)} \left( L^r g^{-\frac{1}{2}-\eps} \right)^2 + \sum_{\varphi \in \chi_{r}(\varpi)} \left( L^r g^{-\frac{1}{2}-\eps} \right) g^{-\frac{1}{2}-\eps} 
        \\
        &
        \hspace{10mm}
        + \hspace{-3mm}
        \sum_{ \gamma \in \{L^{-r}, \ldots, 1- L^{-r} \} }  \sum_{\varphi \in \chi_{r}(\varpi)} g^{-(1-\gamma)-2\eps} \sum_{ \substack{A \subset \chi_{r}(\varphi) :  |A|\geq\gamma L^r } } g^{-L^{-r}|A|} 
        \\
        &
        \hspace{10mm}
        + \hspace{-3mm}
        \sum_{m \in \{0,r,\ldots, jr - r \} } \left( L^{jr - m} g^{-1-\eps} \right)^N
        \\
        &
        \leq
        L^{4r} g^{-1-2 \eps} + L^{2r} g^{-1-2 \eps} + \sum_{ \gamma \in \{L^{-r}, \ldots, 1- L^{-r} \} } L^r g^{-(1-\gamma)-2\eps} 2^{L^r} g^{-\gamma} + jr \left( L^{\bar{r}_{g+1}} g^{-1-\eps} \right)^N
        \\
        &
        \leq
        \left( L^{4r} + L^{2r} + L^{2r} 2^{L^r} \right) g^{-1-2\eps} + \bar{r}_{g+1} \left( L^r \log(g+1)^{10} (g+1) g^{-1-\eps} \right)^N
        \\
        &
        \leq
        \left( L^{4r} + L^{2r} + L^{2r} 2^{L^r} \right) g^{-1-2\eps} + \log(g)^2 L^{Nr} \log(g+1)^{10N} 2^{N} g^{-\eps N}
        \\
        &
        \leq
        \left( L^{4r} + L^{2r} + L^{2r} 2^{L^r} \right) g^{-1-2\eps} + L^{Nr} \log(g+1)^{10N+2} g^{-10}
        \leq
        (g+1)^{-1-\eps},
    \end{align*}
    where the last inequality holds for all $g$ large enough. In the above calculation, we also used the assumptions $\bar{r}_{g+1} \leq \log(g)^2$, $L^{\bar{r}_{g+1}} \leq L^{r} \log(g+1)^{10}(g+1)$, and $\eps N > 10$. This bounds the probability that $\varpi$ is bad from above. To control the probability that $\varpi$ is mediocre, observe that, again by Lemma \ref{lem:longinequali}, we have that
    \begin{multline*}
        \p_{q,J} \left( \varpi \text{ is $\gamma$-mediocre} \right) \leq
        \p_{\bar{J}} \left( G^{\mathrm{bl}} \left( \chi_{r}(\varpi) \right) \text{ has a cluster of density at most $\gamma$} \right) 
        \\
        \leq \left( M_{g}+jr-r \right)^{-(1-\gamma)-2\eps} \leq \left( g+1 \right)^{-(1-\gamma)-2\eps} .
    \end{multline*}
    Using this inductively, we see that
    \begin{align*}
        & \p_{q,J} \left( \varpi \text{ bad} \right) \leq (g+1)^{-1-\eps} \text{ and }
        \p_{q,J} \left( \varpi \text{ is $\gamma$-mediocre} \right) \leq (g+1)^{-(1-\gamma)-2\eps}
    \end{align*}
    for all $\varpi \in \Lambda_{M_g + k,\infty}$ with $k \in \{r,2r,\ldots,\bar{r}_{g+1}-r\}$. To study the probability that $\varpi$ is bad for $k= \bar{r}_{g+1}$, note that, by Lemma \ref{lem:longinequali}, we need to study the term
    \begin{equation}\label{eq:sum m}
        \sum_{m \in \{M_{g-1}, M_{g-1}+r,\ldots, M_{g} - r \} } \hspace{-9mm} \exp \left( e L^{\bar{r}_{g+1}} L^{M_{g}-m} \p_{q,J} \left( \Lambda_{m}(0) \text{ bad} \right) - \log(g)^3 \lceil L^{\bar{r}_{g+1} + M_{g} - m} g^{-1-\eps} \rceil \right) .
    \end{equation}
    For the term inside the exponent, we have for $g$ large enough that
    \begin{align*}
        & e L^{\bar{r}_{g+1}} L^{M_{g}-m} \p_{q,J} \left( \Lambda_{m}(0) \text{ bad} \right) - \log(g)^3 \lceil L^{\bar{r}_{g+1} + M_{g} - m} g^{-1-\eps} \rceil
        \\
        & \hspace{40mm}
        \leq
        e L^{\bar{r}_{g+1}} L^{M_{g}-m} (g-1)^{-1-\eps} - \log(g)^3 \lceil L^{\bar{r}_{g+1} + M_{g} - m} g^{-1-\eps} \rceil
        \\
        & \hspace{40mm}
        \leq
        2 e L^{\bar{r}_{g+1}} L^{M_{g}-m} g^{-1-\eps} - \log(g)^3 \lceil L^{\bar{r}_{g+1} + M_{g} - m} g^{-1-\eps} \rceil
        \\
        & \hspace{40mm}
        \leq
        \left( 2e - \log(g)^3 \right) \lceil L^{\bar{r}_{g+1} + M_{g} - m} g^{-1-\eps} \rceil
        \leq
         2e - \log(g)^3 .
    \end{align*}
    Inserting this into \eqref{eq:sum m}, we get that
    \begin{align*}
        & \sum_{m \in \{M_{g-1}, M_{g-1}+r,\ldots, M_{g} - r \} } \hspace{-9mm} \exp \left( e L^{\bar{r}_{g+1}} L^{M_{g}-m} \p_{q,J} \left( \Lambda_{m}(0) \text{ bad} \right) - \log(g)^3 \lceil L^{\bar{r}_{g+1} + M_{g} - m} g^{-1-\eps} \rceil \right)
        \\
        & \hspace{40mm}
        \leq
        \sum_{m \in \{M_{g-1}, M_{g-1}+r,\ldots, M_{g} - r \} } \exp \left(  2e - \log(g)^3 \right)
        \\
        & \hspace{40mm}
        \leq
        \bar{r}_{g} \exp \left(  2e - \log(g)^3 \right) \leq
        \log(g)^2 \exp \left(  2e - \log(g)^3 \right) \leq g^{-2},
    \end{align*}
    where the last inequality holds for all large enough $g$. This inequality can be used to upper bound the last expression in the sum \eqref{eq:prop57 ineq5}. The remaining four terms can be bounded in the same way as in the case $jr \in \{r,2r,\ldots,\bar{r}_{g+1} -r \}$ above. Thus we get for all $\varpi \in \Lambda_{M_g+\bar{r}_{g+1},\infty}$
    \begin{equation*}
        \p_{q,J}\left( \varpi \text{ bad} \right) \leq \left( L^{4r} + L^{2r} + L^{2r} 2^{L^r} \right) g^{-1-2\eps} + L^{Nr} \log(g+1)^{10N+2} g^{-10} + g^{-2} \leq (g+1)^{-1-\eps},
    \end{equation*}
    where the last inequality holds for all large enough $g$. The term $\p_{q,J}\left( \varpi \text{ is $\gamma$-mediocre} \right)$ can be estimated as in the case $\varpi \in \Lambda_{M_g+k,\infty}$ with $k \in \{r,2r,\ldots,\bar{r}_{g+1}-r\}$. For $\varpi \in \Lambda_{M_{g+1},\infty}$, one has by a union bound for all $m \in \{M_{g},M_{g}+r,\ldots,M_{g+1}-r\}$ that
    \begin{multline*}
        \p_{q,J} \left( \operatorname{Def}(\varpi) = m \right) \leq \sum_{\varphi \in \Lambda_{m,\infty}: \varphi \subseteq \varpi} \p_{q,J} \left( \varphi \text{ bad} \right)
        \\
        = L^{M_{g+1}-m} \p_{q,J} \left( \Lambda_m(0) \text{ bad} \right)
        \leq L^{M_{g+1}-m} g^{-1-\eps}
    \end{multline*}
    as claimed.
\end{proof}

Finally, we give the proof of Theorem \ref{theo:existence mixed}.

\begin{proof}[Proof of Theorem \ref{theo:existence mixed}]
	
    Take $h^\star \geq g_0$ large enough so that the results of the previous lemmas hold for $m \geq h^\star$. We will now show that for $q\in (0,1)$ and an $N_0, N_1 \in \N$ large enough, the kernel $J$ defined by 
	\begin{align*}
	J(e) = \begin{cases}
	\frac{a \log\log(|e|)}{|e|^{2}} & \text{ for } |e| \geq N_1\\
	N_0 & \text{ for } |e| < N_1
	\end{cases}
	\end{align*}
	satisfies $\p_{q,J} \left( |K_0| = \infty \right)> 0$. Take $N_1 = L^{M_{h^\star}}$ and take $q\in (0,1), N_0 \in \R_{>0}$ large enough so that for $u\in \mathbb{H}_L$
	\begin{align*}
	\p_{q,J} \left( \Lambda_{M_{h^\star}}(u) \text{ not connected} \right) \leq (h^\star)^{-1-\eps}.
	\end{align*}
    Lemma \ref{lem:density evolu} and Proposition \ref{prop:bad mediocre prop} show that for all large enough $g\geq h^\star$
    \begin{align*}
        &\p_{q,J} \left( \left| K_{\max} (\Lambda_{M_g}(0)) \right| \geq (1-\zeta)\theta \left| \Lambda_{M_g}(0) \right| \right)
        \geq
        \p_{q,J} \left( \Lambda_{M_g}(0) \text{ good} \right)
        \\
        &
        \hspace{15mm}
        =
        1-\p_{q,J} \left( \Lambda_{M_g}(0) \text{ is bad} \right) - \sum_{\gamma \in \{L^{-r},\ldots,1-L^{-r}\}} \p_{q,J} \left( \Lambda_{M_g}(0) \text{ is $\gamma$-mediocre} \right)
        \geq \frac{1}{2}.
    \end{align*}
    Using the isometry-invariance of $J$ and $(1-\zeta) \theta \geq 0.5$, we get that
	\begin{multline*}
		\p_{q,J} \left(|K_0| \geq  \frac{L^{M_{g}}}{2} \right)
		\geq
		\p_{q,J} \left( \left|K_0\left(\Lambda_{M_{g}}\right) \right| \geq \frac{L^{M_{g}}}{2} \right)
		\geq \frac{1}{2} \p_{q,J} \left( \left| K_{\max}\left(\Lambda_{M_{g}} (0) \right) \right| \geq \frac{L^{M_{g}}}{2} \right)
        \\
        \geq \frac{1}{2} \p_{q,J} \left( \left| K_{\max} (\Lambda_{M_g}(0)) \right| \geq (1-\zeta)\theta \left| \Lambda_{M_g}(0) \right| \right) \geq \frac{1}{4} > 0
	\end{multline*}
	and letting $g \to \infty$ finally shows that $\p_{q,J} \left(|K_0| = \infty\right) \geq \frac{1}{4} > 0$.
\end{proof}

\section{Proof of the hierarchical analogue of the Imbrie–Newman conjecture}\label{sec:imbrienewman}

It remains to prove that for a proper family of kernels $\cJ$ on $\H_L$ satisfying $\lambda_c(\cJ) \in (0,\infty)$ and $\cJ \approx n^{-2} \log\log (n)$, one has $\theta(\lambda_c)^2 \beta(\lambda_c) = 1$, under the assumption that the limit 
    \begin{equation}\label{eq:betalambdac}
		\beta(\lambda_c) = \lim_{|e|\to \infty} \cJ(\lambda_c,e) \frac{L^{-1} |e|^{2}}{\log\log(|e|)} .
	\end{equation}
    exists.

\begin{proof}[Proof of Theorem \ref{theo:ImbrieNewman}]
    As $\theta(\lambda)$ is continuous from the right (being an infimum of the continuous increasing functions $\p_\lambda (|K_0|\geq m)$), it follows from Proposition \ref{propo:discont} that $\theta(\lambda_c)^2 \beta(\lambda_c) \geq 1$. Thus, we are left to show that this inequality is indeed an equality. Suppose for the sake of contradiction that the limit defined in \eqref{eq:betalambdac} exists and that
    $\theta(\lambda_c)^2 \beta(\lambda_c) > 1$. Choose $\eps \in (0, \theta(\lambda_c)/10)$ and $\beta^\star < \beta(\lambda_c)$ so that $\left(\theta(\lambda_c)-2\eps\right)^2 \beta^\star > 1$. Let $N_0, N_1 \in [3,\infty)$, and $q \in (0,1)$ be large enough that the kernel $J$ defined by 
	\begin{align*}
	J(e) = \begin{cases}
	\frac{(\theta(\lambda_c)-2\eps)^2 \beta^\star L \log\log(|e|) }{|e|^{2}} & \text{ for } |e| \geq N_1 \\
	N_0 & \text{ for } |e| < N_1
	\end{cases}
	\end{align*}
	satisfies $\p_{q,J} \left( |K_0| = \infty \right)> 0$. (The requirement that $N_1 \geq 3$ is only necessary so that $\log\log(|e|) > 0$ for all edges $e$ with $|e|\geq N_1$).
    Such $N_0,N_1\in \N$ and $q\in (0,1)$ exist by \Cref{theo:existence mixed}.
	By Lemma \ref{lem:cluster size restricted}, the probability $\p_{\lambda_c} \left( |K_{\max}(\Lambda_n)| \geq (\theta(\lambda_c)-\eps) |\Lambda_n| \right)$ tends to $1$ as $n \to \infty$. 
    We now define a renormalized percolation configuration by ``contracting'' $n$-blocks $\Lambda_n(u)$ into single points. More precisely, 
    we say that an $n$-block $\Lambda_n(u)$ is  \textbf{good} if there exits an open cluster of density at least $(\theta(\lambda_c)-\eps)$ inside $\Lambda_n(u)$. For each good $n$-block, we pick the largest cluster inside this block, breaking ties by some deterministic rule that depends only on the configuration inside the block. 
    We say that two good $n$-blocks are \textbf{connected} if there exists an open edge between the largest clusters in the two blocks. By taking $n$ large enough, we can ensure that $\p_{\lambda_c} \left(\Lambda_n \text{ good}\right) > q$ and that 
	\begin{multline*}
		\p_{\lambda_c} \left(\Lambda_n(u) \text{ and } \Lambda_n(v) \text{ connected} \mid \Lambda_n(u), \Lambda_n(v) \text{ good}\right)
        \\
		\geq
		1-\exp\left(-(\theta(\lambda_c)-\eps)^2 L^{2n} \frac{ \beta^\star L \log\log(\|u-v\|)}{\|u-v\|^2}\right) 
	\end{multline*}
     for all good $n$-blocks $\Lambda_n(u)$ and $\Lambda_n(v)$.
    If we take $n$ large enough, then the expression inside the exponential can be bounded from below by
	\begin{multline}\label{two cases}
	(\theta(\lambda_c)-\eps)^2 L^{2n} \frac{ \beta^\star L \log\log(\|u-v\|)}{\|u-v\|^2} \\ \geq \begin{cases}
	(\theta(\lambda_c)-\eps)^2  \frac{ \beta^\star L \log\log(L^k)}{L^{2k}} & \text{ if } \|u-v\|> N_1 L^n\\
	2 N_0 & \text{ if } \|u-v\| \leq N_1 L^n
	\end{cases}
	\end{multline}
     for all vertices $u,v$ with $\|u-v\|=L^{k+n}$.
	As the family of kernels $\cJ$ is continuous (and $\eps \leq \theta(\lambda_c)/10)$, we can decrease $\lambda$ slightly from $\lambda=\lambda_c$ to $\lambda < \lambda_c$ while still maintaining that $\p_{\lambda} \left(\Lambda_n \text{ good}\right) > q$ and 
	\begin{multline*}
	\p_{\lambda} \left(\Lambda_n(u) \text{ and } \Lambda_n(v) \text{ connected } \mid \Lambda_n(u), \Lambda_n(v) \text{ good}\right)
    \\
	\geq
	1-\exp\left(-(\theta(\lambda_c)-2\eps)^2 L^{2n} \frac{ \beta^\star L \log\log(\|u-v\|)}{\|u-v\|^2}\right) 
	\end{multline*}
for all good $n$-blocks $\Lambda_n(u)$ and $\Lambda_n(v)$.
Since $\left( \theta(\lambda_c) -2\eps\right)^2 \geq \frac{1}{2} \left( \theta(\lambda_c) - \eps\right)^2$, this implies together with \eqref{two cases} that for all $u,v$ with $\|u-v\|= L^{k+n}$
	\begin{multline*}
	\p_{\lambda} \left(\Lambda_n(u) \text{ and } \Lambda_n(v) \text{ connected } \mid \Lambda_n(u), \Lambda_n(v) \text{ good}\right)
    \\ 
    \geq 
    \begin{cases}
	1-\exp \left( - (\theta(\lambda_c)-2\eps)^2  \frac{ \beta^\star L \log\log(L^k)}{L^{2k}} \right) & \text{ if } \|u-v\|> N_1 L^n\\
	1- \exp \left(-N_0 \right) & \text{ if } \|u-v\| \leq N_1 L^n
	\end{cases}.
	\end{multline*}
	So, in particular, under the measure $\p_{\lambda}$, the graph that results from contracting each $n$-block of the form $\Lambda_n(u)$ and connecting the largest clusters of the good $n$-blocks stochastically dominates the random graph sampled from $\p_{q,J}$. This implies that $\p_{\lambda}\left(|K_0|=\infty\right) > 0$, which contradicts the assumption that $\lambda< \lambda_c$.
\end{proof}

\subsection*{Acknowledgements}
TH is supported by NSF grant DMS-1928930 and a Packard Fellowship for Science and
Engineering.

\footnotesize{
\bibliographystyle{abbrv}
\bibliography{unimodularthesis}

\begin{thebibliography}{10}

\bibitem{abdesselam2013rigorous}
A.~Abdesselam, A.~Chandra, and G.~Guadagni.
\newblock Rigorous quantum field theory functional integrals over the p-adics
  {I}: anomalous dimensions.
\newblock {\em arXiv preprint arXiv:1302.5971}, 2013.

\bibitem{MR868738}
M.~Aizenman and C.~M. Newman.
\newblock Discontinuity of the percolation density in one-dimensional {$1/|x-
  y|^2$} percolation models.
\newblock {\em Comm. Math. Phys.}, 107(4):611--647, 1986.

\bibitem{baker1972ising}
G.~A. Baker~Jr.
\newblock Ising model with a scaling interaction.
\newblock {\em Physical Review B}, 5(7):2622, 1972.

\bibitem{MR3969983}
R.~Bauerschmidt, D.~C. Brydges, and G.~Slade.
\newblock {\em Introduction to a renormalisation group method}, volume 2242 of
  {\em Lecture Notes in Mathematics}.
\newblock Springer, Singapore, 2019.

\bibitem{baumler2023distances}
J.~B{\"a}umler.
\newblock Distances in $1/\|x-y\|^{2d}$ percolation models for all dimensions.
\newblock {\em Communications in Mathematical Physics}, 404(3):1495--1570,
  2023.

\bibitem{baumler2022isoperimetric}
J.~B{\"a}umler and N.~Berger.
\newblock Isoperimetric lower bounds for critical exponents for long-range
  percolation.
\newblock {\em Annales de l'Institut Henri Poincare (B) Probabilites et
  statistiques}, 60(1):721--730, 2024.

\bibitem{MR1896880}
N.~Berger.
\newblock Transience, recurrence and critical behavior for long-range
  percolation.
\newblock {\em Comm. Math. Phys.}, 226(3):531--558, 2002.

\bibitem{bleher2010critical}
P.~Bleher.
\newblock Critical phenomena in the {D}yson hierarchical model and
  renormalization group.
\newblock {\em arXiv preprint arXiv:1010.5855}, 2010.

\bibitem{bollobas2007phase}
B.~Bollob{\'a}s, S.~Janson, and O.~Riordan.
\newblock The phase transition in inhomogeneous random graphs.
\newblock {\em Random Structures \& Algorithms}, 31(1):3--122, 2007.

\bibitem{MR3035740}
D.~A. Dawson and L.~G. Gorostiza.
\newblock Percolation in an ultrametric space.
\newblock {\em Electron. J. Probab.}, 18:no. 12, 26, 2013.

\bibitem{MR3769822}
D.~A. Dawson and L.~G. Gorostiza.
\newblock Transience and recurrence of random walks on percolation clusters in
  an ultrametric space.
\newblock {\em J. Theoret. Probab.}, 31(1):494--526, 2018.

\bibitem{ding2023uniqueness}
J.~Ding, Z.~Fan, and L.-J. Huang.
\newblock Uniqueness of the critical long-range percolation metrics.
\newblock {\em arXiv preprint arXiv:2308.00621}, 2023.

\bibitem{ding2025uniqueness}
J.~Ding, Z.~Fan, and L.-J. Huang.
\newblock Uniqueness and dimension for the geodesic of the critical long-range
  percolation metric.
\newblock {\em arXiv preprint arXiv:2506.10511}, 2025.

\bibitem{duminil2020long}
H.~Duminil-Copin, C.~Garban, and V.~Tassion.
\newblock {Long-range models in 1D revisited}.
\newblock {\em Annales de l'Institut Henri Poincaré, Probabilités et
  Statistiques}, 60(1):232 -- 241, 2024.

\bibitem{MR436850}
F.~J. Dyson.
\newblock Existence of a phase-transition in a one-dimensional {I}sing
  ferromagnet.
\newblock {\em Comm. Math. Phys.}, 12(2):91--107, 1969.

\bibitem{easo2024double}
P.~Easo, T.~Hutchcroft, and J.~Kurrek.
\newblock Double-exponential susceptibility growth in {D}yson’s hierarchical
  model with $|x-y|^{-2}$ interaction.
\newblock {\em Journal of Mathematical Physics}, 65(2), 2024.

\bibitem{hutchcroft2020power}
T.~Hutchcroft.
\newblock Power-law bounds for critical long-range percolation below the
  upper-critical dimension.
\newblock {\em Probab. Theory Related Fields}, 181(1-3):533--570, 2021.

\bibitem{hutchcroft2022sharp}
T.~Hutchcroft.
\newblock Sharp hierarchical upper bounds on the critical two-point function
  for long-range percolation on $\mathbb{Z}^d$.
\newblock {\em Journal of Mathematical Physics}, 63(11), 2022.

\bibitem{hutchcrofthierarchical}
T.~Hutchcroft.
\newblock The critical two-point function for long-range percolation on the
  hierarchical lattice.
\newblock {\em The Annals of Applied Probability}, 34(1B):986--1002, 2024.

\bibitem{hutchcroft2022critical}
T.~Hutchcroft.
\newblock Critical cluster volumes in hierarchical percolation.
\newblock {\em Proceedings of the London Mathematical Society}, 130(1):e70023,
  2025.

\bibitem{LRPpaper1}
T.~Hutchcroft.
\newblock Critical long-range percolation {I}: High effective dimension.
\newblock {\em arXiv preprint arXiv:2508.18807}, 2025.

\bibitem{LRPpaper2}
T.~Hutchcroft.
\newblock Critical long-range percolation {II}: Low effective dimension.
\newblock {\em arXiv preprint arXiv:2508.18808}, 2025.

\bibitem{LRPpaper3}
T.~Hutchcroft.
\newblock Critical long-range percolation {III}: The upper critical dimension.
\newblock {\em arXiv preprint arXiv:2508.18809}, 2025.

\bibitem{hutchcroft2025dimension}
T.~Hutchcroft.
\newblock Dimension dependence of critical phenomena in long-range percolation.
\newblock {\em arXiv preprint arXiv:2510.03951}, 2025.

\bibitem{imbrie1988intermediate}
J.~Z. Imbrie and C.~M. Newman.
\newblock An intermediate phase with slow decay of correlations in one
  dimensional $1/| x- y|^2$ percolation, {I}sing and {P}otts models.
\newblock {\em Communications in mathematical physics}, 118(2):303--336, 1988.

\bibitem{MR2955049}
V.~Koval, R.~Meester, and P.~Trapman.
\newblock Long-range percolation on the hierarchical lattice.
\newblock {\em Electron. J. Probab.}, 17:no. 57, 21, 2012.

\bibitem{lindenstrauss1999pointwise}
E.~Lindenstrauss.
\newblock Pointwise theorems for amenable groups.
\newblock {\em Electronic Research Announcements of the American Mathematical
  Society}, 5(12):82--90, 1999.

\bibitem{newman1986one}
C.~M. Newman and L.~S. Schulman.
\newblock One dimensional $1/|j- i|^s$ percolation models: The existence of a
  transition for $s \leq 2$.
\newblock {\em Communications in Mathematical Physics}, 104(4):547--571, 1986.

\end{thebibliography}
}

\end{document}